\documentclass[11pt,a4paper]{article}

\usepackage[left=2.3cm, top=2.3cm,bottom=2.3cm,right=2.3cm]{geometry}

\usepackage{mathtools,amsmath,amssymb,amsthm,mathrsfs,calc,graphicx,stmaryrd,xcolor,dsfont,tikz,pgfplots,bbm,float,array,epsfig,hyperref,color}
\usetikzlibrary{calc}
\usepackage[numbers,square]{natbib}
\usepackage[british]{babel}
\usepackage{amsfonts}              
\usepackage{enumitem}
\numberwithin{equation}{section}
\numberwithin{figure}{section}

\allowdisplaybreaks[4]

\def\sP{\mathscr{P}}
\def\sS{\mathcal{S}}

\newtheorem {theorem}{Theorem}[section]
\newtheorem {proposition}[theorem]{Proposition}
\newtheorem {lemma}[theorem]{Lemma}
\newtheorem {corollary}[theorem]{Corollary}

{\theoremstyle{definition}
\newtheorem{definition}[theorem]{Definition}

\newtheorem*{convention*}{Convention}

\newtheorem {example}[theorem]{Example}
\newtheorem {remark}[theorem]{Remark}
}

{
\theoremstyle{remark}
}

\newcommand{\Vol}{\operatorname{Vol}}

\renewcommand{\Re}{\operatorname{Re}}  
\renewcommand{\Im}{\operatorname{Im}}  

\newcommand{\ii}{{\rm{i}}}

\def\BB{\mathbb{B}}
\def\CC{\mathbb{C}}
\def\E{\mathbb{E}}

\def\N{\mathbb{N}}
\def\P{\mathbb{P}}

\def\R{\mathbb{R}}
\def\RRd1{\mathbb{R}^{d+1}}

\def\cI{\mathcal{I}}

\def\sG{\mathscr{G}}

\newcommand{\eee}{{\rm e}}

\newcommand{\ind}{\mathbbm{1}}
\newcommand{\eps}{\varepsilon}

\newcommand{\dd}{{\rm d}}
\newcommand{\bsl}{\backslash}

\begin{document}

\title{\bfseries Expected Hyperbolic Volumes of Random Beta Polytopes}

\author{Zakhar Kabluchko and Philipp Schange}

\date{}

\maketitle

\begin{abstract}
Let $X_1,\ldots,X_n$ be independent random points in the closed unit ball of $\mathbb{R}^d$. Assume that each $X_i$ has a beta distribution with parameter $\beta_i \ge -1$: if $\beta_i>-1$, then $X_i$ has Lebesgue density proportional to $(1-\|x\|^2)^{\beta_i}$ on $\{\|x\|<1\}$, whereas the case $\beta_i=-1$ corresponds to the uniform distribution on the unit sphere $\{\|x\|=1\}$. Let $[X_1,\ldots,X_n]$ denote the convex hull of these points. Interpreting the unit ball as the Klein model of hyperbolic geometry, we derive closed-form formulas for the expected hyperbolic volume of the random hyperbolic polytope $[X_1,\ldots,X_n]$. As a special case, if $X_1,\ldots,X_n$ are independent and uniformly distributed on the unit sphere in $\mathbb{R}^3$, then for every $n\ge 4$,
\[
\mathbb{E}\,\operatorname{Vol}_{3}^{\mathrm{hyp}}\!\bigl([X_1,\ldots,X_n]\bigr)
=
\pi\left(\frac{n}{2}-\sum_{j=1}^{n-1}\frac{1}{j}\right).
\]
\end{abstract}

\bigskip
\noindent
\textbf{Keywords}. Hyperbolic geometry, Lobachevsky geometry, stochastic geometry, geometric probability, random polytope, random simplex, ideal polytope, ideal simplex,  hyperbolic volume, analytic continuation, beta distribution, random beta polytope.

\bigskip
\noindent
\textbf{MSC 2020}. Primary: 52A22, 60D05; Secondary: 26B15, 30B40, 52B11, 52A38, 52A55.  

\tableofcontents

\section{Introduction and selected  results}

\subsection{The Klein model}

In the Klein (or Beltrami--Klein) model of hyperbolic geometry, the underlying space is the open unit ball
$
\mathbb{B}^d := \{x \in \mathbb{R}^d : \|x\| < 1\},
$
equipped with the Riemannian metric
\[
\mathrm{d}s^2
=
\frac{(1-\|x\|^2)\,\|\mathrm{d}x\|^2 + \langle x,\mathrm{d}x\rangle^2}{(1-\|x\|^2)^2}.
\]
This metric has constant sectional curvature $-1$. Here, $\langle \cdot,\cdot\rangle$ denotes the standard Euclidean inner product on $\mathbb{R}^d$ with $d\geq 2$, and $\|\cdot\|$ the corresponding Euclidean norm. The ideal boundary (also called the absolute) is the unit sphere
$
\partial \mathbb{B}^d = \{x \in \mathbb{R}^d : \|x\| = 1\}.
$
We also write
$
\overline{\mathbb{B}}^d := \{x \in \mathbb{R}^d : \|x\| \le 1\}
$
for the closed unit ball.
The  hyperbolic volume of a Borel set $A \subseteq \overline{\mathbb{B}}^d$ is given by
\begin{equation}\label{eq:def_hyperbolic_volume}
\Vol_d^{\mathrm{hyp}}(A)
=
\int_A \frac{\dd x}{(1-\|x\|^2)^{(d+1)/2}}\in [0,+\infty].
\end{equation}
For further background on hyperbolic geometry, we refer to the book of \citet{ratcliffe_book}.

A convenient feature of the Klein model is that totally geodesic hyperbolic subspaces are represented by intersections of ordinary affine subspaces of $\mathbb{R}^d$ with $\mathbb{B}^d$. In particular, hyperbolic segments are precisely the Euclidean line segments contained in $\mathbb{B}^d$. As a consequence, hyperbolic convexity coincides with ordinary Euclidean convexity in the Klein model. In particular, a hyperbolic polytope is an ordinary convex polytope
$P = [v_1,\ldots,v_n]$
 contained in $\overline{\mathbb{B}}^d$, where $v_1,\ldots,v_n \in \overline{\mathbb{B}}^d$ and
$$
[v_1,\ldots, v_n]:= \{\lambda_1 v_1 + \ldots + \lambda_n v_n: \, \lambda_1\geq 0,\ldots, \lambda_n \geq 0, \lambda_1+\ldots+\lambda_n = 1\}
$$
denotes the convex hull. We refer to~\cite{Brondsted1983} and~\cite{Grunbaum2003} for background information on convex polytopes.  Some, or even all, vertices of the polytope $P$ may lie on the unit sphere $\partial \mathbb{B}^d$; such vertices are called \emph{ideal}. If all vertices of a hyperbolic polytope lie on $\partial \mathbb{B}^d$, then the polytope is called \emph{ideal}. The hyperbolic volume of a polytope $P$, denoted by $\Vol_d^{\mathrm{hyp}}(P)$, is defined by~\eqref{eq:def_hyperbolic_volume}. It is well known---and we shall sketch the argument below---that this volume is always finite, even if some or all vertices of $P$ are ideal.

Computing hyperbolic volumes of polytopes in dimensions greater than $2$ is, in general, a difficult problem going back to Lobachevsky, Bolyai, and Schl\"afli; see, for example, the papers of~\citet{milnor_hyperbolic_volumes,milnor_schlafli}, \citet{vinberg_volumes_polyhedra_russ_math_surv}, \citet{abrosimov_mednykh_volumes_polytopes_spaces_const_curv,abrosimov_mednykh}, and \citet{kellerhals_dilogs_vol_hyperbolic_polytopes}, as well as the books by~\citet[Chapter~7]{alekseevski_vinberg_solodovnikov_book} and~\citet[Chapters~10--11]{ratcliffe_book}. As one example, a closed formula for the hyperbolic volume of regular hyperbolic simplices can be found in~\cite{KabluchkoSchange2025Hyperbolic}.

\subsection{Selected results}
The aim of the present paper is to derive  closed-form expressions for the expected hyperbolic volume of random beta polytopes. This will be done in Theorems~\ref{theo:expected_hyp_volume_beta_polytopes_odd_d} and~\ref{theo:expected_hyp_volume_beta_polytopes_even_d} below. Since stating these results in full generality requires somewhat cumbersome notation, we begin with two special cases for which particularly simple formulas are available.

Our first result concerns random ideal polytopes in dimension $3$, whose vertices are independent and uniformly distributed on the unit sphere in $\mathbb{R}^3$.

\begin{theorem}[Expected hyperbolic volume of a random ideal polytope in dimension $3$]\label{theo:intro_dim_3_ideal_polytopes}
For $n \ge 4$, let $X_1, \ldots, X_n$ be independent random points uniformly distributed on the unit sphere $\partial \mathbb{B}^3 \subset \mathbb{R}^3$. Then the expected hyperbolic volume of the random polytope $\sP := [X_1,\ldots,X_n]$ is given by
\begin{align}\label{eq:exp_hyp_vol_random_ideal_polytope_dim_3_intro}
\E \Vol_3^{\mathrm{hyp}}(\sP)
=
\pi \left( \frac{n}{2} - \sum_{j=1}^{n-1} \frac{1}{j} \right).
\end{align}
\end{theorem}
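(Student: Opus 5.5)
The plan is to decompose the hyperbolic volume of $\sP$ into signed cones over its facets with apex at the origin. This reduces the problem to a single random ideal triangle. Almost surely the points are in general position and $\sP$ is a simplicial polytope, all of whose vertices lie among the $X_i$. For every polytope $P\subset \overline{\BB}^3$ with $0\notin\partial P$ we have
\[
\Vol_3^{\mathrm{hyp}}(P)=\sum_{F\text{ facet}} \varepsilon_F\, \Vol_3^{\mathrm{hyp}}([0,F]),
\]
where $\varepsilon_F=+1$ if $0$ and $P$ lie on the same side of the affine hull of $F$, and $\varepsilon_F=-1$ otherwise. This is the usual signed cone decomposition; it is legitimate here because the hyperbolic volume is the integral of a fixed density. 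Each $[0,X_i,X_j,X_k]$ is a hyperbolic tetrahedron with one finite vertex and three ideal vertices, so its volume is finite.

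By exchangeability, $\E\Vol_3^{\mathrm{hyp}}(\sP)$ equals $\binom n3\,\E\bigl[V\,\Sigma\bigr]$, where $V:=\Vol_3^{\mathrm{hyp}}([0,X_1,X_2,X_3])$ and $\Sigma$ is the signed facet indicator of $[X_1,X_2,X_3]$. Let $H$ be the plane through $X_1,X_2,X_3$ and $h\in[0,1)$ its distance from $0$. By Archimedes' theorem, the small cap cut off by $H$ has normalized area exactly $p=(1-h)/2$. Conditionally on $X_1,X_2,X_3$, the triangle is a facet if the remaining $n-3$ points all lie in the small cap or all in the large cap. In the first case $0$ and $\sP$ lie on opposite sides of $H$, giving sign $-1$; in the second they lie on the same side, giving sign $+1$. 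Hence
\[
\E\Vol_3^{\mathrm{hyp}}(\sP)=\binom n3\,\E\Bigl[V\bigl((1-p)^{n-3}-p^{n-3}\bigr)\Bigr]=\binom n3\int_0^1 m(h)\Bigl(\bigl(\tfrac{1+h}2\bigr)^{n-3}-\bigl(\tfrac{1-h}2\bigr)^{n-3}\Bigr)\,f(h)\,\dd h,
\]
where $f$ is the density of $h$ and $m(h):=\E[V\mid h]$.

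Two ingredients then remain. The first is the density $f$, which is given by a spherical Blaschke--Petkantschin formula for three uniform points on $S^2$. Given $h$, the three points are distributed on the circle $H\cap S^2$ of radius $\sqrt{1-h^2}$ with density proportional to the area of the planar triangle. The joint density of $(h,\text{triangle})$ is proportional to $\Delta\cdot(\text{Euclidean area})$, and one obtains $f(h)$ as a polynomial in $h$. The second ingredient, which is the main obstacle, is the conditional mean $m(h)$ of the hyperbolic volume of the cone $[0,X_1,X_2,X_3]$.

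For $m(h)$ I would not compute individual tetrahedron volumes via Lobachevsky functions. Instead I would integrate directly, $V=\int_{[0,T]}(1-\|x\|^2)^{-2}\,\dd x$, and write the cone in coordinates $x=s y$ with $y\in T$ and $s\in[0,1]$. The radial integral is elementary: $\int_0^1 s^2(1-s^2\|y\|^2)^{-2}\,\dd s$ is an explicit arctanh-type function of $\|y\|$. Averaging over the random triangle $T$ inscribed in the circle, with the area-biased law, turns $m(h)f(h)$ into an integral over the disc $H\cap\BB^3$ of $P(y\in T)$ times an explicit radial weight; by rotation invariance $P(y\in T)$ depends only on $\|y\|$.

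I expect that, after integrating in $h$, everything collapses to integrals of the form $\int_0^1 \bigl(\tfrac{1\pm h}{2}\bigr)^{n-3}\,\mathrm{poly}(h)\,\dd h$, possibly together with one logarithmic term. These produce $\frac n2$ together with the harmonic sum $\sum_{j=1}^{n-1}1/j$ after a partial-fraction telescoping. As consistency checks I would verify $n=4$ and the asymptotics. For $n=4$ the formula gives $\pi(2-\tfrac{11}{6})=\pi/6$, which is the expected volume of a random ideal tetrahedron. As $n\to\infty$ the expected volume grows like $\pi n/2$, which should match a $2\pi\cdot$(number of facets)$/(\ldots)$ Gauss--Bonnet-type heuristic.

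If the direct evaluation of $m(h)$ proves intractable, the fallback is a different route. Compute $\E\Vol$ for beta-distributed points via $\int_{\BB^3}\P(x\in\sP)(1-\|x\|^2)^{-2}\,\dd x$, where the absorption probability is accessible for beta points by their projection-invariance. Then pass to $\beta\downarrow-1$ by analytic continuation.
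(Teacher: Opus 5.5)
\textbf{Assessment: genuine gap.} Your reduction is correct and would give a genuinely different route from the paper's. The signed cone decomposition, the exchangeability step, Archimedes' cap probability $p=(1-h)/2$ and the sign bookkeeping are all fine. The Blaschke--Petkantschin step gives $f(h)=\frac32(1-h^2)$ on $[0,1]$. The problem is that the argument stops exactly where the difficulty begins. Everything hinges on $g(h):=m(h)f(h)$, and you give no computation of it, only the expectation that it ``collapses''.

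This step is not routine. The formula must hold for every $n\ge 4$, so the integrals $\int_0^1 g(h)\bigl[(\frac{1+h}{2})^{n-3}-(\frac{1-h}{2})^{n-3}\bigr]\,\dd h$ determine $g$ (a Hausdorff moment problem in $u=\frac{1+h}{2}$). Working backwards from the theorem, what you would have to prove is
\[
m(h)f(h)=\frac{3\pi}{2}\Bigl[\phi\bigl(\tfrac{1-h}{2}\bigr)-\phi\bigl(\tfrac{1+h}{2}\bigr)\Bigr],\qquad \phi(x):=x^2\log x,\quad 0<h<1 .
\]
This target is consistent with $m(0^+)=0$. It is also consistent with $m(1^-)=\pi/4$, the mean of $\Lambda(\alpha)+\Lambda(\beta)+\Lambda(\gamma)$ over area-biased inscribed triangles, where $\Lambda$ is the Lobachevsky function.

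Your plan for deriving it needs two things you do not supply:
\begin{itemize}
\item the probability that a fixed point is covered by an area-biased random triangle inscribed in a circle, as an explicit function of its distance to the centre;
\item the double integral of that probability against the $\operatorname{artanh}$-type radial weight. Note the Jacobian factor $h$ in $\dd x=s^2h\,\dd s\,\dd A(y)$.
\end{itemize}
Your checks are not independent evidence either. The value $\pi/6$ for $n=4$ is itself a consequence of the theorem, not a known input.

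The paper avoids $m(h)$ altogether.
\begin{itemize}
\item It writes $\E\int_{\sP}(1-\|x\|^2)^{\beta}\,\dd x=c_{3,\beta}^{-1}\P[X_0\in\sP]$ with an auxiliary point $X_0\sim f_{3,\beta}$, $\beta>-1$.
\item It inserts the known absorption formula in terms of $a$ and $b$, which already admits vertices with $\beta_i=-1$.
\item It continues analytically in $\beta$ to $\Re\beta>-2$, using the uniform bound on volumes of hyperbolic simplices, and lets $\beta\downarrow-2$.
\item For $d=3$, $\beta=-2$ is a pole of $\Gamma(\beta+1)$ cancelled by zeros of the $a$-terms. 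So the derivatives $a_k'(k+1;1)=\frac{\pi}{k}(-1)^{k/2-1}$ appear. They are explicit because $F_1(\ii x)=1+\ii\sinh x$.
\item A Wilf--Zeilberger pair sums the resulting alternating series to $\frac n2-\sum_{j=1}^{n-1}\frac1j$.
\end{itemize}

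Your fallback points in this direction but continues in the wrong parameter. The vertices are already uniform on the sphere, so nothing needs to be done there. What falls outside the probabilistic range is the exponent $-2<-1$ of the weight, i.e.\ of the auxiliary point. Handling that continuation, including the removable singularity, is the actual content. Also, the absorption formula does not give $\P(x\in\sP)$ for a fixed $x$; it gives only its average against $f_{3,\beta}$.
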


For comparison, the analogous problem in dimension $d=2$ has a simple solution: the hyperbolic area of every ideal hyperbolic $n$-gon, that is, every Euclidean $n$-gon whose vertices lie on the unit circle, is equal to $(n-2)\pi$ by the Gauss--Bonnet formula. In higher dimensions, by contrast, no comparably simple general formulas for hyperbolic volumes are known.

In the next result we consider random ideal simplices in arbitrary dimension $d \ge 2$, obtained as the convex hull of $d+1$ independent random points uniformly distributed on $\partial \mathbb{B}^d$. Although the edge lengths of these simplices are infinite in the hyperbolic metric, their hyperbolic volume is finite and is bounded above by the volume of the \emph{regular} ideal simplex; see~\cite{haagerup_munkholm_simplices_max_vol_hyperbolic}.

\begin{theorem}[Expected hyperbolic volume of a random ideal simplex]\label{theo:intro_ideal_simplex_any_d}
Let $d \ge 2$ be an integer, and let $X_1,\ldots,X_{d+1}$ be independent random points uniformly distributed on the unit sphere $\partial \mathbb{B}^d \subset \mathbb{R}^d$. Consider the random ideal simplex $\sS := [X_1,\ldots,X_{d+1}]$.
\begin{itemize}
\item[(i)] If $d$ is \textbf{odd}, then
\[
\mathbb{E}\Vol_d^{\mathrm{hyp}}(\sS)
=
\frac{
2\pi^{\frac{d-1}{2}}
}{
\left(\frac{d-1}{2}\right)!
\binom{d^2-d-2}{\frac{d^2-d-2}{2}}
}
\int_0^1
t^{\frac{d-1}{2}}
\left(\sum_{j=0}^{\frac{d-3}{2}}
(-1)^j
\binom{d-2}{\frac{d-3}{2}-j}
\, t^j\right)^{d+1}
\, \mathrm{d}t.
\]
\item[(ii)] If $d$ is \textbf{even}, then
\[
\E \Vol_d^{\mathrm{hyp}}(\sS)
=
\frac{2^{1+d/2}}{\pi(d-1)!!}
\frac{\Gamma\left(\frac d2\right)^{d+1}}{\Gamma\left(\frac{d-1}2\right)^{d+1}}
\frac{\Gamma\left(\frac{d(d-1)}2\right)}{\Gamma\left(\frac{d(d-1)-1}2\right)}
\int_{-\infty}^{\infty}
\frac{\left(\int_0^t \sinh^{d-2}(u)\,\mathrm{d}u\right)^{d+1}}
{(\sinh t)^{d(d-1)-1}}\,\mathrm{d}t.
\]
\end{itemize}
\end{theorem}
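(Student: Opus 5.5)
The plan is to write the expected volume as a point-in-simplex probability for an auxiliary beta point, evaluate that probability by an affine Blaschke--Petkantschin computation for beta points, and then analytically continue in the auxiliary parameter. By \eqref{eq:def_hyperbolic_volume} and Fubini,
\[
\E\Vol_d^{\mathrm{hyp}}(\sS)=\int_{\BB^d}(1-\|x\|^2)^{-\frac{d+1}{2}}\,\P\bigl(x\in[X_1,\ldots,X_{d+1}]\bigr)\,\dd x .
\]
I will embed this in a one-parameter family. For $\alpha>-1$, let $Y$ be an independent point with beta density $c_{d,\alpha}(1-\|y\|^2)^{\alpha}$. Then
\[
I(\alpha):=\int_{\BB^d}(1-\|x\|^2)^{\alpha}\,\P(x\in\sS)\,\dd x=c_{d,\alpha}^{-1}\,\P(Y\in\sS).
\]
The target is $I(-\tfrac{d+1}{2})$. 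This point lies outside the range of probabilistic meaning, so the strategy is as follows. First, derive a closed formula for $\P(Y\in\sS)$ valid for $\alpha>-1$. Second, show that $I(\alpha)$ is analytic on a half-plane containing $-\tfrac{d+1}{2}$; this follows from finiteness of hyperbolic volumes of ideal simplices together with dominated convergence, since $\P(x\in\sS)$ decays near the sphere. Third, identify $I(-\tfrac{d+1}{2})$ with the analytic continuation of the closed formula.

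\emph{Computing $\P(Y\in\sS)$.} The $d+2$ points $X_1,\ldots,X_{d+1},Y$ are almost surely in general position in $\R^d$. Their convex hull is therefore either a simplex with exactly one point inside, or a polytope with all $d+2$ points as vertices. The event $\{Y\in\sS\}$ is thus determined by the combinatorial type of the hull, and I will express it through the expected number of facets. The facet count equals $\sum_{F}\ind\{\text{all other points lie on one side of } \operatorname{aff}F\}$, summed over $d$-subsets $F$. The affine Blaschke--Petkantschin formula, combined with the projection property of beta distributions, turns each term into a one-dimensional integral. This property says that the projection of a $\beta$-point onto a line is a one-dimensional beta point with parameter $\beta+\tfrac{d-1}{2}$. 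Each term then takes the form
\[
\int_{-1}^{1} (1-h^2)^{\gamma}\Bigl(F_1(h)^{a}F_2(h)^{b}+(1-F_1(h))^{a}(1-F_2(h))^{b}\Bigr)\,\dd h ,
\]
where $F_1$ is the projected distribution function of the sphere points (parameter $\tfrac{d-3}{2}$), $F_2$ is that of $Y$ (parameter $\alpha+\tfrac{d-1}{2}$), and the exponent $\gamma$ comes from the Jacobian, which depends on which $d$ points span the facet. Distinguishing facets that contain $Y$ from those that do not, and using the dichotomy above (a simplex has $d+1$ facets, the other type has a computable number), I obtain $\P(Y\in\sS)$ as an explicit combination of such integrals. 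The terms containing $Y$ carry a factor $(1-h^2)^{\alpha}$-type weight and are the ones to be continued.

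\emph{Continuation and evaluation.} At $\alpha=-\tfrac{d+1}{2}$ the parameter of $F_2$ becomes $-1$, the degenerate or ideal case, and the normalisation $c_{d,\alpha}^{-1}$ has a pole or zero that must cancel against the integrals. I expect this to be the main obstacle. To handle it, I will first integrate by parts in $h$, so that everything is expressed through $F_1$ and its density. I will then let $\alpha$ continue meromorphically and read off the finite value. After substituting $h=\tanh t$, one gets $\int_0^h(1-s^2)^{(d-3)/2}\dd s\propto\int_0^t\cosh^{-(d-1)}$, and the reciprocal variable naturally produces $\int_0^t\sinh^{d-2}(u)\,\dd u$ and the weight $(\sinh t)^{-(d(d-1)-1)}$ seen in part (ii). For odd $d$ the exponent $\tfrac{d-3}{2}$ is an integer, so $F_1$ is a polynomial in $h$. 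Changing variables to $t$ proportional to $1-h^2$ (or a half-angle substitution) then collapses everything to the polynomial integral in part (i), with the binomial constants coming from the beta normalisations. The real difficulty is the bookkeeping of the Gamma-factor constants at the continued parameter, and I will check them against $d=2$, where the answer must equal $\pi$.
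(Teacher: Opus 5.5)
\textbf{There is a genuine gap at the step where you obtain $\P(Y\in\sS)$ from expected facet numbers.} Let $s$ be the size of the Radon part containing $Y$ among the $d+2$ points. Then $\{Y\in\sS\}=\{s=1\}$. A point of $\partial\BB^d$ is an extreme point of the closed ball, so a.s.\ $s\in\{1,\ldots,d\}$.

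For $s\ge 2$ the hull is combinatorially the free sum $\Delta_{s-1}\oplus\Delta_{d+1-s}$. Its facets are the complements of pairs $\{p,q\}$ with $p,q$ in different Radon parts, so it has $s(d+2-s)$ facets. This is not a single number once $d\ge 4$; for $d=4$ it is $8$ or $9$.

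Splitting facets by whether they contain $Y$ does not help. Exactly $d+2-s$ facets avoid $Y$ and $(s-1)(d+2-s)$ contain it, and both are polynomials of degree at most $2$ in $s$. So any Blaschke--Petkantschin facet computation yields only $\E s$ and $\E s^2$. But for $d\ge 4$, $\ind\{s=1\}$ restricted to $\{1,\ldots,d\}$ is not a polynomial of degree $\le 2$: such a polynomial vanishing at the $d-1\ge 3$ points $2,\ldots,d$ is identically zero. Your reduction therefore closes only for $d=2,3$.

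The underlying problem is that $\{Y\in\sS\}$ is the \emph{intersection} of the $d+1$ events that $Y$ does not see the $i$-th facet of $\sS$. Its probability needs information about lower-dimensional faces, not just marginal facet probabilities. This is exactly what the absorption formula of Theorem~\ref{theo:beta_absorption_beta_poly} (imported from Kabluchko--Steigenberger) supplies. The paper takes that formula as its starting point, writes $c_{d,\beta}^{-1}\P(X_0\in\sS)$ through the $a$- and $b$-integrals, and continues analytically from there.

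\textbf{The evaluation at $\alpha=-\frac{d+1}{2}$ is also left open at the decisive points.}
\begin{itemize}
\item Your domination argument gives holomorphy of $I$ only on the open half-plane $\Re\alpha>-\frac{d+1}{2}$. The boundary value has to be reached by monotone convergence as $\alpha\downarrow-\frac{d+1}{2}$.
\item For odd $d$, $c_{d,\alpha}^{-1}$ has a pole there, coming from $\Gamma(\alpha+1)$. It is cancelled by a zero of $a_{d+1}(\,\cdot\,;d-2)$ at $d^2-d-1$, so the answer is the \emph{derivative} $a'_{d+1}(d^2-d-1;d-2)$. After the substitution $\tan\theta=\sinh u$ this produces a factor $\log\cos\theta$. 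The polynomial integral in (i) then comes from the Fourier coefficients of $\log\cos\theta$ (Lemma~\ref{lemma:poly_log_cos}), not from a change of variables in a polynomial integrand.
\item For even $d$ there is no pole. The $\sinh$-integral in (ii) arises from shifting the contour of $a_{d+1}(d(d-1)-1;d-2)$ by $\ii\pi/2$. This requires checking that the apparent singularity at $\ii\pi/2$ is removable and that the integrand decays at the ends of the strip. Your ``integration by parts and reciprocal variable'' does not point to any concrete step that would produce $(\sinh t)^{-(d(d-1)-1)}$ and $\int_0^t\sinh^{d-2}(u)\,\dd u$.
\end{itemize}
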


For example, if $X_1,\ldots,X_4$ are independent random points uniformly distributed on the unit sphere in $\mathbb{R}^3$, then the expected hyperbolic volume of the random ideal tetrahedron $[X_1,\ldots,X_4]$ is equal to $\pi/6$. This follows either from Theorem~\ref{theo:intro_dim_3_ideal_polytopes} with $n=4$ or from Theorem~\ref{theo:intro_ideal_simplex_any_d} with $d=3$.

\section{Beta polytopes and associated special functions}

The aim of the present paper is to derive explicit formulas for the expected hyperbolic volume of random beta polytopes. In this section, we define beta distributions and beta polytopes, and introduce several special functions associated with them.

\subsection{Beta distributions}

We say that a random point $X$ in $\mathbb{B}^d$ has a \emph{beta distribution} with parameter $\beta > -1$ if its Lebesgue density is given by
\begin{equation}\label{eq:beta_density}
f_{d,\beta}(x)
=
c_{d,\beta} \left(1-\|x\|^2\right)^{\beta},
\quad x \in \mathbb{B}^d,
\qquad \text{where} \qquad
c_{d,\beta}
=
\frac{\Gamma\!\left(\frac d2 + \beta + 1\right)}{\pi^{d/2}\Gamma(\beta+1)}.
\end{equation}
In this case, we write $X \sim f_{d,\beta}$.
By convention, if a random point $X$ is uniformly distributed on the unit sphere $\partial \mathbb{B}^d$, then we say that it has beta distribution with parameter $-1$ and write $X \sim f_{d,-1}$. With this convention, the family of beta distributions extends continuously to $\beta=-1$ in the weak sense: as $\beta \downarrow -1$, the beta distribution with density $f_{d,\beta}$ converges weakly to the uniform distribution on $\partial \mathbb{B}^d$; see, for example,~\cite[Example~2.5]{KabluchkoSteigenbergerThale2026}.

Let us stress that~\eqref{eq:beta_density} is the density of the beta distribution with respect to Lebesgue measure on $\mathbb{R}^d$, and that $\|x\|$ denotes the Euclidean norm of $x$ (equivalently, the Euclidean distance from $x$ to the origin). Although this will not be needed in the sequel, let us also mention an intrinsic hyperbolic description of the beta distribution. Recall that the hyperbolic distance from $x \in \mathbb{B}^d$ to the origin $0$ is given by
\[
d_{\mathrm{hyp}}(0,x) = \operatorname{artanh}(\|x\|) = \frac{1}{2}\log \frac{1+\|x\|}{1-\|x\|}.
\]
Then, for $\beta > -1$, the beta distribution is the probability distribution on the $d$-dimensional hyperbolic space $\mathbb{B}^d$ whose density with respect to hyperbolic volume is given by
\[
x \mapsto c_{d,\beta}\cosh^{-(2\beta+d+1)}\bigl(d_{\mathrm{hyp}}(0,x)\bigr),
\qquad x \in \mathbb{B}^d.
\]
In particular, this density depends only on the hyperbolic distance from $x$ to the origin.

\subsection{Beta polytopes and beta integrals}
Let $X_1,\ldots,X_n$ be independent random points in $\overline{\BB}^d$ with distributions
$X_i \sim f_{d,\beta_i}$, where $\beta_1,\ldots,\beta_n \ge -1$.
Consider the random \emph{beta polytope}
\[
\sP := \sP_{n,d}^{\beta_1,\ldots,\beta_n} := [X_1,\ldots,X_n],
\]
where $[\cdot]$ denotes the Euclidean convex hull, equivalently, the hyperbolic convex hull in the Klein model.
We allow some, or even all, of the parameters $\beta_i$ to be equal to $-1$; in this case, $\sP$ has ideal vertices. Results on beta polytopes, viewed as Euclidean polytopes, were surveyed in~\cite[Chapter~8]{KabluchkoSteigenbergerThale2026}. In particular, the expected Euclidean volume, and more generally all expected intrinsic volumes, were determined in~\cite{KabluchkoTemesvariThale2019}. The expected number of faces in every dimension was determined in~\cite{KabluchkoThaleZaporozhets2020} and~\cite{Kabluchko2021_angles_simplices_face_numbers} in the case where all parameters $\beta_i$ are equal, and subsequently in~\cite{kabluchko_steigenberger_beta_cones} in the general case.

The aim of the present paper is to derive a closed-form expression for $\E \Vol_d^{\mathrm{hyp}}(\sP)$, the expected \emph{hyperbolic} volume of $\sP$. More generally, we shall determine the expected beta integral of $\sP$. For $\beta \in \mathbb{C}$, the \emph{beta integral} of a Borel set $A \subset \overline{\mathbb{B}}^d$ is defined by
\[
\int_A (1-\|x\|^2)^\beta \,\dd x,
\]
provided that the integral converges absolutely. Note that $\beta=0$ yields the usual Euclidean volume, whereas the hyperbolic volume is recovered by taking $\beta = -\frac{d+1}{2}$.

In Theorems~\ref{theo:expected_beta_volume_beta_polytopes} and~\ref{theo:expected_hyp_volume_beta_polytopes_at_poles}, we shall derive closed-form expressions for the expected beta integral of the random beta polytope. The expected hyperbolic volume will then appear as a special case.

Let us describe the first step of our approach. To express the expected beta integral of $\sP$, we introduce an additional random point $X_0 \sim f_{d,\beta}$, independent of $X_1 \sim f_{d,\beta_1},\ldots,X_n \sim f_{d,\beta_n}$. For the moment, this requires $\beta > -1$ to be real. Then
$$
\E \left[\int_{\sP}(1-\|x\|^2)^\beta \,\dd x \right]
=
c_{d,\beta}^{-1}\,\P\bigl[X_0 \in [X_1,\ldots,X_n]\bigr].
$$
Indeed, this identity is an immediate consequence of Fubini's theorem and the definition of the density $f_{d,\beta}$.
For the probability on the right-hand side, there is a formula from~\cite[Theorem~6.8]{kabluchko_steigenberger_beta_cones}, expressed in terms of certain special functions denoted by $a$ and $b$, which will be introduced below; we shall recall this formula in Theorem~\ref{theo:beta_absorption_beta_poly}. The geometric argument leading to this formula applies only when $\beta > -1$, because otherwise the distribution of $X_0$ is not well defined. In the present paper, we extend the domain of validity of this formula to all $\beta \in \mathbb{C}$ satisfying $\Re \beta > -\frac{d+1}{2}$ by an application of the uniqueness theorem for analytic continuation. As it turns out, the resulting formula has removable singularities at $\beta=-1,-2,\ldots$, which leads to additional technical difficulties. Finally, the expected hyperbolic volume is recovered by letting $\beta \downarrow -\frac{d+1}{2}$.

\subsection{The functions \texorpdfstring{$a$}{a}, \texorpdfstring{$b$}{b}, and \texorpdfstring{$\Theta$}{Theta}}\label{sec:def_of_a_b}
Our formulas for expected hyperbolic volumes and beta integrals of beta polytopes will be expressed in terms of the special functions $a$, $b$, and $\Theta$, which we define in this section. These functions were introduced in~\cite{kabluchko_steigenberger_beta_cones}, where they were used to express several expected geometric functionals of beta polytopes and beta cones, such as the expected $f$-vector.

To begin with, the special case of the function $c_{d,\beta}$ defined in~\eqref{eq:beta_density} corresponding to $d=1$ will be denoted by
\begin{equation}\label{eq:c_and_c_tilde}
c_\beta
\coloneqq
c_{1,\beta}
=
\frac{\Gamma\!\left(\beta + \frac32\right)}{\sqrt{\pi}\,\Gamma(\beta+1)}.
\end{equation}
Note that both $c_{d,\beta}$ and $c_\beta$ are meromorphic functions of $\beta \in \mathbb{C}$. We shall need the identities
\begin{equation}\label{eq:c_beta_integrals}
\int_{-\pi/2}^{\pi/2}\cos^\beta(z)\,\mathrm{d}z
=
\frac{1}{c_{(\beta-1)/2}}
\quad (\Re\beta > -1),
\qquad
\int_{-\infty}^{\infty}\frac{1}{\cosh^\beta(z)}\,\mathrm{d}z
=
\frac{1}{c_{\beta/2-1}}
\quad (\Re\beta > 0).
\end{equation}

\begin{definition}\label{def:F}
For $\beta \in \mathbb{C}$ with $\Re \beta > -1$, we define
\begin{align}\label{eq:def_F_as_cos_integral}
F_\beta(x)
\coloneqq
\int_{-\pi/2}^{\,x} \cos^\beta(y)\,\mathrm{d}y,
\qquad
x \in \sG
:=
\mathbb{C}\setminus\bigl((-\infty,-\pi/2]\cup[\pi/2,+\infty)\bigr).
\end{align}
\end{definition}
Here and in what follows, $\cos^\beta z$ denotes the branch that is holomorphic on $\sG$ and agrees with the usual power of a positive real number  for $z \in (-\pi/2,\pi/2)$. If $\beta$ is not an integer, then the function $z \mapsto \cos^\beta z$ has branch points at $\pi/2+\pi m$, $m \in \mathbb{Z}$. The slit domain $\sG$ is simply connected and avoids all these branch points. The integral in~\eqref{eq:def_F_as_cos_integral} is understood as a contour integral along any path in $\sG$ from $-\pi/2$ to $x$. Since $\Re \beta > -1$, the singularity at $-\pi/2$ is integrable, and hence $F_\beta$ is a well-defined single-valued holomorphic function on $\sG$. Since $\Re \beta > -1$, the function $F_\beta$ admits finite limits at the points $\pm \pi/2$ along the real axis. We therefore extend the above definition by setting
\[
F_\beta\!\left(-\frac{\pi}{2}\right) \coloneqq 0,
\qquad
F_\beta\!\left(\frac{\pi}{2}\right) \coloneqq \int_{-\pi/2}^{\pi/2} \cos^\beta(y)\,\mathrm{d}y = \frac{1}{c_{(\beta-1)/2}}.
\]
Using~\eqref{eq:c_beta_integrals}, we obtain, for $x \in \mathbb{R}$,
\begin{equation}\label{eq:F_a_imaginary_argument}
F_\beta(\ii x)
=
\frac{1}{2c_{(\beta-1)/2}} + \int_0^{\ii x} \cos^\beta(z)\,\mathrm{d}z
=
\frac{1}{2c_{(\beta-1)/2}} + \ii \int_0^x \cosh^\beta(y)\,\mathrm{d}y.
\end{equation}

\begin{definition}[$a$- and $b$-quantities]\label{def:a_and_b_quantities}
Let $d \in \mathbb{N}_0$, and let $\alpha_1,\ldots,\alpha_d \ge 0$ be real numbers.
\begin{itemize}
\item[(a)] For $\alpha \in \mathbb{C}$ with $\Re \alpha > \alpha_1+\cdots+\alpha_d$, we define
\begin{equation}\label{eq:def_a}
a(\alpha;\alpha_1,\ldots,\alpha_d)
\coloneqq
\int_{-\infty}^{\infty} \cosh^{-\alpha}(x)\prod_{j=1}^d F_{\alpha_j}(\ii x)\,\mathrm{d}x.
\end{equation}

\item[(b)] For $\alpha \in \mathbb{C}$ with $\Re \alpha > -1$, we define
\begin{equation}\label{eq:def_b}
b(\alpha;\alpha_1,\ldots,\alpha_d)
\coloneqq
\int_{-\pi/2}^{\pi/2} \cos^\alpha(x)\prod_{j=1}^d F_{\alpha_j}(x)\,\mathrm{d}x.
\end{equation}
\end{itemize}
\end{definition}

We shall use the following shorthand notation in the special case where the parameters $\alpha_1,\ldots,\alpha_d$ are all equal:
\[
a_d(\alpha;\beta)
:=
a(\alpha;\underbrace{\beta,\ldots,\beta}_{d\text{ times}}),
\qquad
b_d(\alpha;\beta)
:=
b(\alpha;\underbrace{\beta,\ldots,\beta}_{d\text{ times}}),
\qquad
d \in \mathbb{N}_0.
\]

Recall that a \emph{multiset} is a collection in which repetitions are allowed. The number of elements of a multiset $\Lambda=\{\lambda_1,\ldots,\lambda_d\}$, counted with multiplicities, is denoted by $|\Lambda|=d$. For such a multiset, we write
\[
a(\alpha;\Lambda) := a(\alpha;\lambda_1,\ldots,\lambda_d),
\qquad
b(\alpha;\Lambda) := b(\alpha;\lambda_1,\ldots,\lambda_d).
\]

\begin{lemma}[$a$ and $b$ are holomorphic]\label{lem:a_b_analytic}
Let $d \in \mathbb{N}_0$, and let $\alpha_1,\ldots,\alpha_d \ge 0$ be real numbers. Then the integral defining $a(\alpha;\alpha_1,\ldots,\alpha_d)$ converges absolutely for $\Re \alpha > \alpha_1+\cdots+\alpha_d$, and the integral defining $b(\alpha;\alpha_1,\ldots,\alpha_d)$ converges absolutely for $\Re \alpha > -1$. In these domains, the functions $a(\,\cdot\,;\alpha_1,\ldots,\alpha_d)$ and $b(\,\cdot\,;\alpha_1,\ldots,\alpha_d)$ are holomorphic.
\end{lemma}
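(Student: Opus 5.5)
The plan is to bound the integrands uniformly on compact subsets of the respective half-planes and then use the standard theorem on holomorphy of parameter integrals. That theorem says: if $g(\alpha,x)$ is jointly measurable, holomorphic in $\alpha$ for each fixed $x$, and dominated by an integrable function of $x$ locally uniformly in $\alpha$, then $\int g(\alpha,x)\,\dd x$ is holomorphic. This follows from Morera's theorem together with Fubini, or from dominated convergence applied to difference quotients.

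\emph{The function $b$.} Each $\alpha_j\ge 0$ is real, so for $x\in(-\pi/2,\pi/2)$ we have $0\le F_{\alpha_j}(x)\le F_{\alpha_j}(\pi/2)=1/c_{(\alpha_j-1)/2}<\infty$. Hence the product $\prod_j F_{\alpha_j}(x)$ is bounded by a constant $C$. Moreover $|\cos^\alpha x|=\cos^{\Re\alpha}x$, because $\cos x>0$ there and we use the principal branch. Consequently, on a compact set where $\Re\alpha\in[\sigma_0,\sigma_1]$ with $\sigma_0>-1$, the integrand is dominated by $C\cos^{\sigma_0}x$, using $\cos x\le1$. This majorant is integrable because $\sigma_0>-1$. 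For each fixed $x$, the map $\alpha\mapsto \cos^\alpha x=\eee^{\alpha\log\cos x}$ is entire. This proves both absolute convergence and holomorphy of $b$.

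\emph{The function $a$.} By~\eqref{eq:F_a_imaginary_argument}, for real $x$ we have $F_{\alpha_j}(\ii x)=\frac{1}{2c_{(\alpha_j-1)/2}}+\ii\int_0^x\cosh^{\alpha_j}y\,\dd y$. The key step is the growth estimate $|F_{\alpha_j}(\ii x)|\le C_j\cosh^{\alpha_j}x$ for all real $x$. This holds because $\int_0^{|x|}\cosh^{\alpha_j}y\,\dd y\le \int_0^{|x|}\cosh y\cdot\cosh^{\alpha_j}y\,\dd y$ (as $\cosh y\ge1$), and the latter equals $\int_0^{\sinh|x|}(1+s^2)^{\alpha_j/2}\,\dd s\le \sinh|x|\,\cosh^{\alpha_j}x$. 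This bound is slightly too large by a factor $\sinh|x|$, so it must be refined. Since $\alpha_j\ge0$, the function $\cosh^{\alpha_j}$ is nondecreasing on $[0,\infty)$, so the crude bound $\int_0^{|x|}\cosh^{\alpha_j}y\,\dd y\le |x|\cosh^{\alpha_j}x$ is available. The resulting polynomial factor $|x|$ is harmless.

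Thus $\prod_j|F_{\alpha_j}(\ii x)|\le C(1+|x|)^d\cosh^{\alpha_1+\cdots+\alpha_d}x$. Since $|\cosh^{-\alpha}x|=\cosh^{-\Re\alpha}x$, the integrand of $a$ is dominated by
$$C(1+|x|)^d\cosh^{-(\sigma_0-\alpha_1-\cdots-\alpha_d)}x$$
on a compact set where $\Re\alpha\ge\sigma_0>\alpha_1+\cdots+\alpha_d$. This majorant decays like $(1+|x|)^d\eee^{-\delta|x|}$ with $\delta=\sigma_0-\sum\alpha_j>0$, and is therefore integrable on $\R$. Holomorphy of $\alpha\mapsto\cosh^{-\alpha}x$ for fixed $x$ is again clear, so the same theorem yields absolute convergence and holomorphy of $a$.

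The only mildly delicate point is the growth bound for $F_{\alpha_j}(\ii x)$. The monotonicity argument handles it cleanly, and the polynomial factor $(1+|x|)^d$ is exactly why a strict inequality $\Re\alpha>\sum\alpha_j$ is needed. Everything else is routine dominated convergence.
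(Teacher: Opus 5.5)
Your proof is correct and follows the paper's argument essentially verbatim. For $b$ you dominate by $M\cos^{\sigma}x$; for $a$ you dominate by $C(1+|x|)^d\cosh^{-\tau}x$, using the same bound $|F_{\beta}(\ii x)|\le C_\beta(1+|x|)\cosh^{\beta}x$; both are uniform on compact subsets, and you conclude with the theorem on holomorphic parameter-dependent integrals.

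In a write-up, drop the first asserted estimate $|F_{\alpha_j}(\ii x)|\le C_j\cosh^{\alpha_j}x$, which is false when $\alpha_j=0$ (then $F_0(\ii x)=\frac{\pi}{2}+\ii x$ grows like $|x|$). Also note that the strict inequality $\Re\alpha>\sum_j\alpha_j$ would be needed even without the polynomial factor $(1+|x|)^d$.
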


\begin{proof} To prove analyticity, we apply a standard theorem on holomorphic parameter-dependent integrals; see~\cite[Kapitel~IV, Satz~5.8]{elstrodt_book} or~\cite[Chapter~XV, \S1, Lemma~1.1 on p.~409]{lang_book_complex_analysis}.

\vspace*{2mm}
\noindent
\textit{Proof for the function $b$.} For $x\in[-\pi/2,\pi/2]$, each $F_{\alpha_j}(x)$ is continuous, hence bounded, so there exists a constant $M>0$ such that
\[
\left|\prod_{j=1}^d F_{\alpha_j}(x)\right|\le M,
\qquad x\in\left[-\frac\pi2,\frac\pi2\right].
\]
Now let $K$ be a compact subset of the half-plane $\{\Re\alpha>-1\}$, and put $\sigma:=\inf_{\alpha\in K}\Re\alpha>-1$.
Then, for every $x\in(-\pi/2,\pi/2)$,
\[
\sup_{\alpha\in K} \left|\cos^\alpha(x)\prod_{j=1}^d F_{\alpha_j}(x)\right|
\le
M\,\cos^\sigma(x).
\]
Since $\sigma>-1$, the function $x\mapsto \cos^\sigma(x)$ is integrable on $(-\pi/2,\pi/2)$. Therefore,  the integral in~\eqref{eq:def_b} converges absolutely for every $\Re\alpha>-1$. Next, for fixed $x\in(-\pi/2,\pi/2)$, the map
$
\alpha\mapsto \cos^\alpha(x)\prod_{j=1}^d F_{\alpha_j}(x)
$
is entire.  By the standard theorem on holomorphic parameter-dependent integrals, the function  $\alpha\mapsto b(\alpha;\alpha_1,\ldots,\alpha_d)$ is holomorphic on the half-plane $\{\Re\alpha>-1\}$.

\vspace*{2mm}
\noindent
\textit{Proof for the function $a$.}
By~\eqref{eq:F_a_imaginary_argument}, for every $\beta\ge 0$  there is a constant $C_\beta>0$ such that
\[
|F_\beta(\ii x)|
\le
\frac{1}{2c_{(\beta-1)/2}}+\int_0^{|x|}\cosh^\beta(y)\,\mathrm dy
\le
C_\beta(1+|x|)\cosh^\beta(x), \qquad x\in\mathbb R.
\]
Hence there exists a constant $C>0$ such that
\[
\left|\prod_{j=1}^d F_{\alpha_j}(\ii x)\right|
\le
C(1+|x|)^d\cosh^{\alpha_1+\cdots+\alpha_d}(x),
\qquad x\in\mathbb R.
\]
Let now $K$ be a compact subset of the half-plane
$
\{\Re\alpha>\alpha_1+\cdots+\alpha_d\},
$
and put
$
\tau:=\inf_{\alpha\in K}\bigl(\Re\alpha-(\alpha_1+\cdots+\alpha_d)\bigr)>0.
$
Then, for  every $x\in\mathbb R$,
\[
\sup_{\alpha\in K} \left|\cosh^{-\alpha}(x)\prod_{j=1}^d F_{\alpha_j}(\ii x)\right|
\le
C(1+|x|)^d\cosh^{-\tau}(x).
\]
Since $\tau>0$, the function on the right-hand side is integrable over $\mathbb R$. Hence the integral in~\eqref{eq:def_a} converges absolutely. Moreover, for fixed $x\in\mathbb R$, the map
$
\alpha\mapsto \cosh^{-\alpha}(x)\prod_{j=1}^d F_{\alpha_j}(\ii x)
$
is entire. By the theorem on holomorphic parameter-dependent integrals, the function
$
\alpha\mapsto a(\alpha;\alpha_1,\ldots,\alpha_d)
$
is holomorphic on the half-plane $\{\Re\alpha>\alpha_1+\cdots+\alpha_d\}$.
\end{proof}

\begin{remark}[$d=0,1$]\label{rem:a_b_for_d_equal_0_1}
For $d=0$ and $d=1$, the integrals in the definitions of the functions $a$ and $b$ can be evaluated explicitly; see~\cite[Examples~4.5 and~4.10]{kabluchko_steigenberger_beta_cones}. We record these formulas for later use.
For the identities involving $a$, assume that $\alpha \in \CC$ satisfies $\Re \alpha > 0$ in the case $d=0$, and that $\alpha_1\geq 0$ and $\alpha \in \CC$ satisfy $\Re \alpha > \alpha_1$ in the case $d=1$. Then
\begin{equation}\label{eq:a_for_d_0_and_1}
a(\alpha;\varnothing)
=
c_{\frac{\alpha-2}{2}}^{-1}
=
\frac{\sqrt{\pi}\,\Gamma\!\left(\frac{\alpha}{2}\right)}{\Gamma\!\left(\frac{\alpha+1}{2}\right)}
\qquad \text{and} \qquad
a(\alpha;\alpha_1)
=
\frac{1}{2c_{\frac{\alpha_1-1}{2}}c_{\frac{\alpha-2}{2}}}
=
\frac{\pi\,\Gamma\!\left(\frac{\alpha}{2}\right)\Gamma\!\left(\frac{\alpha_1+1}{2}\right)}
{2\,\Gamma\!\left(\frac{\alpha+1}{2}\right)\Gamma\!\left(\frac{\alpha_1+2}{2}\right)}.
\end{equation}
For the identities involving $b$, assume that $\alpha \in \CC$ satisfies $\Re \alpha > -1$ in the case $d=0$, and that $\alpha_1\geq 0$ and $\alpha \in \CC$ satisfy $\Re \alpha > -1$ in the case $d=1$. Then
\begin{equation}\label{eq:b_for_d_0_and_1}
b(\alpha;\varnothing)
=
c_{\frac{\alpha-1}{2}}^{-1}
=
\frac{\sqrt{\pi}\,\Gamma\!\left(\frac{\alpha+1}{2}\right)}{\Gamma\!\left(\frac{\alpha+2}{2}\right)}
\qquad \text{and} \qquad
b(\alpha;\alpha_1)
=
\frac{1}{2c_{\frac{\alpha_1-1}{2}}c_{\frac{\alpha-1}{2}}}
=
\frac{\pi\,\Gamma\!\left(\frac{\alpha+1}{2}\right)\Gamma\!\left(\frac{\alpha_1+1}{2}\right)}
{2\,\Gamma\!\left(\frac{\alpha+2}{2}\right)\Gamma\!\left(\frac{\alpha_1+2}{2}\right)}.
\end{equation}
\end{remark}

\begin{remark}\label{rem:a_b_alternative_formula}
In~\cite[Propositions~4.6 and~4.11]{kabluchko_steigenberger_beta_cones}, several alternative representations for the functions $a$ and $b$ are given. We shall need the following one: for $d \in \N_0$, $\alpha_1,\ldots,\alpha_d \ge 0$, and $\alpha > -1$, one has
\begin{equation}\label{eq:b:formula_1}
b(\alpha;\alpha_1,\dots,\alpha_d)
=
\int_{-1}^{1} (1-t^2)^{\frac{\alpha-1}{2}}
\prod_{j=1}^{d}
\left(
\frac{1}{2c_{(\alpha_j-1)/2}}
+
\int_0^t (1-s^2)^{\frac{\alpha_j-1}{2}}\,\dd s
\right)\dd t.
\end{equation}
\end{remark}

In the formulas for expected functionals of beta polytopes and beta cones derived in~\cite{kabluchko_steigenberger_beta_cones}, the functions $a$ and $b$ appear only through a certain combination. It is therefore convenient to introduce the following additional notation.

\begin{definition}[$\Theta$-function]\label{def:theta}
Let $Y=\{y_1,\ldots,y_{\ell}\}$ and $Z=\{z_1,\ldots,z_k\}$ be finite multisets of non-negative numbers, and let $x\in \CC$ satisfy $\Re x > -1/2$. We define
\begin{multline}
\Theta(x;Y;Z)
\coloneqq
\frac{1}{2\pi}
\prod_{\omega \in Y} c_{\omega-\frac12}
\prod_{\omega \in Z} c_{\omega-\frac12}
\\
\times
a\left(2x+2\sum_{\omega \in Y}\omega+2;2Y\right)
\cdot
\left(2x+2\sum_{\omega \in Y}\omega+1\right)
\cdot
b\left(2x+2\sum_{\omega \in Y}\omega;2Z\right).
\end{multline}
In sums and products over a multiset, multiplicities are taken into account.
\end{definition}

The following result is taken from~\cite[Theorem~6.8]{kabluchko_steigenberger_beta_cones}. It will serve as the starting point of our proofs.

\begin{theorem}[Absorption of a beta point by a beta polytope]\label{theo:beta_absorption_beta_poly}
Let $d \ge 2$ and $n \ge d+1$. Let $X_0 \sim f_{d,\beta}$ and $X_i \sim f_{d,\beta_i}$, $i=1,\ldots,n$, be independent random points, where $\beta \ge -1$ and $\beta_1,\ldots,\beta_n \ge -1$. Then
\begin{align*}
\P[X_0 \in [X_1,\ldots,X_n]]
&=
2 \sum_{\substack{I\subseteq \{1,\ldots,n\} \\ |I|=d+1,d+3,\ldots}}
\Theta\left(
\beta+\frac d2;
\left\{\beta_i+\frac d2 : i\in I\right\};
\left\{\beta_i+\frac d2 : i\in I^c\right\}
\right)
\\
&=
1 - 2 \sum_{\substack{I\subseteq \{1,\ldots,n\} \\ |I|=d-1,d-3,\ldots}}
\Theta\left(
\beta+\frac d2;
\left\{\beta_i+\frac d2 : i\in I\right\};
\left\{\beta_i+\frac d2 : i\in I^c\right\}
\right),
\end{align*}
where $I^c=\{1,\ldots,n\}\setminus I$.
\end{theorem}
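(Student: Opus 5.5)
This theorem is quoted from \cite[Theorem~6.8]{kabluchko_steigenberger_beta_cones}. My plan reconstructs its proof in the way that paper does: by conical tessellations and the canonical decomposition of beta polytopes.

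\emph{Step 1 (lift to a cone).} Embed everything into $\R^{d+1}$ via a beta-type lifting. The key property of beta distributions is that projections and restrictions stay in the beta family. If $X\sim f_{d,\beta}$, then its projection onto a $k$-dimensional subspace has law $f_{k,\beta+(d-k)/2}$. Conversely, a beta point in dimension $d$ arises as the projection of a beta point in dimension $d+1$ with parameter shifted by $-1/2$. This is why all parameters appear in the shifted form $\beta_i+\frac d2$.

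\emph{Step 2 (reduce absorption to a cone event).} I would express the event $X_0\notin[X_1,\ldots,X_n]$ via a Wendel/Cover--Efron type identity. Consider the cone $C$ spanned by the vectors $X_i-X_0$. Then $X_0\in[X_1,\ldots,X_n]$ exactly when $C=\R^d$. The indicator of $C\neq\R^d$ can be written, by the Euler characteristic or the Gauss--Bonnet relation for polyhedral cones, as an alternating sum over faces of $C$ of products of internal and external angles. Taking expectations, each face corresponds to a subset $I$, and the sum splits accordingly.

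\emph{Step 3 (compute one term).} For a fixed $I$, I would compute two quantities:
\begin{itemize}
\item the expected internal angle of the simplicial cone spanned by $\{X_i-X_0:i\in I\}$;
\item the probability that the remaining points $X_j$, $j\in I^c$, lie on one side of the affine hull of $\{X_0\}\cup\{X_i:i\in I\}$.
\end{itemize}
Projecting onto the orthogonal direction turns the second quantity into one-dimensional beta variables. By Definition~\ref{def:F}, the one-dimensional beta distribution function is $c_{(\alpha-1)/2}F_\alpha$, and the factor $\prod_{i\in I^c}F_{2(\beta_i+d/2)}$ comes out of this. The internal-angle factor is handled by a Blaschke--Petkantschin formula for affine hyperplanes through beta points. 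That formula produces a $\cos^{\alpha}$ weight, which gives $b(\cdot;2Z)$. Analytically continuing the tangent-direction integral to imaginary arguments produces $a(\cdot;2Y)$; this is where $F_\alpha(\ii x)$ from \eqref{eq:F_a_imaginary_argument} enters. The prefactor $(2x+2\sum\omega+1)$ and the $c$-constants come from the Jacobian and the normalisations.

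\emph{Step 4 (assemble).} Summing over $I$ gives the parity restriction $|I|=d+1,d+3,\ldots$, since alternate terms cancel in the conic Gauss--Bonnet relation. The factor $2$ reflects the two sides of a hyperplane. The second formula then follows from the complementary relation $\sum_I(-1)^{|I|}(\cdots)=0$, which is the analogue of the identity $\sum(-1)^k\upsilon_k=0$ for conic intrinsic volumes. The boundary cases $\beta=-1$ or $\beta_i=-1$ follow by weak continuity as $\beta\downarrow-1$, using continuity of $\Theta$ in its arguments.

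The main obstacle is Step 3. Identifying the expected internal angle of a beta simplicial cone with the $a$-integral requires the delicate analytic-continuation argument from \cite{Kabluchko2021_angles_simplices_face_numbers}, namely computing the internal angle via the external angle of a polar cone in a space of "imaginary" curvature.
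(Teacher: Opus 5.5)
The paper gives no proof of this statement; it quotes it from \cite[Theorem~6.8]{kabluchko_steigenberger_beta_cones}. Judged on its own, your reconstruction has a genuine gap in Steps~2 and~4.

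You apply conic Gauss--Bonnet to $C=\operatorname{pos}(X_i-X_0:1\le i\le n)\subset\mathbb{R}^d$. Almost surely $C$ is either $\mathbb{R}^d$ or pointed, and its proper faces are $F_I=\operatorname{pos}(X_i-X_0:i\in I)$ with $|I|\le d-1$. Gauss--Bonnet therefore gives
\[
\mathbb{P}\bigl[X_0\notin[X_1,\ldots,X_n]\bigr]=2\sum_{|I|=d-1,d-3,\ldots}\mathbb{E}\bigl[\alpha(F_I)\,\gamma(F_I,C)\bigr].
\]
This is the \emph{second} formula, not the first as your Step~4 asserts. A subset with $|I|\ge d+1$ indexes no face of any cone in $\mathbb{R}^d$, since $d+2$ or more points do not span a simplex there. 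Lifting by one dimension, as in your Step~1, does not change this, because the index set must reach $|I|=n$. So the terms of the first formula cannot arise in your framework.

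Your ``complementary relation'' is really the identity $\sum_{|I|\equiv d+1 \pmod{2}}\Theta(\cdots)=\frac12$ with $I$ running over \emph{all} subsets of $\{1,\ldots,n\}$. That identity is the missing statement, not a consequence of Gauss--Bonnet in $\mathbb{R}^d$.

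The missing idea is to lift the whole configuration to dimension $n$:
\begin{itemize}
\item Realise $(X_0,\ldots,X_n)$ as the projection of independent points $Y_0\sim f_{n,\beta-\frac{n-d}{2}}$ and $Y_i\sim f_{n,\beta_i-\frac{n-d}{2}}$.
\item By rotation invariance, take the target $d$-space $L$ to be uniform. Then $X_0\notin[X_1,\ldots,X_n]$ if and only if $L$ meets the polar of the tangent cone $T$ of $[Y_0,\ldots,Y_n]$ at $Y_0$ nontrivially.
\item Apply the conic Crofton formula and Gauss--Bonnet in $\mathbb{R}^n$. There $T$ has a face for every $I\subseteq\{1,\ldots,n\}$, and both formulas follow.
\item The invariance $\beta_i-\frac{n-d}{2}+\frac n2=\beta_i+\frac d2$ explains why the same $\Theta$-terms appear.
\item The lift needs all lifted parameters to be $\ge-1$, i.e.\ $\beta,\beta_i\ge\frac{n-d}{2}-1$. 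The general case then requires analytic continuation in $(\beta,\beta_1,\ldots,\beta_n)$.
\end{itemize}
Your plan contains none of these steps, and the lift in Step~1 is never used afterwards.

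Step~3 also needs repair, even for the terms $|I|\le d-1$ your approach can reach.
\begin{itemize}
\item ``The remaining points lie on one side of the affine hull'' only makes sense for $|I|=d-1$. In general, $\gamma(F_I,C)$ is the probability, over a uniform unit vector $u$ orthogonal to $\operatorname{span}\{X_i-X_0:i\in I\}$, that $\langle u,X_j\rangle\le\langle u,X_0\rangle$ for all $j\in I^c$.
\item Conditioning on $X_0$ and $X_i$, $i\in I$, turns this into a product of the one-dimensional distribution functions $c_{\gamma_j-1/2}F_{2\gamma_j}$. Blaschke--Petkantschin gives $\langle u,X_0\rangle$ a density proportional to $(1-t^2)^{\beta+\frac{d-1}{2}+\sum_{i\in I}\gamma_i}$.
\item Hence the whole factor $b\bigl(2\beta+d+\sum_{i\in I}2\gamma_i;\{2\gamma_i:i\in I^c\}\bigr)$ is the external part. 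Only the $a$-term encodes the internal angle at $X_0$, not $b$ as you wrote.
\item The factorisation $\mathbb{E}[\alpha(F_I)\gamma(F_I,C)]=\mathbb{E}[\alpha(F_I)]\,\mathbb{E}[\gamma(F_I,C)]$ must be stated and justified. It rests on the canonical decomposition: the rescaled shape of $[X_0,X_i:i\in I]$ inside its affine hull is independent of the position of that hull.
\item Identifying $\mathbb{E}[\alpha(F_I)]$ with the $a$-term is the core computation, and you defer it entirely to the literature.
\end{itemize}
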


\section{Main results}

\subsection{Expected beta integrals of random beta polytopes}

The next theorem provides a closed-form expression for the expected beta integral of a random beta polytope. Note that the range of $\beta$ in this theorem is much larger than just $\beta \geq -1$.

\begin{theorem}[Expected beta integral of a random beta polytope]\label{theo:expected_beta_volume_beta_polytopes}
Let $d \ge 2$ and $n \ge d+1$ be integers, and let $X_1,\ldots,X_n$ be independent random points with distributions $X_i \sim f_{d,\beta_i}$, where $\beta_1,\ldots,\beta_n \ge -1$. Put $\gamma_i \coloneqq \beta_i + \frac d2$ for $i=1,\ldots,n$, and consider the random beta polytope $\sP \coloneqq [X_1,\ldots,X_n]$. Then, for every $\beta \in \CC$ with $\Re \beta > -\frac{d+1}{2}$ and $\beta \notin \{-1,-2,\ldots\}$, the expected beta integral
\begin{equation}\label{eq:I_beta_def_beta_vol}
\cI(\beta)
\coloneqq
\E\left[\int_{\sP} (1-\|x\|^2)^\beta \,\dd x \right]
\end{equation}
admits the representations
\begin{align}
\cI(\beta)
&=
\frac{2\pi^{d/2}\Gamma(\beta+1)}{\Gamma\!\left(\frac d2+\beta+1\right)}
\sum_{\substack{I\subseteq \{1,\ldots,n\} \\ |I|=d+1,d+3,\ldots}}
\Theta\left(
\beta+\frac d2;
\{\gamma_i : i\in I\};
\{\gamma_i : i\in I^c\}
\right)
\label{eq:beta_integral_for_hyp_vol_line_1}
\\
&=
\frac{\pi^{d/2-1}\Gamma(\beta+1)}{\Gamma\!\left(\frac d2+\beta+1\right)}
\left(
\prod_{i=1}^n \frac{\Gamma(\gamma_i+1)}{\sqrt{\pi}\,\Gamma\!\left(\gamma_i+\frac12\right)}
\right)
\sum_{\substack{I\subseteq \{1,\ldots,n\} \\ |I|=d+1,d+3,\ldots}}
\Bigg(
a\bigg(
2\beta+d+2+\sum_{i\in I}2\gamma_i;
\{2\gamma_i : i\in I\}
\bigg)
\notag
\\
&\qquad\qquad\qquad\qquad
\times
\bigg(
2\beta+d+1+\sum_{i\in I}2\gamma_i
\bigg)
b\bigg(
2\beta+d+\sum_{i\in I}2\gamma_i;
\{2\gamma_i : i\in I^c\}
\bigg)
\Bigg)
\label{eq:beta_integral_for_hyp_vol_line_2}
\end{align}
as well as
\begin{align}
\cI(\beta)
&=
\frac{2\pi^{d/2}\Gamma(\beta+1)}{\Gamma\!\left(\frac d2+\beta+1\right)}
\left(
\frac12
-
\sum_{\substack{I\subseteq \{1,\ldots,n\} \\ |I|=d-1,d-3,\ldots}}
\Theta\left(
\beta+\frac d2;
\{\gamma_i : i\in I\};
\{\gamma_i : i\in I^c\}
\right)
\right)
\label{eq:beta_integral_for_hyp_vol_2_line_1}
\\
&=
\frac{\pi^{d/2-1}\Gamma(\beta+1)}{\Gamma\!\left(\frac d2+\beta+1\right)}
\Bigg(
\pi
-
\left(
\prod_{i=1}^n \frac{\Gamma(\gamma_i+1)}{\sqrt{\pi}\,\Gamma\!\left(\gamma_i+\frac12\right)}
\right)
\notag
\\
&\qquad\qquad
\times
\sum_{\substack{I\subseteq \{1,\ldots,n\} \\ |I|=d-1,d-3,\ldots}}
\Bigg(
a\bigg(
2\beta+d+2+\sum_{i\in I}2\gamma_i;
\{2\gamma_i : i\in I\}
\bigg)
\notag
\\
&\qquad\qquad\qquad\qquad
\times
\bigg(
2\beta+d+1+\sum_{i\in I}2\gamma_i
\bigg)
b\bigg(
2\beta+d+\sum_{i\in I}2\gamma_i;
\{2\gamma_i : i\in I^c\}
\bigg)
\Bigg)
\Bigg).
\label{eq:beta_integral_for_hyp_vol_2_line_2}
\end{align}
Moreover, the function $\cI(\beta)$ is holomorphic on the half-plane $\{\Re \beta > -\frac{d+1}{2}\}$. The right-hand sides in \eqref{eq:beta_integral_for_hyp_vol_line_1}--\eqref{eq:beta_integral_for_hyp_vol_2_line_2} define holomorphic functions on the punctured half-plane $\{\Re \beta > -\frac{d+1}{2}\}\setminus \{-1,-2,\ldots\}$,
and their singularities at $\beta=-1,-2,\ldots$ are removable. Hence, they extend uniquely to holomorphic functions on $\{\Re \beta > -\frac{d+1}{2}\}$, and these extensions coincide with $\cI(\beta)$.
\end{theorem}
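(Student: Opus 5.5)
The argument runs through analytic continuation from the real interval $\beta>-1$, where the identity holds by Theorem~\ref{theo:beta_absorption_beta_poly}.

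\emph{The real case $\beta>-1$.} Introduce $X_0\sim f_{d,\beta}$ independent of $X_1,\ldots,X_n$. Fubini gives $\cI(\beta)=c_{d,\beta}^{-1}\P[X_0\in\sP]$, and $c_{d,\beta}^{-1}=\pi^{d/2}\Gamma(\beta+1)/\Gamma(\frac d2+\beta+1)$. Inserting the two expressions of Theorem~\ref{theo:beta_absorption_beta_poly} yields \eqref{eq:beta_integral_for_hyp_vol_line_1} and \eqref{eq:beta_integral_for_hyp_vol_2_line_1}. Next expand $\Theta$ via Definition~\ref{def:theta} with $x=\beta+\frac d2$. We have $\prod_{\omega\in Y\cup Z}c_{\omega-1/2}=\prod_{i=1}^n c_{\gamma_i-1/2}=\prod_i\Gamma(\gamma_i+1)/(\sqrt\pi\,\Gamma(\gamma_i+\frac12))$, and $2x+2\sum_{I}\gamma_i+2=2\beta+d+2+\sum_I 2\gamma_i$. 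Collecting the factor $\frac1{2\pi}$ then gives \eqref{eq:beta_integral_for_hyp_vol_line_2} and \eqref{eq:beta_integral_for_hyp_vol_2_line_2}; this is purely algebraic.

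\emph{Holomorphy of both sides.} For $\cI$, the integrand $(1-\|x\|^2)^\beta$ is entire in $\beta$ for each $x\in\BB^d$. On a compact $K\subset\{\Re\beta>-\frac{d+1}2\}$ with $\sigma=\min_K\Re\beta$, it is dominated by $(1-\|x\|^2)^{\min(\sigma,0)}$. I must show that $\E\int_\sP(1-\|x\|^2)^{\sigma}\dd x<\infty$ for $\sigma>-\frac{d+1}2$; since $\sP\subseteq\overline\BB^d$, it suffices to bound the corresponding quantity deterministically for every polytope $[v_1,\ldots,v_n]\subseteq\overline\BB^d$. This is the finiteness of hyperbolic volume alluded to in the introduction, and its proof (even with a uniform bound) is the step I expect to be the main obstacle. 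My approach would be a uniform bound via triangulation into simplices with one vertex at the origin. I would bound $\int_{S}(1-\|x\|^2)^{\sigma}\dd x$ for a simplex $S\subseteq\overline\BB^d$ by observing that near a boundary point $v$ of the sphere, $S$ is contained in a cone with apex $v$. Inside such a cone, $1-\|x\|^2\gtrsim$ the distance to $v$ times something, so the integral in polar coordinates around $v$ behaves like $\int r^{d-1}r^{\sigma}\cdot(\text{angular factor})$. Near the tangency directions one uses $1-\|x\|^2\ge c\,(\text{dist to } v)^2$, which still converges when $2\sigma+d>-1$. Given this domination, holomorphy follows from the standard theorem on parameter integrals used in Lemma~\ref{lem:a_b_analytic}.

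For the right-hand sides: $\Gamma(\beta+1)/\Gamma(\frac d2+\beta+1)$ is meromorphic with possible poles only at $\beta=-1,-2,\ldots$. Every argument of $a$ is $2\beta+d+2+\sum_I2\gamma_i$, and since $\gamma_i\ge\frac d2-1\ge0$ its real part exceeds $\sum_I 2\gamma_i$ precisely when $\Re(2\beta+d+2)>0$, which holds because $\Re\beta>-\frac{d+1}2$. Likewise every argument of $b$ has real part $>-1$. Lemma~\ref{lem:a_b_analytic} then gives holomorphy on the punctured half-plane. Since both sides agree on $(-1,\infty)$, which has accumulation points, the identity theorem on the connected punctured half-plane yields equality there. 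Removability of the singularities at $-1,-2,\ldots$ is then automatic: the right-hand side coincides with the holomorphic function $\cI$ near each such point, so it is bounded there and Riemann's theorem applies. Uniqueness of the extension is clear.
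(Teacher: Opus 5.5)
Your architecture is the same as the paper's: Fubini together with Theorem~\ref{theo:beta_absorption_beta_poly} for real $\beta>-1$, the algebraic expansion of $\Theta$, holomorphy of the right-hand sides on the punctured half-plane via Lemma~\ref{lem:a_b_analytic}, the identity theorem, and Riemann's theorem for removability. All of that is correct.

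The gap is the step you yourself flag. You never produce an integrable majorant for $(1-\|x\|^2)^{\beta}\ind_{\{x\in\sP(\omega)\}}$ on $\Omega\times\BB^d$ that is valid for $\Re\beta$ down to $-\frac{d+1}{2}$. Without it you only know that $\cI$ is holomorphic on $\{\Re\beta>-1\}$, via the trivial bound $\int_{\BB^d}(1-\|x\|^2)^{\Re\beta}\,\dd x$. So nothing is proved on the strip $-\frac{d+1}{2}<\Re\beta\le -1$, and in particular not the removability at $\beta=-1,-2,\ldots$, which is where the theorem has content. Finiteness of $\int_{\sP(\omega)}(1-\|x\|^2)^{\sigma}\,\dd x$ for each realization is not enough. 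You need integrability in $\omega$, in practice a bound independent of the configuration.

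Your cone sketch does not deliver that bound. For a \emph{fixed} simplex with ideal vertex $v$, every other vertex satisfies $\langle v_j-v,v\rangle=\langle v_j,v\rangle-1<0$. Hence on the simplex near $v$ one has $1-\|x\|^2\ge c_0\|x-v\|$ for some $c_0>0$, there are no tangential directions at all, and the integral is finite for every $\sigma>-d$. The exponent $-\frac{d+1}{2}$ is a \emph{uniformity} threshold. The constant $c_0$ (essentially $\min_j(1-\langle v_j,v\rangle)$), the radius on which the cone picture is valid, and the distance from the rest of the simplex to the sphere all degenerate when vertices cluster on the sphere or interior vertices approach it. Such configurations occur with positive probability at every scale. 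Your heuristic $1-\|x\|^2\ge c\|x-v\|^2$ near tangential directions does not control this. It fails near the far end of an almost tangential chord, and on a set of directions of positive measure it would only give $2\sigma+d>0$.

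The paper fills this step with a global fact rather than a local estimate:
\begin{itemize}
\item By Haagerup--Munkholm there is $M(d)<\infty$ bounding the hyperbolic volume of every simplex in $\overline{\BB}^d$.
\item By Carath\'eodory, $\sP(\omega)\subseteq\bigcup_{|J|=d+1}[X_j(\omega):j\in J]$, so $\int_{\sP(\omega)}(1-\|x\|^2)^{-\frac{d+1}{2}}\,\dd x\le\binom{n}{d+1}M(d)$ for every $\omega$.
\item Since $0<1-\|x\|^2\le 1$, one has $|(1-\|x\|^2)^{\beta}|\le(1-\|x\|^2)^{-\frac{d+1}{2}}$ whenever $\Re\beta\ge-\frac{d+1}{2}$.
\end{itemize}
So $g(\omega,x)=(1-\|x\|^2)^{-\frac{d+1}{2}}\ind_{\{x\in\sP(\omega)\}}$ is a single integrable majorant on the whole half-plane, and your parameter-integral argument then goes through. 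If you want to avoid the citation, you need a genuinely global argument rather than a local cone estimate at one vertex. One such route: enlarge each simplex to an ideal one by pushing vertices outward along rays from the opposite face, place one ideal vertex at $\infty$ in the upper half-space model, and induct on $d$.
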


\begin{proof}
\textit{Step 1.} The function $\cI(\beta)$ is well defined and finite for all $\beta \in \CC$ with $\Re \beta > -1$. Indeed, for every realization $\sP(\omega)$ of $\sP$, we have
\[
\int_{\sP(\omega)} \bigl| (1-\|x\|^2)^\beta \bigr| \,\dd x
=
\int_{\sP(\omega)} (1-\|x\|^2)^{\Re\beta}\,\dd x
\le
\int_{\mathbb{B}^d} (1-\|x\|^2)^{\Re\beta}\,\dd x.
\]
The integral on the right-hand side is finite whenever $\Re \beta > -1$, and it does not depend on the realization. Hence $\cI(\beta)$ is well defined and finite on the half-plane $\{\Re \beta > -1\}$.

\medskip
\noindent
\textit{Step 2. Proof for real $\beta > -1$.}
Let $\beta$ be real and satisfy $\beta > -1$. Introduce an additional random point $X_0 \sim f_{d,\beta}$, independent of $X_1,\ldots,X_n$. Then, by Tonelli's theorem and the definition of the beta density,
\[
\cI(\beta)
=
\frac{1}{c_{d,\beta}}\,\P\bigl[X_0 \in [X_1,\ldots,X_n]\bigr].
\]
This probability can be evaluated using Theorem~\ref{theo:beta_absorption_beta_poly}, and we obtain 
\begin{align*}
\cI(\beta)
&=
\frac{2}{c_{d,\beta}}
\sum_{\substack{I\subseteq \{1,\ldots,n\} \\ |I|=d+1,d+3,\ldots}}
\Theta\left(
\beta+\frac d2;
\left\{\beta_i+\frac d2 : i\in I\right\};
\left\{\beta_i+\frac d2 : i\in I^c\right\}
\right)
\\
&=
\frac{2}{c_{d,\beta}}
\left(
\frac12
-
\sum_{\substack{I\subseteq \{1,\ldots,n\} \\ |I|=d-1,d-3,\ldots}}
\Theta\left(
\beta+\frac d2;
\left\{\beta_i+\frac d2 : i\in I\right\};
\left\{\beta_i+\frac d2 : i\in I^c\right\}
\right)
\right).
\end{align*}
These are exactly the formulas in~\eqref{eq:beta_integral_for_hyp_vol_line_1} and~\eqref{eq:beta_integral_for_hyp_vol_2_line_1}. Substituting the definition of $\Theta$ from Definition~\ref{def:theta} yields~\eqref{eq:beta_integral_for_hyp_vol_line_2} and~\eqref{eq:beta_integral_for_hyp_vol_2_line_2}.

Note that this argument applies only for real $\beta>-1$, because only in this case does $f_{d,\beta}$ define a probability density on $\mathbb{B}^d$, and hence only in this case can one introduce an auxiliary random point $X_0 \sim f_{d,\beta}$. To extend the range of $\beta$, we shall use the uniqueness theorem for analytic continuation.

\medskip
\noindent
\textit{Step 3. The function $\cI(\beta)$ is well defined on $\left\{\Re \beta \ge -\frac{d+1}{2}\right\}$.}
We now show that $\cI(\beta)$ is well defined and finite for all $\beta \in \CC$ with $\Re \beta \ge -\frac{d+1}{2}$.
It is well known, see~\cite{haagerup_munkholm_simplices_max_vol_hyperbolic}, that there exists a finite constant $M(d)>0$ such that every hyperbolic simplex $S \subseteq \overline{\mathbb{B}}^d$ has hyperbolic volume at most $M(d)$, that is,
\[
\int_{S} (1-\|x\|^2)^{-(d+1)/2}\,\dd x
\le
M(d).
\]

Now fix a realization $\sP(\omega)$ of the random polytope $\sP$. By Carath\'eodory's theorem, every point of $\sP(\omega)=[X_1(\omega),\ldots,X_n(\omega)]$ belongs to the convex hull of $d+1$ of the points $X_1(\omega),\ldots,X_n(\omega)$, that is
\[
\sP(\omega)
\subseteq
\bigcup_{\substack{J\subseteq \{1,\ldots,n\}\\ |J|=d+1}}
[X_j(\omega):j\in J].
\]
Next, for every $\beta \in \CC$ with $\Re \beta \ge -\frac{d+1}{2}$,
\[
\int_{\sP(\omega)} \bigl|(1-\|x\|^2)^\beta\bigr|\,\dd x
=
\int_{\sP(\omega)} (1-\|x\|^2)^{\Re\beta}\,\dd x
\le
\int_{\sP(\omega)} (1-\|x\|^2)^{-(d+1)/2}\,\dd x.
\]
Using the above covering and summing over the simplices, we obtain
\[
\int_{\sP(\omega)} (1-\|x\|^2)^{-(d+1)/2}\,\dd x
\le
\sum_{\substack{J\subseteq \{1,\ldots,n\}\\ |J|=d+1}}
\int_{[X_j(\omega):j\in J]} (1-\|x\|^2)^{-(d+1)/2}\,\dd x
\le
\binom{n}{d+1}M(d).
\]
This deterministic upper bound is independent of the realization of $\sP$, and hence $\cI(\beta)$ is well defined and finite on $\left\{\Re \beta \ge -\frac{d+1}{2}\right\}$.

\medskip
\noindent
\textit{Step 4. The function $\cI(\beta)$ is holomorphic on $\{\Re \beta > -\frac{d+1}{2}\}$.}
We next prove that $\cI$ is holomorphic on the half-plane $\{\Re \beta > -\frac{d+1}{2}\}$. To this end, we apply a standard theorem on holomorphic parameter-dependent integrals; see~\cite[Kapitel~IV, Satz~5.8]{elstrodt_book} or~\cite[Chapter~XV, \S1, Lemma~1.1 on p.~409]{lang_book_complex_analysis}.

Let $(\Omega,\mathcal{F},\P)$ be the probability space on which $X_1,\ldots,X_n$ are defined, and recall that $\sP(\omega)=[X_1(\omega),\ldots,X_n(\omega)]$. For $\beta \in \CC$ with $\Re \beta > -\frac{d+1}{2}$, we may write
\[
\cI(\beta)
=
\int_{\Omega\times \mathbb{B}^d} F((\omega,x),\beta)\,\dd x\,\P(\dd\omega),
\qquad
F((\omega,x),\beta)
:=
(1-\|x\|^2)^\beta \ind_{\{x\in \sP(\omega)\}}.
\]
For every fixed $(\omega,x)\in \Omega\times \mathbb{B}^d$, the function $\beta \mapsto F((\omega,x),\beta)$ is holomorphic on $\CC$.
Next, define
\[
g:\Omega\times \mathbb{B}^d \to [0,\infty),
\qquad
g(\omega,x)
:=
(1-\|x\|^2)^{-(d+1)/2}\ind_{\{x\in \sP(\omega)\}}.
\]
By Step~3, the function $g$ is integrable, because
\[
\int_{\Omega\times \mathbb{B}^d} g(\omega,x)\,\P(\dd\omega)\, \dd x
=
\E\left[\int_{\sP} (1-\|x\|^2)^{-(d+1)/2}\,\dd x\right]
\le
\binom{n}{d+1}M(d).
\]
Moreover, for every $(\omega,x)\in \Omega\times \mathbb{B}^d$ and every $\beta \in \CC$ with $\Re \beta > -\frac{d+1}{2}$, we have
\[
|F((\omega,x),\beta)|
=
(1-\|x\|^2)^{\Re\beta}\ind_{\{x\in \sP(\omega)\}}
\le
(1-\|x\|^2)^{-(d+1)/2}\ind_{\{x\in \sP(\omega)\}}
=
g(\omega,x).
\]
Therefore, \cite[Kapitel~IV, Satz~5.8]{elstrodt_book} implies that $\cI$ is holomorphic on $\left\{\Re \beta > -\frac{d+1}{2}\right\}$.

\medskip
\noindent
\textit{Step 5. The right-hand sides are meromorphic on $\left\{\Re \beta > -\frac{d+1}{2}\right\}$.}
We claim that the right-hand sides of~\eqref{eq:beta_integral_for_hyp_vol_line_2} and~\eqref{eq:beta_integral_for_hyp_vol_2_line_2} define meromorphic functions of $\beta$ on the half-plane $\left\{\Re \beta > -\frac{d+1}{2}\right\}$, and are holomorphic there away from the points $\beta=-1,-2,\ldots$.

Recall from Definition~\ref{def:a_and_b_quantities} that the functions $a(\alpha;\alpha_1,\ldots,\alpha_m)$ and $b(\alpha;\alpha_1,\ldots,\alpha_m)$ are defined for real parameters $\alpha_1,\ldots,\alpha_m \ge 0$ and complex $\alpha$ satisfying, respectively, $\Re \alpha > \alpha_1+\cdots+\alpha_m$ and $\Re \alpha > -1$. In our situation, the parameters entering the $a$- and $b$-terms are of the form $2\gamma_i=2\beta_i+d$, which are non-negative because $d \ge 2$ and $\beta_i \ge -1$.
Next, let $\beta \in \CC$ satisfy $\Re \beta > -\frac{d+1}{2}$. For every subset $I \subseteq \{1,\ldots,n\}$, the first argument of the corresponding $a$-term is
$
2\beta+d+2+\sum_{i\in I}2\gamma_i,
$
while the sum of the remaining arguments equals $\sum_{i\in I}2\gamma_i$. Hence
\[
\Re\Big(2\beta+d+2+\sum_{i\in I}2\gamma_i\Big)
>
\sum_{i\in I}2\gamma_i,
\]
because $\Re(2\beta+d+2)>1$. Likewise, the first argument of the corresponding $b$-term is
$
2\beta+d+\sum_{i\in I}2\gamma_i,
$
whose real part is greater than $-1$. Therefore, all $a$- and $b$-terms appearing in~\eqref{eq:beta_integral_for_hyp_vol_line_2} and~\eqref{eq:beta_integral_for_hyp_vol_2_line_2} are well defined and, by Lemma~\ref{lem:a_b_analytic}, holomorphic functions of $\beta$ on the half-plane $\{\Re \beta > -\frac{d+1}{2}\}$.

Furthermore, the function $1/\Gamma\!\left(\frac d2+\beta+1\right)$ is entire, whereas $\Gamma(\beta+1)$ is meromorphic on $\CC$ and holomorphic away from $\beta=-1,-2,\ldots$. It follows that the right-hand sides of~\eqref{eq:beta_integral_for_hyp_vol_line_2} and~\eqref{eq:beta_integral_for_hyp_vol_2_line_2} are meromorphic on $\{\Re \beta > -\frac{d+1}{2}\}$ and holomorphic there away from $\beta=-1,-2,\ldots$.

\medskip
\noindent
\textit{Step 6. Completion of the proof.}
We have shown that $\cI(\beta)$, that is, the left-hand side of~\eqref{eq:beta_integral_for_hyp_vol_line_2}, is holomorphic on the half-plane $\{\Re \beta > -\frac{d+1}{2}\}$. On the other hand, by Step~5, the right-hand side of~\eqref{eq:beta_integral_for_hyp_vol_line_2} is meromorphic on the same half-plane and holomorphic away from the points $\beta=-1,-2,\ldots$. Moreover, by Step~2, the two sides agree for all real $\beta>-1$.

Hence, on the punctured half-plane $\{\Re \beta > -\frac{d+1}{2}\}\setminus\{-1,-2,\ldots\}$, both sides are holomorphic and coincide on the set $\{\beta \in \mathbb{R} : \beta>-1\}$. By the identity theorem for analytic functions, they therefore coincide on $\{\Re \beta > -\frac{d+1}{2}\}\setminus\{-1,-2,\ldots\}$. Since the left-hand side is holomorphic on the whole half-plane, it follows that all singularities of the right-hand side of~\eqref{eq:beta_integral_for_hyp_vol_line_2} at $\beta=-1,-2,\ldots$ are removable.

The same argument applies to~\eqref{eq:beta_integral_for_hyp_vol_2_line_2}: its right-hand side is meromorphic on $\{\Re \beta > -\frac{d+1}{2}\}$ and holomorphic away from $\beta=-1,-2,\ldots$, while by Step~2 it agrees with $\cI(\beta)$ for all real $\beta>-1$. Hence, by the identity theorem, both sides coincide on $\{\Re \beta > -\frac{d+1}{2}\}\setminus\{-1,-2,\ldots\}$, and the singularities at $\beta=-1,-2,\ldots$ are removable. This completes the proof.
\end{proof}

\begin{remark}[On the removable singularities]\label{rem:removable_singularities}
Let us give a direct explanation for why the singularities in~\eqref{eq:beta_integral_for_hyp_vol_line_2} are removable. The poles of $\Gamma(\beta+1)$ at $\beta=-1,-2,\ldots$ are cancelled by zeros of the corresponding $a$-terms.
Indeed, it is known from~\cite[Proposition~5.16]{kabluchko_steigenberger_beta_cones} that for every $m \ge 2$, every $\ell \in \mathbb{N}_0$ satisfying $m-1-2\ell \ge 1$, and all $\alpha_1,\ldots,\alpha_m \ge 0$, one has
\[
a\bigl(m-1-2\ell+\alpha_1+\cdots+\alpha_m;\alpha_1,\ldots,\alpha_m\bigr)=0.
\]
Now consider an $a$-term appearing in~\eqref{eq:beta_integral_for_hyp_vol_line_2}, namely
$
a(2\beta+d+2+\sum_{i\in I}2\gamma_i;\{2\gamma_i:i\in I\}).
$
Writing $\{\alpha_1,\ldots,\alpha_m\}=\{2\gamma_i:i\in I\}$, we have $m=|I|$, and this term can be rewritten as
\[
a\bigl(m-1-2\ell+\alpha_1+\cdots+\alpha_m;\alpha_1,\ldots,\alpha_m\bigr),
\qquad
\ell \coloneqq -\beta + \frac{|I|-d-3}{2}.
\]
Now let $\beta \in \{-1,-2,\ldots\}$ satisfy $\beta \ge -\frac{d+1}{2}$, and let $|I| \in \{d+1,d+3,\ldots\}$. Then $m=|I| \in \mathbb{N}$, $m \ge d+1 \ge 3$, $\ell \in \mathbb{N}_0$, and $m-1-2\ell = 2\beta+d+2 \ge 1$.
Hence the above vanishing result applies, and all these $a$-terms are equal to zero at such values of $\beta$.

Since each $a$-term is holomorphic in $\beta$, this zero is of at least first order. Therefore, it cancels the simple pole of $\Gamma(\beta+1)$ at $\beta=-1,-2,\ldots$. This explains directly why the singularities in~\eqref{eq:beta_integral_for_hyp_vol_line_2} are removable.
\end{remark}

\subsection{Expected beta integrals at removable singularities}

The next theorem treats the case in which the factor $\Gamma(\beta+1)$ has a pole, so that the formulas from Theorem~\ref{theo:expected_beta_volume_beta_polytopes} do not apply directly.

\begin{theorem}[Expected beta integrals at removable singularities]\label{theo:expected_hyp_volume_beta_polytopes_at_poles}
Let $d \ge 2$ and $n \ge d+1$ be integers, and let $X_1,\ldots,X_n$ be independent random points with distributions $X_i \sim f_{d,\beta_i}$, where $\beta_1,\ldots,\beta_n \ge -1$. Put $\gamma_i \coloneqq \beta_i + \frac d2$ for $i=1,\ldots,n$, define $\sP \coloneqq [X_1,\ldots,X_n]$, and set
\[
\cI(\beta)
\coloneqq
\E\left[\int_{\sP} (1-\|x\|^2)^\beta \,\dd x \right].
\]
Then, for every $\beta \in \{-1,-2,-3,\ldots\}$ satisfying $\beta > -\frac{d+1}{2}$, one has
\begin{multline}\label{eq:beta_integral_for_hyp_vol_poles}
\cI(\beta)
=
\frac{2\pi^{\frac d2-1}}{\Gamma\!\left(\beta+\frac d2+1\right)}
\frac{(-1)^{-\beta-1}}{(-\beta-1)!}
\left(
\prod_{i=1}^n
\frac{\Gamma(\gamma_i+1)}{\sqrt{\pi}\,\Gamma\!\left(\gamma_i+\frac12\right)}
\right)
\\
\times
\sum_{\substack{I\subseteq \{1,\ldots,n\} \\ |I|=d+1,d+3,\ldots}}
\Bigg(
a'\bigg(
2\beta+d+2+\sum_{i\in I}2\gamma_i;
\{2\gamma_i : i\in I\}
\bigg)
\bigg(
2\beta+d+1+\sum_{i\in I}2\gamma_i
\bigg)
\\
\times
b\bigg(
2\beta+d+\sum_{i\in I}2\gamma_i;
\{2\gamma_i : i\in I^c\}
\bigg)
\Bigg).
\end{multline}
Here, $a'$ denotes the derivative of $a$ with respect to its first argument. Alternatively,
\begin{multline}\label{eq:beta_integral_for_hyp_vol_poles_2_ab}
\cI(\beta)
=
-\frac{2\pi^{\frac d2-1}}{\Gamma\!\left(\beta+\frac d2+1\right)}
\frac{(-1)^{-\beta-1}}{(-\beta-1)!}
\left(
\prod_{i=1}^n
\frac{\Gamma(\gamma_i+1)}{\sqrt{\pi}\,\Gamma\!\left(\gamma_i+\frac12\right)}
\right)
\\
\times
\sum_{\substack{I\subseteq \{1,\ldots,n\} \\ |I|=d-1,d-3,\ldots}}
\left.
\frac{\partial}{\partial \alpha}
\Bigg(
a\bigg(\alpha+2;\{2\gamma_i : i\in I\}\bigg)
(\alpha+1)
b\bigg(\alpha;\{2\gamma_i : i\in I^c\}\bigg)
\Bigg)
\right|_{\alpha=\,2\beta+d+\sum_{i\in I}2\gamma_i}.
\end{multline}
Moreover, the parameters of the functions $a$ and $b$ appearing in~\eqref{eq:beta_integral_for_hyp_vol_poles} and~\eqref{eq:beta_integral_for_hyp_vol_poles_2_ab} satisfy the assumptions of Definition~\ref{def:a_and_b_quantities}. In particular, all terms on the right-hand sides of~\eqref{eq:beta_integral_for_hyp_vol_poles} and~\eqref{eq:beta_integral_for_hyp_vol_poles_2_ab} are finite.
\end{theorem}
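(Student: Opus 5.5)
Since $\beta_0\in\{-1,-2,\ldots\}$ with $\beta_0>-\frac{d+1}{2}$ lies in the open half-plane $\{\Re\beta>-\frac{d+1}{2}\}$, Theorem~\ref{theo:expected_beta_volume_beta_polytopes} tells us that $\cI$ is holomorphic there. It also says that $\cI(\beta)$ equals the right-hand side of~\eqref{eq:beta_integral_for_hyp_vol_line_2} on a punctured neighbourhood of $\beta_0$. Hence $\cI(\beta_0)=\lim_{\beta\to\beta_0}$ of that right-hand side, and the plan is to evaluate this limit term by term. Write $\Gamma(\beta+1)=\Gamma(\beta+1)(\beta-\beta_0)\cdot(\beta-\beta_0)^{-1}$. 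The standard residue formula gives
\[
\lim_{\beta\to\beta_0}(\beta-\beta_0)\Gamma(\beta+1)=\frac{(-1)^{-\beta_0-1}}{(-\beta_0-1)!},
\]
because $\beta+1$ tends to the non-positive integer $\beta_0+1=-(-\beta_0-1)$. The factor $1/\Gamma(\frac d2+\beta+1)$ is entire, and the $b$-term and the linear factor are holomorphic near $\beta_0$ by Step~5 of the previous proof. Both simply get evaluated at $\beta_0$.

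The key step is the $a$-term, for every $I$ with $|I|\in\{d+1,d+3,\ldots\}$. Remark~\ref{rem:removable_singularities}, via \cite[Proposition~5.16]{kabluchko_steigenberger_beta_cones}, shows that
\[
A_I(\beta):=a\Bigl(2\beta+d+2+\sum_{i\in I}2\gamma_i;\{2\gamma_i:i\in I\}\Bigr)
\]
vanishes at $\beta=\beta_0$. It is holomorphic in $\beta$ by Lemma~\ref{lem:a_b_analytic}. Therefore
\[
\frac{A_I(\beta)}{\beta-\beta_0}\longrightarrow A_I'(\beta_0)=2\,a'\Bigl(2\beta_0+d+2+\sum_{i\in I}2\gamma_i;\ldots\Bigr),
\]
where the factor $2$ comes from the chain rule. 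Combining this with the prefactor $\pi^{d/2-1}$ from~\eqref{eq:beta_integral_for_hyp_vol_line_2} produces exactly the constant $2\pi^{d/2-1}$ and the residue factor in~\eqref{eq:beta_integral_for_hyp_vol_poles}. I must check that the hypotheses of Remark~\ref{rem:removable_singularities} hold, namely $m=|I|\ge 3$, $\ell\in\N_0$ and $m-1-2\ell=2\beta_0+d+2\ge1$. All of these follow from $\beta_0>-\frac{d+1}{2}$, as already verified there. I should also check that the arguments are admissible in Definition~\ref{def:a_and_b_quantities}. The first argument of $a$ exceeds the sum of the others by $2\beta_0+d+2>1$, and the first argument of $b$ has real part $>-1$. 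The parameters $2\gamma_i=2\beta_i+d$ are non-negative. This settles the finiteness claim.

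For~\eqref{eq:beta_integral_for_hyp_vol_poles_2_ab} I would start from~\eqref{eq:beta_integral_for_hyp_vol_2_line_2} instead. There the bracket is $\pi-\prod(\cdots)\sum_{|I|=d-1,\ldots}G_I(\beta)$, where $G_I(\beta)$ denotes the product $a\cdot(\alpha+1)\cdot b$ evaluated at $\alpha=2\beta+d+\sum_{i\in I}2\gamma_i$. This bracket is holomorphic near $\beta_0$. Since $\cI$ is finite, the whole bracket must vanish at $\beta_0$; otherwise the pole of $\Gamma(\beta+1)$ would survive. This gives the zero at $\beta_0$ without any knowledge of individual terms. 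The limit of the bracket divided by $(\beta-\beta_0)$ is then its derivative, which equals $-\prod(\cdots)\sum_I G_I'(\beta_0)$. By the chain rule, $G_I'(\beta_0)=2\,\partial_\alpha[\cdots]$ at $\alpha=2\beta_0+d+\sum_{i\in I}2\gamma_i$. This yields~\eqref{eq:beta_integral_for_hyp_vol_poles_2_ab}, including the overall minus sign and the factor $2\pi^{d/2-1}$.

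The main subtlety is in this second formula. When $|I|\le d-1$, the terms $a(\alpha+2;\cdot)$ need not vanish individually, so the cancellation must be argued globally via holomorphy of $\cI$, as above. I also need the admissibility of the arguments in this range. The first argument of $a$ is $\alpha+2$, which exceeds $\sum_{i\in I}2\gamma_i$ by $2\beta_0+d+2>0$, and for $b$ we have $\Re\alpha>-1$. Both hold in an open neighbourhood of $\beta_0$, so differentiation is legitimate there by Lemma~\ref{lem:a_b_analytic}.
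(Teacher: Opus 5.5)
Your proposal is correct and follows essentially the same route as the paper. You pass to the limit $\beta\to\beta_0$ in \eqref{eq:beta_integral_for_hyp_vol_line_2} using the simple pole of $\Gamma(\beta+1)$ and the individual vanishing of each $a$-term from Remark~\ref{rem:removable_singularities}, with the chain-rule factor $2$; for \eqref{eq:beta_integral_for_hyp_vol_poles_2_ab} you use the global vanishing of the bracket in \eqref{eq:beta_integral_for_hyp_vol_2_line_2}, forced by holomorphy of $\cI$. That last step also needs $\Gamma(\beta_0+\frac d2+1)$ to be finite and non-zero, which holds since $\beta_0+\frac d2+1>\frac12$; the paper states this explicitly, and you should too.
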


\begin{proof}
\textit{Proof of~\eqref{eq:beta_integral_for_hyp_vol_poles}}. Fix $k \in \{1,2,3,\ldots\}$ with $k < \frac{d+1}{2}$, and set $\beta_\eps \coloneqq -k+\eps$ with $\eps>0$. Substituting $\beta=\beta_\eps$ into~\eqref{eq:beta_integral_for_hyp_vol_line_2}, we obtain
\begin{align}
\cI(\beta_\eps)
&=
\frac{\pi^{\frac d2-1}\Gamma(-k+1+\eps)}{\Gamma\!\left(\frac d2-k+1+\eps\right)}
\left(
\prod_{i=1}^n \frac{\Gamma(\gamma_i+1)}{\sqrt{\pi}\,\Gamma\!\left(\gamma_i+\frac12\right)}
\right)
\notag
\\
&\quad\times
\sum_{\substack{I\subseteq \{1,\ldots,n\} \\ |I|=d+1,d+3,\ldots}}
\Bigg(
a\bigg(
-2k+2\eps+d+2+\sum_{i\in I}2\gamma_i;
\{2\gamma_i:i\in I\}
\bigg)
\notag
\\
&\qquad\qquad\qquad\times
\bigg(
-2k+2\eps+d+1+\sum_{i\in I}2\gamma_i
\bigg)
b\bigg(
-2k+2\eps+d+\sum_{i\in I}2\gamma_i;
\{2\gamma_i:i\in I^c\}
\bigg)
\Bigg).
\label{eq:beta_integral_for_hyp_vol_line_2_with_eps}
\end{align}
We now let $\eps \downarrow 0$. Since $\cI$ is holomorphic at $-k$ by Theorem~\ref{theo:expected_beta_volume_beta_polytopes}, we have $\cI(-k+\eps)\to \cI(-k)$ as $\eps\downarrow 0$.
Next, the gamma factor has the expansion
\begin{equation}\label{eq:gamma_pole_residuum}
\Gamma(-k+1+\eps)
=
\frac{(-1)^{k-1}}{(k-1)!\,\eps}
+
O(1),
\qquad
\eps\downarrow 0.
\end{equation}
Moreover, $\Gamma\!\left(\frac d2-k+1+\eps\right)\to \Gamma\!\left(\frac d2-k+1\right)$, which is finite and non-zero because $k<\frac{d+1}{2}$.
Now fix a set $I$ occurring in the sum. Put
$
\alpha_I \coloneqq -2k+d+\sum_{i\in I}2\gamma_i.
$
Then the corresponding $b$-term is
$
b(\alpha_I+2\eps;\{2\gamma_i:i\in I^c\}).
$
Since $\alpha_I \ge d-2k>-1$, the function $b$ is holomorphic at $\alpha_I$ by Lemma~\ref{lem:a_b_analytic}, and therefore
\[
b\bigl(\alpha_I+2\eps;\{2\gamma_i:i\in I^c\}\bigr)
=
b\bigl(\alpha_I;\{2\gamma_i:i\in I^c\}\bigr)+o(1)
\qquad
\text{as } \eps\downarrow 0.
\]
By Remark~\ref{rem:removable_singularities}, the corresponding $a$-term vanishes at $\eps=0$. Since $a$ is holomorphic in its first argument, we obtain the first-order expansion
\[
a\bigl(\alpha_I+2+2\eps;\{2\gamma_i:i\in I\}\bigr)
=
2\eps\,
a'\bigl(\alpha_I+2;\{2\gamma_i:i\in I\}\bigr)
+
o(\eps)
\qquad
\text{as } \eps\downarrow 0.
\]
Substituting these expansions into~\eqref{eq:beta_integral_for_hyp_vol_line_2_with_eps}, and using that the sum is finite, we obtain exactly~\eqref{eq:beta_integral_for_hyp_vol_poles}.

\medskip \noindent
\textit{Proof of~\eqref{eq:beta_integral_for_hyp_vol_poles_2_ab}.} We argue similarly, now starting from~\eqref{eq:beta_integral_for_hyp_vol_2_line_2}. Put $\beta=-k+\eps$ with $\eps>0$ and let $\eps\downarrow 0$. As before, we have~\eqref{eq:gamma_pole_residuum}.
Since $\cI$ is holomorphic at $-k$, the bracket in~\eqref{eq:beta_integral_for_hyp_vol_2_line_2} must vanish at $\eps=0$. Hence its first-order Taylor expansion is
{\scriptsize
\begin{multline*}
\pi
-
\left( \prod_{i=1}^n \frac{\Gamma(\gamma_i+1)}{\sqrt{\pi}\,\Gamma\!\left(\gamma_i+\frac12\right)} \right)
\sum_{\substack{I\subseteq \{1,\ldots,n\}\\ |I|=d-1,d-3,\ldots}}
\Bigg(
a\bigg(-2k+2\eps+d+2+\sum_{i\in I}2\gamma_i;\{2\gamma_i:i\in I\}\bigg)
\\
\times
\bigg(-2k+2\eps+d+1+\sum_{i\in I}2\gamma_i\bigg)
\,b\bigg(-2k+2\eps+d+\sum_{i\in I}2\gamma_i;\{2\gamma_i:i\in I^c\}\bigg)
\Bigg)
\\
=
-2\eps
\left( \prod_{i=1}^n \frac{\Gamma(\gamma_i+1)}{\sqrt{\pi}\,\Gamma\!\left(\gamma_i+\frac12\right)} \right)
\!\!\!\!\!
\sum_{\substack{I\subseteq \{1,\ldots,n\}\\ |I|=d-1,d-3,\ldots}}
\!\!\!\!\!
\left.
\frac{\partial}{\partial \alpha}
\Bigg(
a\bigg(\alpha+2;\{2\gamma_i:i\in I\}\bigg)
(\alpha+1)
b\bigg(\alpha;\{2\gamma_i:i\in I^c\}\bigg)
\Bigg)
\right|_{\alpha=\, -2k+d+\sum_{i\in I}2\gamma_i}
+o(\eps).
\end{multline*}
}
Multiplying this expansion by~\eqref{eq:gamma_pole_residuum} and letting $\eps\downarrow 0$ yields~\eqref{eq:beta_integral_for_hyp_vol_poles_2_ab}.
\end{proof}

\subsection{Expected hyperbolic volume}

The expected hyperbolic volume of a beta polytope $\sP$ is given by
\[
\E \Vol_d^{\mathrm{hyp}}(\sP)
=
\E\left[\int_{\sP} (1-\|x\|^2)^{-\frac{d+1}{2}}\,\dd x\right].
\]
To compute it, we let $\beta \downarrow -\frac{d+1}{2}$ in Theorem~\ref{theo:expected_beta_volume_beta_polytopes}. The resulting formulas depend on the parity of $d$.

\begin{theorem}[Expected hyperbolic volume of beta polytopes: odd $d$]\label{theo:expected_hyp_volume_beta_polytopes_odd_d}
Let $d \ge 3$ be \textbf{odd}, let $n \ge d+1$, and let $X_1,\ldots,X_n$ be independent random points with distributions $X_i \sim f_{d,\beta_i}$, where $\beta_1,\ldots,\beta_n \ge -1$. Put $\gamma_i \coloneqq \beta_i + \frac d2$ for $i=1,\ldots,n$, and define $\sP \coloneqq [X_1,\ldots,X_n]$. Then
\begin{multline}\label{eq:beta_integral_for_hyp_vol_poles_hyperbolic_odd_d}
\E \Vol_d^{\mathrm{hyp}}(\sP)
=
\frac{2\pi^{\frac{d-3}{2}}(-1)^{\frac{d-1}{2}}}{\left(\frac{d-1}{2}\right)!}
\left(
\prod_{i=1}^n
\frac{\Gamma(\gamma_i+1)}{\sqrt{\pi}\,\Gamma\!\left(\gamma_i+\frac12\right)}
\right)
\\
\times
\sum_{\substack{I\subseteq \{1,\ldots,n\} \\ |I|=d+1,d+3,\ldots}}
a'\bigg(
1+\sum_{i\in I}2\gamma_i;
\{2\gamma_i:i\in I\}
\bigg)
\bigg(
\sum_{i\in I}2\gamma_i
\bigg)
b\bigg(
\sum_{i\in I}2\gamma_i-1;
\{2\gamma_i:i\in I^c\}
\bigg).
\end{multline}
The parameters of the functions $a$ and $b$ appearing in~\eqref{eq:beta_integral_for_hyp_vol_poles_hyperbolic_odd_d} satisfy the assumptions of Definition~\ref{def:a_and_b_quantities}. In particular, all terms on the right-hand side are finite.
\end{theorem}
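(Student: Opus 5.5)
For odd $d$, the hyperbolic exponent $\beta=-\frac{d+1}{2}$ is a negative integer, and $\Gamma(\beta+1)$ has a pole there. So the hyperbolic volume is exactly the "removable singularity" case of Theorem~\ref{theo:expected_hyp_volume_beta_polytopes_at_poles}. That theorem, however, was stated only for integer $\beta>-\frac{d+1}{2}$, and the endpoint is excluded. The plan is therefore to redo its limiting argument at the boundary point $k=\frac{d+1}{2}$, approaching from inside the half-plane via $\beta_\eps=-\frac{d+1}{2}+\eps$ with $\eps\downarrow0$.

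\textit{Left-hand side.} We have $\cI(\beta_\eps)\to \E\Vol_d^{\mathrm{hyp}}(\sP)$ by dominated convergence. Pointwise, $(1-\|x\|^2)^{\beta_\eps}\uparrow(1-\|x\|^2)^{-(d+1)/2}$, and the majorant $g$ from Step~4 of the proof of Theorem~\ref{theo:expected_beta_volume_beta_polytopes} is integrable by Step~3 (the Haagerup--Munkholm bound). Monotone convergence would also suffice.

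\textit{Right-hand side.} Substitute $\beta_\eps$ into \eqref{eq:beta_integral_for_hyp_vol_line_2}, which holds for $\eps>0$ small since $\beta_\eps\notin\{-1,-2,\ldots\}$. I then expand each factor as follows.
\begin{itemize}
\item Equation~\eqref{eq:gamma_pole_residuum} with $k=\frac{d+1}{2}$ gives $\Gamma(\beta_\eps+1)=\frac{(-1)^{(d-1)/2}}{((d-1)/2)!\,\eps}+O(1)$.
\item $\Gamma(\frac d2+\beta_\eps+1)=\Gamma(\frac12+\eps)\to\sqrt\pi$. Combined with $\pi^{d/2-1}$ and the factor $2$ from the $a'$ expansion below, this yields the prefactor $2\pi^{(d-3)/2}$.
\item Write $\alpha_I=\sum_{i\in I}2\gamma_i-1$. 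The $b$-term is $b(\alpha_I+2\eps;\cdot)$. Here $\alpha_I\ge 0\cdot|I|-1$, and in fact $2\gamma_i=2\beta_i+d\ge d-2\ge1$, so $\alpha_I\ge |I|-1>-1$. Hence $b$ is holomorphic at $\alpha_I$ by Lemma~\ref{lem:a_b_analytic} and converges to $b(\alpha_I;\cdot)$.
\item The linear factor tends to $\sum_{i\in I}2\gamma_i$.
\item The $a$-term is $a(1+\sum_{i\in I}2\gamma_i+2\eps;\{2\gamma_i\})$. Its first argument exceeds the sum of the parameters by exactly $1$. This is on the boundary of nothing problematic: Lemma~\ref{lem:a_b_analytic} requires strict excess, and $1>0$ holds, so $a$ is holomorphic there.
\end{itemize}

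\textit{Vanishing of the $a$-term.} The remaining key point is that $a(1+\sum\alpha_j;\alpha_1,\ldots,\alpha_m)=0$ for $m=|I|$. This is the vanishing result quoted in Remark~\ref{rem:removable_singularities} with $m-1-2\ell=1$, that is, $\ell=\frac{m-2}{2}$. Since $|I|\equiv d+1$ is even, $\ell\in\N_0$ and $m\ge d+1\ge4$. Equivalently, $\ell=-\beta+\frac{|I|-d-3}{2}$ at $\beta=-\frac{d+1}{2}$, which is a nonnegative integer, and $m-1-2\ell=2\beta+d+2=1\ge1$. Thus the $a$-term equals $2\eps\,a'(\cdot)+o(\eps)$. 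Multiplying out, the $1/\eps$ from $\Gamma$ cancels, and letting $\eps\downarrow0$ gives \eqref{eq:beta_integral_for_hyp_vol_poles_hyperbolic_odd_d}; the finite sum permits term-by-term limits.

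\textit{Main obstacle.} The only delicate issue is justifying holomorphy of all ingredients at the endpoint $\beta=-\frac{d+1}{2}$, where $\cI$ itself was only shown holomorphic on the open half-plane. This is circumvented as described above. Continuity of $\cI$ from the right along the real axis comes from dominated convergence, not from holomorphy. The $a$ and $b$ terms are genuinely holomorphic at the endpoint arguments, as checked in the list, which also establishes the final assertion that the parameters satisfy Definition~\ref{def:a_and_b_quantities}.
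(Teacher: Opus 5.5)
Your proposal is correct and follows essentially the same route as the paper. Like the paper, you substitute $\beta=-\frac{d+1}{2}+\eps$ into \eqref{eq:beta_integral_for_hyp_vol_line_2}, cancel the simple pole of $\Gamma(\beta+1)$ against the first-order zero of each $a$-term from Remark~\ref{rem:removable_singularities}, pass to the limit in the $b$-terms by holomorphy, and use monotone (or, as you note, dominated) convergence for $\cI$. The only differences are cosmetic: you check the vanishing condition $m-1-2\ell=1$ directly, and you use the slightly stronger bound $2\gamma_i\ge d-2\ge 1$ where the paper only uses $\gamma_i>0$.
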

\begin{proof}
The argument is almost the same as in the proof of Theorem~\ref{theo:expected_hyp_volume_beta_polytopes_at_poles}. We substitute $\beta=-\frac{d+1}{2}+\eps$ with $\eps>0$ into~\eqref{eq:beta_integral_for_hyp_vol_line_2} and let $\eps\downarrow 0$.
Since $d$ is odd,
\[
\Gamma\!\left(-\frac{d-1}{2}+\eps\right)
=
\frac{(-1)^{\frac{d-1}{2}}}{\left(\frac{d-1}{2}\right)!\,\eps}
+O(1),
\qquad
\Gamma\!\left(\frac12+\eps\right)\to \sqrt{\pi}.
\]
By Remark~\ref{rem:removable_singularities}, each $a$-term in~\eqref{eq:beta_integral_for_hyp_vol_line_2} has the expansion
\[
a\bigg(
1+2\eps+\sum_{i\in I}2\gamma_i;
\{2\gamma_i:i\in I\}
\bigg)
=
2\eps\,
a'\bigg(
1+\sum_{i\in I}2\gamma_i;
\{2\gamma_i:i\in I\}
\bigg)
+o(\eps).
\]
Moreover, $-1+\sum_{i\in I}2\gamma_i>-1$, because $I\neq\varnothing$ and $\gamma_i=\beta_i+\frac d2>0$ for all $i$, so the corresponding $b$-term in~\eqref{eq:beta_integral_for_hyp_vol_line_2} converges by continuity.

On the other hand, as $\eps\downarrow 0$, the functions $(1-\|x\|^2)^{-\frac{d+1}{2}+\eps}\ind_{\{x\in \sP(\omega)\}}$
increase pointwise to $(1-\|x\|^2)^{-\frac{d+1}{2}}\ind_{\{x\in \sP(\omega)\}}$,
and therefore the monotone convergence theorem yields
\[
\cI\!\left(-\frac{d+1}{2}+\eps\right)\to \E \Vol_d^{\mathrm{hyp}}(\sP).
\]
Combining these facts and passing to the limit in~\eqref{eq:beta_integral_for_hyp_vol_line_2} proves~\eqref{eq:beta_integral_for_hyp_vol_poles_hyperbolic_odd_d}.
\end{proof}

If $d$ is even, then the factor $\Gamma(\beta+1)$ in~\eqref{eq:beta_integral_for_hyp_vol_line_2} is holomorphic at $\beta=-\frac{d+1}{2}$. We shall show that, after a suitable interpretation of the potentially singular terms, the formulas of Theorem~\ref{theo:expected_beta_volume_beta_polytopes} remain valid at this value of $\beta$.

\begin{theorem}[Expected hyperbolic volume of beta polytopes: even $d$]\label{theo:expected_hyp_volume_beta_polytopes_even_d}
Let $d \ge 2$ be \textbf{even}, let $n \ge d+1$, and let $X_1,\ldots,X_n$ be independent random points with distributions $X_i \sim f_{d,\beta_i}$, where $\beta_1,\ldots,\beta_n \ge -1$. Put $\gamma_i \coloneqq \beta_i + \frac d2$ for $i=1,\ldots,n$, and define $\sP \coloneqq [X_1,\ldots,X_n]$. Then the formulas of Theorem~\ref{theo:expected_beta_volume_beta_polytopes} extend to $\beta=-\frac{d+1}{2}$, and in particular
\begin{align}
\E \Vol_d^{\mathrm{hyp}}(\sP)
&=
\frac{(-2\pi)^{d/2}}{\pi(d-1)!!}
\left(
\prod_{i=1}^n
\frac{\Gamma(\gamma_i+1)}{\sqrt{\pi}\,\Gamma\!\left(\gamma_i+\frac12\right)}
\right)
\notag
\\
&\hspace{-3em} \times\!\!\!\!\!
\sum_{\substack{I\subseteq \{1,\ldots,n\} \\ |I|=d+1,d+3,\ldots}}
a\bigg(
1+\sum_{i\in I}2\gamma_i;
\{2\gamma_i:i\in I\}
\bigg)
\bigg(
\sum_{i\in I}2\gamma_i
\bigg)
b\bigg(
\sum_{i\in I}2\gamma_i-1;
\{2\gamma_i:i\in I^c\}
\bigg)
\label{eq:beta_integral_for_hyp_vol_line_2_hyperbolic}
\end{align}
as well as
\begin{align}
\E \Vol_d^{\mathrm{hyp}}(\sP)
&=
\frac{(-2\pi)^{d/2}}{\pi(d-1)!!}
\Bigg(
\pi
-
\left(
\prod_{i=1}^n
\frac{\Gamma(\gamma_i+1)}{\sqrt{\pi}\,\Gamma\!\left(\gamma_i+\frac12\right)}
\right)
\notag
\\
&\hspace{-3em} \times\!\!\!\!\!
\sum_{\substack{I\subseteq \{1,\ldots,n\} \\ |I|=d-1,d-3,\ldots}}
a\bigg(
1+\sum_{i\in I}2\gamma_i;
\{2\gamma_i:i\in I\}
\bigg)
\bigg(
\sum_{i\in I}2\gamma_i
\bigg)
b\bigg(
\sum_{i\in I}2\gamma_i-1;
\{2\gamma_i:i\in I^c\}
\bigg)
\Bigg).
\label{eq:beta_integral_for_hyp_vol_2_line_2_hyperbolic}
\end{align}
Here, any indeterminate expression of the form
$
0 \cdot b(-1;\{2\gamma_i:i\in I^c\})
$
is understood in the limiting sense described in Lemma~\ref{lemma:limit_if_b_has_pole} below.
\end{theorem}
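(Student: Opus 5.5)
We pass to the limit $\beta=-\frac{d+1}{2}+\eps$, $\eps\downarrow 0$, in \eqref{eq:beta_integral_for_hyp_vol_line_2} and \eqref{eq:beta_integral_for_hyp_vol_2_line_2}, in the same way as in the proof of Theorem~\ref{theo:expected_hyp_volume_beta_polytopes_odd_d}. On the left-hand side, monotone convergence gives $\cI(-\frac{d+1}{2}+\eps)\to \E\Vol_d^{\mathrm{hyp}}(\sP)$; the limit is finite by Step~3 of the proof of Theorem~\ref{theo:expected_beta_volume_beta_polytopes}. On the right-hand side, $d$ is even, so $\Gamma(\beta+1)\to\Gamma(\frac{1-d}{2})$ and $\Gamma(\frac d2+\beta+1)\to\Gamma(\frac12)=\sqrt\pi$, both finite and nonzero. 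The reflection formula gives $\Gamma(\frac12-m)=(-1)^m\pi^{1/2}2^m/(2m-1)!!$ with $m=d/2$, so the prefactor $\pi^{d/2-1}\Gamma(\beta+1)/\Gamma(\frac d2+\beta+1)$ tends to $\pi^{d/2-1}(-2)^{d/2}/(d-1)!!=(-2\pi)^{d/2}/(\pi(d-1)!!)$, which is the constant in the statement. Since the sums over $I$ are finite, it remains to show that each summand converges and to identify its limit.

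Fix $I$ and put $s_I:=\sum_{i\in I}2\gamma_i\ge 0$. The $a$-term is $a(1+2\eps+s_I;\{2\gamma_i:i\in I\})$, whose first argument exceeds the sum of the parameters by $1>0$. By Lemma~\ref{lem:a_b_analytic} it is holomorphic near $\eps=0$ and converges to $a(1+s_I;\dots)$. The linear factor tends to $s_I$. The $b$-term is $b(s_I-1+2\eps;\{2\gamma_i:i\in I^c\})$.

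\emph{Case $s_I>0$.} Then $s_I-1>-1$, so $b$ is continuous there by Lemma~\ref{lem:a_b_analytic}, and the summand converges to the displayed expression. This case covers every $I\neq\varnothing$ with all $\gamma_i>0$. Since $\gamma_i=\beta_i+\frac d2\ge \frac d2-1$, we have $\gamma_i>0$ except when $d=2$ and $\beta_i=-1$.

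\emph{Case $s_I=0$.} This happens when $I=\varnothing$, which occurs only in the second formula with $d$ even and $|I|=d-1,d-3,\dots$ odd, so in fact never. It also happens when $d=2$ and all $i\in I$ have $\beta_i=-1$. In this case the linear factor equals $2\eps\to0$, while $b(-1+2\eps;\cdot)$ diverges, because $\cos^{-1+2\eps}$ is barely integrable at $\pm\pi/2$. This is the indeterminate form $0\cdot b(-1;\cdot)$ that the statement refers to Lemma~\ref{lemma:limit_if_b_has_pole}.

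The main step is to prove that $2\eps\, b(-1+2\eps;\alpha_1,\dots,\alpha_k)$ has a finite limit, and to compute it. I would start from representation \eqref{eq:b:formula_1}, which gives
\[
b(\alpha;\alpha_1,\ldots,\alpha_k)=\int_{-1}^1(1-t^2)^{\frac{\alpha-1}{2}}G(t)\,\dd t,\qquad G(t)=\prod_j\Bigl(\tfrac{1}{2c_{(\alpha_j-1)/2}}+\int_0^t(1-s^2)^{\frac{\alpha_j-1}{2}}\dd s\Bigr),
\]
where $G$ is continuous on $[-1,1]$. As $\alpha\downarrow-1$, the mass of $(\alpha+1)(1-t^2)^{(\alpha-1)/2}$ concentrates at $t=\pm1$, with weight $1$ at each endpoint: indeed $\int_0^1(1-t)^{\eps-1}\dd t=1/\eps$, and the factor $(1+t)^{\eps-1}\to\frac12$ near $t=1$. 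Hence
\[
(\alpha+1)\,b(\alpha;\cdot)\to G(1)+G(-1).
\]
Here $G(-1)=0$, and $G(1)=\prod_j c_{(\alpha_j-1)/2}^{-1}$ by \eqref{eq:c_beta_integrals}. So the indeterminate product has the explicit value $\prod_{i\in I^c}c_{\gamma_i-1/2}^{-1}$, and this value is presumably what Lemma~\ref{lemma:limit_if_b_has_pole} records. Substituting these limits into both representations of Theorem~\ref{theo:expected_beta_volume_beta_polytopes} yields \eqref{eq:beta_integral_for_hyp_vol_line_2_hyperbolic} and \eqref{eq:beta_integral_for_hyp_vol_2_line_2_hyperbolic}.

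The main obstacle I expect is the uniform control near the endpoints in this concentration argument. The remaining work is to verify that all parameters stay in the admissible domains of Definition~\ref{def:a_and_b_quantities}.
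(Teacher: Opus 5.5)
Your overall argument is the paper's. You let $\beta\downarrow-\frac{d+1}{2}$ in \eqref{eq:beta_integral_for_hyp_vol_line_2} and \eqref{eq:beta_integral_for_hyp_vol_2_line_2}, use monotone convergence on the left, compute the prefactor limit $(-2\pi)^{d/2}/(\pi(d-1)!!)$, and pass to the limit term by term. The only exceptional case is $s_I=0$, i.e.\ $d=2$, $I\neq\varnothing$ and $\beta_i=-1$ for all $i\in I$, and it is settled by the limit of $(\alpha+1)\,b(\alpha;\cdot)$ as $\alpha\downarrow-1$.

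For that limit your route differs slightly from the paper's and is valid. You treat $(\alpha+1)(1-t^2)^{(\alpha-1)/2}\,\dd t$ as an approximate identity with mass $1$ at each of $t=\pm1$. This needs only continuity of $G$ on $[-1,1]$, because the mass on $[-1+\delta,1-\delta]$ is $O(\alpha+1)$, so the uniformity issue you flag does not arise. The paper instead splits $b$ into $I_1+I_2+I_3$. It uses $g_i(t)\le C(1+t)^{1/2}$ on $[-1,0]$ and $|g_i(t)-g_i(1)|\le C(1-t)^{1/2}$ on $[0,1]$ to show that $I_1$ and $I_2$ stay bounded, and it handles the empty parameter list separately via the closed form of $b(\alpha;\varnothing)$. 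Your version treats all cases at once, provided $G(-1)$ is evaluated correctly.

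There is one real slip. Your claim $G(-1)=0$ fails when $I^c=\varnothing$: then $G\equiv1$, and the limit is $G(1)+G(-1)=2$, not the empty product $1$. This case does occur in \eqref{eq:beta_integral_for_hyp_vol_line_2_hyperbolic}, for example when $d=2$, $n$ is odd, all $\beta_i=-1$, and $I=\{1,\ldots,n\}$. For $n=3$ the formula reads $\frac{\pi}{2}\cdot(0\cdot b(-1;\varnothing))$, using $a(1;0,0,0)=-\pi^4/4$. With your value the ideal triangle would get area $\pi/2$ instead of $\pi$. The correct value of the indeterminate product is $2$ if $I^c=\varnothing$ and $\prod_{i\in I^c}c_{\gamma_i-1/2}^{-1}$ otherwise, as in Lemma~\ref{lemma:limit_if_b_has_pole}. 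Your general formula $G(1)+G(-1)$ already gives both values.
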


\begin{remark}
The right-hand sides of~\eqref{eq:beta_integral_for_hyp_vol_line_2_hyperbolic} and~\eqref{eq:beta_integral_for_hyp_vol_2_line_2_hyperbolic} may be interpreted as the values of the corresponding $\Theta$-expressions at the first argument $-\frac12$. With this convention,
\begin{align}
\E \Vol_d^{\mathrm{hyp}}(\sP)
&=
\frac{2(-2\pi)^{d/2}}{(d-1)!!}
\sum_{\substack{I\subseteq \{1,\ldots,n\} \\ |I|=d+1,d+3,\ldots}}
\Theta\left(
-\frac12;
\{\gamma_i:i\in I\};
\{\gamma_i:i\in I^c\}
\right)
\label{eq:beta_integral_for_hyp_vol_line_1_hyperbolic}
\\
&=
\frac{2(-2\pi)^{d/2}}{(d-1)!!}
\left(
\frac12
-
\sum_{\substack{I\subseteq \{1,\ldots,n\} \\ |I|=d-1,d-3,\ldots}}
\Theta\left(
-\frac12;
\{\gamma_i:i\in I\};
\{\gamma_i:i\in I^c\}
\right)
\right).
\label{eq:beta_integral_for_hyp_vol_2_line_1_hyperbolic}
\end{align}
\end{remark}

\begin{proof}[Proof of Theorem~\ref{theo:expected_hyp_volume_beta_polytopes_even_d}]
For every real $\beta>-\frac{d+1}{2}$, Equations~\eqref{eq:beta_integral_for_hyp_vol_line_2} and~\eqref{eq:beta_integral_for_hyp_vol_2_line_2} of Theorem~\ref{theo:expected_beta_volume_beta_polytopes} apply. We let $\beta\downarrow -\frac{d+1}{2}$ in these formulas. Recall that $\cI(\beta)$ is defined by~\eqref{eq:I_beta_def_beta_vol}. By monotone convergence,
\[
\E \Vol_d^{\mathrm{hyp}}(\sP)
=
\E\left[\int_{\sP}(1-\|x\|^2)^{-\frac{d+1}{2}}\,\dd x\right]
=
\lim_{\beta\downarrow -\frac{d+1}{2}} \cI(\beta).
\]
The prefactor in~\eqref{eq:beta_integral_for_hyp_vol_line_2} and~\eqref{eq:beta_integral_for_hyp_vol_2_line_2} has the limit
\[
\lim_{\beta\downarrow -\frac{d+1}{2}}
\frac{\pi^{\frac d2-1}\Gamma(\beta+1)}
{\Gamma\!\left(\frac d2+\beta+1\right)}
=
\frac{\pi^{\frac d2-1}\Gamma\!\left(\frac{1-d}{2}\right)}{\Gamma\!\left(\frac12\right)}
=
\frac{(-2\pi)^{d/2}}{\pi(d-1)!!}.
\]
Next, fix one summand in~\eqref{eq:beta_integral_for_hyp_vol_line_2}. As $\beta\downarrow -\frac{d+1}{2}$, the first argument of the $a$-function tends to $1+\sum_{i\in I}2\gamma_i$,
and since $1+\sum_{i\in I}2\gamma_i>\sum_{i\in I}2\gamma_i$, Lemma~\ref{lem:a_b_analytic} shows that
$$
a\bigg(
2\beta+d+2+\sum_{i\in I}2\gamma_i;
\{2\gamma_i : i\in I\}
\bigg)
\to
a\bigg(
1+\sum_{i\in I}2\gamma_i;
\{2\gamma_i : i\in I\}
\bigg).
$$
Now consider the factor involving $b$. If $\sum_{i\in I}2\gamma_i>0$, then $\sum_{i\in I}2\gamma_i-1>-1$, and Lemma~\ref{lem:a_b_analytic} implies that
$$
\bigg(2\beta+d+1+\sum_{i\in I}2\gamma_i\bigg)
b\bigg(2\beta+d+\sum_{i\in I}2\gamma_i;\{2\gamma_i:i\in I^c\}\bigg)
\to
\bigg(\sum_{i\in I}2\gamma_i\bigg)
b\bigg(\sum_{i\in I}2\gamma_i-1;\{2\gamma_i:i\in I^c\}\bigg).
$$
The only possible exceptional case is therefore $\sum_{i\in I}2\gamma_i=0$. Since $\gamma_i=\beta_i+\frac d2\ge 0$ and $I\neq \varnothing$, this can happen only if $\gamma_i=0$ for all $i\in I$, which is possible only when $d=2$ and $\beta_i=-1$ for all $i\in I$. In this case, the limit of the above factor  exists and is finite by Lemma~\ref{lemma:limit_if_b_has_pole}.

Thus every summand in~\eqref{eq:beta_integral_for_hyp_vol_line_2} has a finite limit as $\beta\downarrow -\frac{d+1}{2}$. Since the sum is finite, we may pass to the limit term by term, which yields~\eqref{eq:beta_integral_for_hyp_vol_line_2_hyperbolic}.

The proof of~\eqref{eq:beta_integral_for_hyp_vol_2_line_2_hyperbolic} is analogous, starting from~\eqref{eq:beta_integral_for_hyp_vol_2_line_2}. Note that the index set in~\eqref{eq:beta_integral_for_hyp_vol_2_line_2} does not contain $I=\varnothing$, because $d$ is even and hence the cardinalities $d-1,d-3,\ldots$ are positive odd integers.
\end{proof}

\begin{lemma}[Pole of $b$ at $\alpha=-1$]\label{lemma:limit_if_b_has_pole}
Let $d \in \mathbb{N}_0$, and let $\alpha_1,\ldots,\alpha_d \ge 0$. Then
\begin{equation}\label{eq:limit_if_b_has_pole}
\lim_{\alpha \downarrow -1} (\alpha+1)\,b(\alpha;\alpha_1,\ldots,\alpha_d)
=
\begin{cases}
2, & \text{if } d=0,\\[1mm]
\displaystyle
\prod_{i=1}^d \frac{1}{c_{\frac{\alpha_i-1}{2}}}
=
\prod_{i=1}^d
\frac{\sqrt{\pi}\,\Gamma\!\left(\frac{\alpha_i+1}{2}\right)}
{\Gamma\!\left(\frac{\alpha_i+2}{2}\right)},
& \text{if } d\ge 1.
\end{cases}
\end{equation}
\end{lemma}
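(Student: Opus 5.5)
The plan is to study the behaviour of the integrand of $b$ near the endpoints $\pm\pi/2$ as $\alpha\downarrow -1$. We have
\[
b(\alpha;\alpha_1,\ldots,\alpha_d)=\int_{-\pi/2}^{\pi/2}\cos^\alpha(x)\,G(x)\,\dd x,
\qquad
G(x):=\prod_{j=1}^d F_{\alpha_j}(x).
\]
The function $G$ is continuous on $[-\pi/2,\pi/2]$, with
\[
G(-\pi/2)=0,
\qquad
G(\pi/2)=\prod_{j=1}^d c_{(\alpha_j-1)/2}^{-1}.
\]
If $d=0$, then $G\equiv 1$ and $G(-\pi/2)=1$. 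The mass of $\cos^\alpha$ concentrates at the two endpoints as $\alpha\downarrow-1$. Each endpoint should contribute the value of $G$ there, because $(\alpha+1)\int_{0}^{\pi/2}\cos^\alpha(x)\,\dd x\to 1$. This gives $G(-\pi/2)+G(\pi/2)$, which equals $2$ for $d=0$ and $\prod_j c^{-1}_{(\alpha_j-1)/2}$ for $d\ge1$, as claimed.

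To make this rigorous, the first step is the normalisation. By~\eqref{eq:c_beta_integrals} (or Remark~\ref{rem:a_b_for_d_equal_0_1}, $d=0$),
\[
(\alpha+1)\int_{-\pi/2}^{\pi/2}\cos^\alpha(x)\,\dd x=\frac{(\alpha+1)\sqrt\pi\,\Gamma\!\left(\frac{\alpha+1}2\right)}{\Gamma\!\left(\frac{\alpha+2}2\right)}=\frac{2\sqrt\pi\,\Gamma\!\left(\frac{\alpha+3}2\right)}{\Gamma\!\left(\frac{\alpha+2}2\right)}\to 2 .
\]
This already settles $d=0$. By symmetry, each half-interval $[0,\pi/2]$ and $[-\pi/2,0]$ therefore carries weighted mass tending to $1$.

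The second step handles $d\ge1$. Split
\[
(\alpha+1)\int_0^{\pi/2}\cos^\alpha(x)\,G(x)\,\dd x
=G(\pi/2)\,(\alpha+1)\int_0^{\pi/2}\cos^\alpha(x)\,\dd x+(\alpha+1)\int_0^{\pi/2}\cos^\alpha(x)\,\bigl(G(x)-G(\pi/2)\bigr)\,\dd x .
\]
Do the analogous split on $[-\pi/2,0]$ with $G(-\pi/2)=0$. The first term tends to $G(\pi/2)$. For the remainder, fix $\delta>0$ and use continuity of $G$ at the endpoint to bound $|G(x)-G(\pi/2)|\le\eta$ on $[\pi/2-\delta,\pi/2]$. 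That part then contributes at most $\eta\cdot(\alpha+1)\int_0^{\pi/2}\cos^\alpha\to\eta$.

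On $[0,\pi/2-\delta]$, $\cos^\alpha(x)$ is bounded uniformly for $\alpha\in(-1,0]$, for instance by $(\sin\delta)^{-1}$. Hence that part is $O(\alpha+1)\to0$. Letting $\eta\to 0$ finishes the argument.

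The only point needing care is continuity of $G$ at $\pm\pi/2$, which follows from the definition of $F_\beta$ for $\beta\ge0$. Alternatively, one could use representation~\eqref{eq:b:formula_1} with $t=\sin x$; there the weight $(1-t^2)^{(\alpha-1)/2}$ on $[-1,1]$ concentrates at $t=\pm1$ in the same way. I expect no real obstacle: the main step is the concentration (approximate identity) estimate above, and it is elementary.
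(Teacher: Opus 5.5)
Your proof is correct and uses the same decomposition as the paper. You split at the midpoint, subtract the endpoint value of $\prod_j F_{\alpha_j}$ on the right half, and read off the main term from $(\alpha+1)\int_0^{\pi/2}\cos^\alpha(x)\,\dd x\to 1$. The only difference is the remainder: you work with $\cos^\alpha$ directly and show $(\alpha+1)\times(\text{remainder})\to 0$ by an $\eta$--$\delta$ continuity argument, whereas the paper passes to the $t=\sin x$ form \eqref{eq:b:formula_1} and uses $g_i(t)\le C(1+t)^{1/2}$ and $|g_i(t)-g_i(1)|\le C(1-t)^{1/2}$ to show the remainder integrals $I_1,I_2$ stay bounded. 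The paper's version thus gives slightly more (the remainder is $O(1)$ rather than $o((\alpha+1)^{-1})$), while yours needs only continuity at the endpoints.
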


\begin{proof}
For $d=0$, the explicit formula $b(\alpha;\varnothing) = c_{(\alpha-1)/2}^{-1}$ from~\eqref{eq:b_for_d_0_and_1} entails
\begin{align*}
\lim_{\alpha \downarrow -1} (\alpha +1) b(\alpha; \varnothing)
=
\lim_{\alpha \downarrow -1} (\alpha + 1) \frac{\sqrt \pi \Gamma \left( \frac{\alpha+1}{2} \right)}{\Gamma \left( \frac{\alpha+2}{2} \right)}
=
\lim_{\alpha \downarrow -1} 2 \frac{\sqrt \pi \Gamma \left( \frac{\alpha+1}{2}+1 \right)}{\Gamma \left( \frac{\alpha+2}{2} \right)}
=
2.
\end{align*}
Now, let $d\geq 1$. Fix $\alpha_1, \ldots, \alpha_d \geq 0$ and define
$$
g_i(t)
:=
\frac{1}{2c_{\frac{\alpha_i-1}2}}
+\int_0^t (1-s^2)^{\frac{\alpha_i-1}{2}}\dd s
=
\int_{-1}^t (1-s^2)^{\frac{\alpha_i-1}{2}}\dd s,
\qquad t\in[-1,1].
$$
The second equality holds because $\int_{-1}^0 (1-s^2)^{(\alpha_i-1)/2} \dd s = \frac 12 B(\frac 12,\frac{\alpha_i+1}2)$.
By~\eqref{eq:b:formula_1}, for $\alpha>-1$,
\begin{multline*}
b(\alpha;\alpha_1,\ldots,\alpha_d)
=
\int_{-1}^{1}(1-t^2)^{\frac{\alpha-1}{2}} \prod_{i=1}^d g_i(t) \dd t
=
\underbrace{\int_{-1}^{0}(1-t^2)^{\frac{\alpha-1}{2}} \prod_{i=1}^d g_i(t) \dd t}_{=: I_1(\alpha)}
\\
+
\underbrace{\int_{0}^{1}(1-t^2)^{\frac{\alpha-1}{2}} \left( \prod_{i=1}^d g_i(t) - \prod_{i=1}^d g_i(1) \right) \dd t}_{=: I_2(\alpha)}
+
\underbrace{\left( \prod_{i=1}^d g_i(1) \right) \int_{0}^{1}(1-t^2)^{\frac{\alpha-1}{2}} \dd t}_{=: I_3(\alpha)}.
\end{multline*}

Let us show that $I_1(\alpha)$ and $|I_2(\alpha)|$ stay bounded as $\alpha \downarrow -1$, so that only the third integral contributes to the limit~\eqref{eq:limit_if_b_has_pole}.
Using $(1-s^2) = (1-s)(1+s)$ yields that, for all $i=1,\ldots, d$, there exists $c>0$ such that
$$
g_i(t)
=
\int_{-1}^t (1-s^2)^{\frac{\alpha_i-1}2} \dd s
\leq
c \int_{-1}^t (1+s)^{\frac{\alpha_i-1}2} \dd s
=
\frac{2c}{\alpha_i+1} (1+t)^{\frac{\alpha_i+1}{2}}
\leq
\frac{2c}{\alpha_i+1} (1+t)^{\frac{1}{2}},
$$
for all $t\in [-1,0]$.
It follows that there exists $C_1>0$ such that for all $\alpha\in (-1,0)$, one has
\begin{align*}
I_1(\alpha)
\leq
C_1  \int_{-1}^0 (1+t)^{\frac{\alpha-1}{2}+\frac d2} \dd t
\leq
C_1  \int_{-1}^0 (1+t)^{\frac{d-2}2} \dd t
<\infty.
\end{align*}

Similarly, for $I_2(\alpha)$ we have $|g_i(t)-g_i(1)|\leq \frac{2c}{\alpha_i+1}  (1-t)^{\frac12}$ for all $t\in [0,1]$ and all $i=1,\ldots, d$. Hence,
$$
\left|\prod_{i=1}^d g_i(t) - \prod_{i=1}^d g_i(1)\right|
=
\sum_{j=1}^d \left( \prod_{i=1}^{j-1} g_i(t) \right) |g_j(t)-g_j(1)| \left( \prod_{i=j+1}^d g_i(1) \right)
\leq
C_2(1-t)^{\frac12},
$$
for all $t \in [0,1]$.
Analogously to $I_1(\alpha)$, it follows that $|I_2(\alpha)|$ stays bounded as $\alpha \downarrow -1$.
Hence, the limit~\eqref{eq:limit_if_b_has_pole} reduces to
\begin{align*}
&\lim_{\alpha \downarrow -1} (\alpha +1) b(\alpha; \alpha_1, \ldots, \alpha_d)
=
\lim_{\alpha \downarrow -1} (\alpha +1) I_3(\alpha)
=
\left( \prod_{i=1}^d g_i(1) \right) \lim_{\alpha \downarrow -1} (\alpha +1)  \int_{0}^{1}(1-t^2)^{\frac{\alpha-1}{2}} \dd t\\
=&
\left( \prod_{i=1}^d \frac{1}{c_{\frac{\alpha_i-1}{2}}} \right) \lim_{\alpha \downarrow -1} (\alpha +1) \frac{\sqrt \pi \Gamma \left( \frac{\alpha+1}{2} \right)}{2 \Gamma \left( \frac{\alpha+2}{2} \right)}
=
\prod_{i=1}^d \frac{1}{c_{\frac{\alpha_i-1}{2}}},
\end{align*}
completing the proof of~\eqref{eq:limit_if_b_has_pole} in the case $d\geq 1$.
\end{proof}

\subsection{Random beta simplices}

The preceding results simplify considerably in the case $n=d+1$, that is, for random simplices. We record the resulting formulas in the following  two corollaries.

\begin{corollary}[$n=d+1$: expected beta integrals of random beta simplices]\label{cor:expected_integral_simplex}
Let $d \ge 2$, and let $X_1,\ldots,X_{d+1}$ be independent random points with distributions $X_i \sim f_{d,\beta_i}$, where $\beta_1,\ldots,\beta_{d+1} \ge -1$. Put $\gamma_i \coloneqq \beta_i + \frac d2$ for $i=1,\ldots,d+1$, and consider the random simplex $\sS \coloneqq [X_1,\ldots,X_{d+1}]$.
\begin{enumerate}
\item[(i)] For every $\beta \in \CC$ with $\Re \beta > -\tfrac{d+1}{2}$ and $\beta \notin \{-1,-2,\ldots\}$, one has
{\small
\begin{multline*}
\E \left[ \int_\sS \left(1-\|x\|^2\right)^{\beta} \,\dd x \right]
=
\frac{2\pi^{d/2}\Gamma(\beta+1)}{\Gamma\!\left(\frac d2+\beta+1\right)}
\Theta\left(\beta+\frac d2;\{\gamma_1,\ldots,\gamma_{d+1}\};\varnothing\right)
\\
=
\frac{2\Gamma(\beta+1)}{\pi\,\Gamma\!\left(\frac d2+\beta+1\right)}
\frac{\Gamma\!\left(\frac d2+\beta+\sum_{i=1}^{d+1}\gamma_i+\frac32\right)}
{\Gamma\!\left(\frac d2+\beta+\sum_{i=1}^{d+1}\gamma_i+1\right)}
\left(\prod_{i=1}^{d+1}\frac{\Gamma(\gamma_i+1)}{\Gamma\!\left(\gamma_i+\frac12\right)}\right)
a\bigg(2\beta+d+2+\sum_{i=1}^{d+1}2\gamma_i;2\gamma_1,\ldots,2\gamma_{d+1}\bigg).
\end{multline*}
}
\item[(ii)] For every $\beta \in \{-1,-2,-3,\ldots\}$ satisfying $\beta > -\frac{d+1}{2}$, one has
{\small
\begin{multline*}
\E \left[ \int_\sS \left(1-\|x\|^2\right)^{\beta} \,\dd x \right]
=
\frac{4}{\pi\,\Gamma\!\left(\beta+\frac d2+1\right)}
\frac{(-1)^{-\beta-1}}{(-\beta-1)!}
\frac{\Gamma\!\left(\frac d2+\beta+\sum_{i=1}^{d+1}\gamma_i+\frac32\right)}
{\Gamma\!\left(\frac d2+\beta+\sum_{i=1}^{d+1}\gamma_i+1\right)}
\\
\times
\left(\prod_{i=1}^{d+1}\frac{\Gamma(\gamma_i+1)}{\Gamma\!\left(\gamma_i+\frac12\right)}\right)
a'\bigg(2\beta+d+2+\sum_{i=1}^{d+1}2\gamma_i;2\gamma_1,\ldots,2\gamma_{d+1}\bigg).
\end{multline*}
}
\end{enumerate}
\end{corollary}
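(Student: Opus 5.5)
The corollary is obtained by specialising Theorems~\ref{theo:expected_beta_volume_beta_polytopes} and~\ref{theo:expected_hyp_volume_beta_polytopes_at_poles} to $n=d+1$. In that case the index set $\{I\subseteq\{1,\ldots,d+1\}: |I|=d+1,d+3,\ldots\}$ consists of the single set $I=\{1,\ldots,d+1\}$, with $I^c=\varnothing$. Formula~\eqref{eq:beta_integral_for_hyp_vol_line_1} therefore reduces to one $\Theta$-term, which is the first equality in~(i). For the second equality I will start from~\eqref{eq:beta_integral_for_hyp_vol_line_2} and evaluate the $b$-factor with empty multiset using the explicit formula~\eqref{eq:b_for_d_0_and_1}:
\[
b(\alpha;\varnothing)=\frac{\sqrt\pi\,\Gamma\!\left(\frac{\alpha+1}{2}\right)}{\Gamma\!\left(\frac{\alpha+2}{2}\right)},
\qquad
\alpha=2\beta+d+\sum_{i=1}^{d+1}2\gamma_i .
\]
Put $S:=\frac d2+\beta+\sum_i\gamma_i$. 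Then $\alpha=2S$, so the factor $(\alpha+1)\,b(\alpha;\varnothing)$ becomes
\[
(2S+1)\frac{\sqrt\pi\,\Gamma(S+\frac12)}{\Gamma(S+1)}=\frac{2\sqrt\pi\,\Gamma(S+\frac32)}{\Gamma(S+1)},
\]
by the functional equation $\Gamma(S+\frac32)=(S+\frac12)\Gamma(S+\frac12)$.

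Next I collect the constants. The product $\prod_{i=1}^{d+1}\frac{\Gamma(\gamma_i+1)}{\sqrt\pi\,\Gamma(\gamma_i+\frac12)}$ contributes $\pi^{-(d+1)/2}$, the prefactor contributes $\pi^{d/2-1}$, and the $b$-factor contributes $2\sqrt\pi$. The total is
\[
2\,\pi^{d/2-1-(d+1)/2+1/2}=\frac{2}{\pi},
\]
which matches the stated constant in~(i). Validity for all $\Re\beta>-\frac{d+1}{2}$ with $\beta\notin\{-1,-2,\ldots\}$ is inherited directly from Theorem~\ref{theo:expected_beta_volume_beta_polytopes}. The $b$-argument satisfies $\Re\alpha>-1$ because $\Re(2\beta+d)>-1$ and $\gamma_i\ge0$, so $b$ is well defined.

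For~(ii) I will apply~\eqref{eq:beta_integral_for_hyp_vol_poles} with the same single $I$ and perform the same simplification of $(\alpha+1)\,b(\alpha;\varnothing)$. The only difference is the prefactor $2\pi^{d/2-1}$ in place of $\pi^{d/2-1}$, which doubles the constant from $\frac 2\pi$ to $\frac4\pi$. The factor $\frac{(-1)^{-\beta-1}}{(-\beta-1)!}$ and the derivative $a'$ carry over unchanged. Theorem~\ref{theo:expected_hyp_volume_beta_polytopes_at_poles} already guarantees that all terms are finite.

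There is no real obstacle here. The only points needing care are the bookkeeping of the powers of $\pi$ and the factors of $2$, and checking that $\Gamma(S+1)$ is nonzero and finite, which holds since $\Re S>-\frac12$.
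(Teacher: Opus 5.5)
Your proposal is correct and matches the paper's proof: for $n=d+1$ the only admissible index set is $I=\{1,\ldots,d+1\}$, so Theorems~\ref{theo:expected_beta_volume_beta_polytopes} and~\ref{theo:expected_hyp_volume_beta_polytopes_at_poles} each reduce to a single summand. The factor $(\alpha+1)\,b(\alpha;\varnothing)$ then simplifies via \eqref{eq:b_for_d_0_and_1} and $\Gamma(S+\frac32)=(S+\frac12)\Gamma(S+\frac12)$, and your tracking of the powers of $\pi$ and factors of $2$, giving $\frac{2}{\pi}$ in~(i) and $\frac{4}{\pi}$ in~(ii), is accurate.
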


\begin{proof}
\textit{Proof of (i).}
This follows from Theorem~\ref{theo:expected_beta_volume_beta_polytopes}. Indeed, if $n=d+1$, then the sums in~\eqref{eq:beta_integral_for_hyp_vol_line_1} and~\eqref{eq:beta_integral_for_hyp_vol_line_2} contain only one summand, corresponding to $I=\{1,\ldots,d+1\}$. Moreover, by~\eqref{eq:b_for_d_0_and_1},
\[
\bigg(2\beta+d+1+\sum_{i=1}^{d+1}2\gamma_i\bigg)
b\bigg(2\beta+d+\sum_{i=1}^{d+1}2\gamma_i;\varnothing\bigg)
=
\frac{2\sqrt{\pi}\,\Gamma\!\left(\beta+\frac d2+\sum_{i=1}^{d+1}\gamma_i+\frac32\right)}
{\Gamma\!\left(\beta+\frac d2+\sum_{i=1}^{d+1}\gamma_i+1\right)}.
\]

\medskip
\noindent
\textit{Proof of (ii).}
This follows from Theorem~\ref{theo:expected_hyp_volume_beta_polytopes_at_poles}. Again, for $n=d+1$, the sum in~\eqref{eq:beta_integral_for_hyp_vol_poles} contains only one summand, namely the one corresponding to $I=\{1,\ldots,d+1\}$, and the same simplification of the $b$-term as in part~(i) applies.
\end{proof}

\begin{corollary}[$n=d+1$: expected hyperbolic volume of random beta simplices]\label{cor:expected_hyp_volume_beta_simplex}
Let $d \ge 2$, and let $X_1,\ldots,X_{d+1}$ be independent random points with distributions $X_i \sim f_{d,\beta_i}$, where $\beta_1,\ldots,\beta_{d+1} \ge -1$. Put $\gamma_i \coloneqq \beta_i + \frac d2$ for $i=1,\ldots,d+1$, and consider the random simplex $\sS \coloneqq [X_1,\ldots,X_{d+1}]$.
\begin{enumerate}
\item[(i)] If $d$ is \textbf{even}, then
{\small
\begin{align}
\E \Vol_d^{\mathrm{hyp}}(\sS)
&=
\frac{2(-2\pi)^{d/2}}{(d-1)!!}\,
\Theta\left(-\frac12;\{\gamma_1,\ldots,\gamma_{d+1}\};\varnothing\right)
\label{eq:beta_integral_for_hyp_vol_hyperbolic_simplex_even_as_theta}
\\
&=
\frac{2(-2)^{d/2}}{\pi(d-1)!!}
\left(\prod_{i=1}^{d+1}\frac{\Gamma(\gamma_i+1)}{\Gamma\!\left(\gamma_i+\frac12\right)}\right)
\frac{\Gamma\!\left(1+\sum_{i=1}^{d+1}\gamma_i\right)}
{\Gamma\!\left(\frac12+\sum_{i=1}^{d+1}\gamma_i\right)}
a\bigg(1+\sum_{i=1}^{d+1}2\gamma_i;2\gamma_1,\ldots,2\gamma_{d+1}\bigg).
\label{eq:beta_integral_for_hyp_vol_hyperbolic_simplex_even}
\end{align}
}
\item[(ii)] If $d$ is \textbf{odd}, then
{\small
\begin{multline}\label{eq:beta_integral_for_hyp_vol_hyperbolic_simplex_odd}
\E \Vol_d^{\mathrm{hyp}}(\sS)
=
\frac{4(-1)^{\frac{d-1}{2}}}{\pi^{3/2}\left(\frac{d-1}{2}\right)!}
\left(\prod_{i=1}^{d+1}\frac{\Gamma(\gamma_i+1)}{\Gamma\!\left(\gamma_i+\frac12\right)}\right)
\\
\times
\frac{\Gamma\!\left(1+\sum_{i=1}^{d+1}\gamma_i\right)}
{\Gamma\!\left(\frac12+\sum_{i=1}^{d+1}\gamma_i\right)}
a'\bigg(1+\sum_{i=1}^{d+1}2\gamma_i;2\gamma_1,\ldots,2\gamma_{d+1}\bigg).
\end{multline}
}
\end{enumerate}
\end{corollary}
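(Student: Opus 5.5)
The plan is to specialize Theorems~\ref{theo:expected_hyp_volume_beta_polytopes_even_d} and~\ref{theo:expected_hyp_volume_beta_polytopes_odd_d} to $n=d+1$, in the same way that Corollary~\ref{cor:expected_integral_simplex} was obtained from Theorems~\ref{theo:expected_beta_volume_beta_polytopes} and~\ref{theo:expected_hyp_volume_beta_polytopes_at_poles}.

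When $n=d+1$, the sum over $I$ with $|I|\in\{d+1,d+3,\ldots\}$ has exactly one term, namely $I=\{1,\ldots,d+1\}$ with $I^c=\varnothing$. Put $S:=\sum_{i=1}^{d+1}2\gamma_i$. Every $\gamma_i=\beta_i+\frac d2\ge 0$, and at least one $\gamma_i>0$ unless $d=2$ and all $\beta_i=-1$. Suppose first that $S>0$. Then $S-1>-1$, and~\eqref{eq:b_for_d_0_and_1} gives
\[
S\,b(S-1;\varnothing)=\frac{S\sqrt{\pi}\,\Gamma\!\left(\frac S2\right)}{\Gamma\!\left(\frac{S+1}2\right)}=\frac{2\sqrt{\pi}\,\Gamma\!\left(1+\frac S2\right)}{\Gamma\!\left(\frac12+\frac S2\right)}.
\]
Since $\frac S2=\sum_i\gamma_i$, this equals $2\sqrt\pi\,\Gamma(1+\sum\gamma_i)/\Gamma(\frac12+\sum\gamma_i)$. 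The degenerate case $S=0$ (possible only for $d=2$ with all $\beta_i=-1$) uses the convention of Lemma~\ref{lemma:limit_if_b_has_pole}, under which the limit of $(\alpha+1)b(\alpha;\varnothing)$ is $2$. The displayed formula also gives $2\sqrt\pi\,\Gamma(1)/\Gamma(\frac12)=2$ at $S=0$, so one formula covers both cases.

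For even $d$, I substitute this into~\eqref{eq:beta_integral_for_hyp_vol_line_2_hyperbolic}. The factors are $\frac{(-2\pi)^{d/2}}{\pi(d-1)!!}$, then $\pi^{-(d+1)/2}\prod\Gamma(\gamma_i+1)/\Gamma(\gamma_i+\frac12)$, then $2\sqrt\pi$, and a $\pi$-power count gives $\pi^{d/2-1-(d+1)/2+1/2}=\pi^{-1}$. The result is exactly~\eqref{eq:beta_integral_for_hyp_vol_hyperbolic_simplex_even}. The $\Theta$-form~\eqref{eq:beta_integral_for_hyp_vol_hyperbolic_simplex_even_as_theta} follows directly from~\eqref{eq:beta_integral_for_hyp_vol_line_1_hyperbolic} with one summand.

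For odd $d$, I substitute the same simplification into~\eqref{eq:beta_integral_for_hyp_vol_poles_hyperbolic_odd_d}. The constants are $2\pi^{(d-3)/2}$, then $\pi^{-(d+1)/2}$, then $2\sqrt\pi$. They combine to $4\pi^{(d-3)/2-(d+1)/2+1/2}=4\pi^{-3/2}$, which gives~\eqref{eq:beta_integral_for_hyp_vol_hyperbolic_simplex_odd}. In the odd case $d\ge3$, so $S>0$ automatically.

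There is no real obstacle here. The only points needing care are the powers of $\pi$ and the degenerate case $d=2$, $\beta_i\equiv-1$, which I have checked above is consistent.
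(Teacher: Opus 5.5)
Your proposal is correct and follows the paper's proof. Like the paper, you specialize Theorems~\ref{theo:expected_hyp_volume_beta_polytopes_even_d} and~\ref{theo:expected_hyp_volume_beta_polytopes_odd_d} to the single summand $I=\{1,\ldots,d+1\}$, simplify the $b$-term via \eqref{eq:b_for_d_0_and_1}, read off the $\Theta$-form from \eqref{eq:beta_integral_for_hyp_vol_line_1_hyperbolic}, and handle the degenerate case $d=2$, $\beta_1=\beta_2=\beta_3=-1$ through Lemma~\ref{lemma:limit_if_b_has_pole}; your $\pi$-power bookkeeping and the observation that $S>0$ automatically for odd $d$ are both right.
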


\begin{proof}
The proof follows the same pattern as that of Corollary~\ref{cor:expected_integral_simplex}, now using the corresponding formulas for the expected hyperbolic volume.

\medskip
\noindent
\textit{Proof of (i).}
This follows from Theorem~\ref{theo:expected_hyp_volume_beta_polytopes_even_d}. The identity~\eqref{eq:beta_integral_for_hyp_vol_hyperbolic_simplex_even_as_theta} is immediate from~\eqref{eq:beta_integral_for_hyp_vol_line_1_hyperbolic}, because for $n=d+1$ the only admissible subset is $I=\{1,\ldots,d+1\}$.

To obtain~\eqref{eq:beta_integral_for_hyp_vol_hyperbolic_simplex_even}, we specialize~\eqref{eq:beta_integral_for_hyp_vol_line_2_hyperbolic} to $n=d+1$, so that again only the summand with $I=\{1,\ldots,d+1\}$ remains. If
$
\sum_{i=1}^{d+1}2\gamma_i>0,
$
then the first argument of the $b$-term is greater than $-1$, and by~\eqref{eq:b_for_d_0_and_1},
\begin{equation}\label{eq:b_term_simplification}
\bigg(\sum_{i=1}^{d+1}2\gamma_i\bigg)
b\bigg(\sum_{i=1}^{d+1}2\gamma_i-1;\varnothing\bigg)
=
\frac{2\sqrt{\pi}\,\Gamma\!\left(\sum_{i=1}^{d+1}\gamma_i+1\right)}
{\Gamma\!\left(\sum_{i=1}^{d+1}\gamma_i+\frac12\right)}.
\end{equation}
Substituting this identity and simplifying the constants yields~\eqref{eq:beta_integral_for_hyp_vol_hyperbolic_simplex_even}.
It remains to consider the case
$
\sum_{i=1}^{d+1}2\gamma_i=0,
$
which is possible if and only if $d=2$ and $\beta_1=\beta_2=\beta_3=-1$. In this case, according to the convention in Theorem~\ref{theo:expected_hyp_volume_beta_polytopes_even_d} and Lemma~\ref{lemma:limit_if_b_has_pole}, one interprets
$
0\cdot b(-1;\varnothing)=2,
$
which is consistent with~\eqref{eq:b_term_simplification}. This completes the proof of part~(i).

\medskip
\noindent
\textit{Proof of (ii).}
Apply Theorem~\ref{theo:expected_hyp_volume_beta_polytopes_odd_d}, specifically~\eqref{eq:beta_integral_for_hyp_vol_poles_hyperbolic_odd_d}, with $n=d+1$. Then $I=\{1,\ldots,d+1\}$ is again the only admissible subset, and the same simplification of the $b$-term as in~\eqref{eq:b_term_simplification} yields~\eqref{eq:beta_integral_for_hyp_vol_hyperbolic_simplex_odd}.
\end{proof}

\section{Examples}

In this section, we discuss several situations in which the functions $a$ and $b$ simplify.

\subsection{Random ideal polygons in dimension \texorpdfstring{$2$}{2}}

Let $X_1,\ldots,X_n$ be independent random points uniformly distributed on the unit circle $\partial \mathbb{B}^2$, that is, $X_i \sim f_{2,-1}$ for all $i=1,\ldots,n$. Applying~\eqref{eq:beta_integral_for_hyp_vol_2_line_2_hyperbolic} with $d=2$ and
$
\gamma_i=\beta_i+\frac d2=-1+\frac22=0
$ for all  $i=1,\ldots,n$,
we obtain
\begin{align*}
\E \Vol_2^{\mathrm{hyp}}([X_1,\ldots,X_n])
&=
\frac{-2\pi}{\pi}
\left(
\pi
-
\frac{1}{\pi^n}\, n\, a(1;0)\cdot 0 \cdot b_{n-1}(-1;0)
\right)
=
(n-2)\pi.
\end{align*}
Here we interpret the factor $0\cdot b_{n-1}(-1;0)$ as $\pi^{n-1}$ using Lemma~\ref{lemma:limit_if_b_has_pole}, and we use
$
a(1;0)=\frac{\pi^2}{2},
$
which follows from~\eqref{eq:a_for_d_0_and_1}.

This agrees with the classical area formula in the hyperbolic plane: if a hyperbolic $n$-gon has interior angles $\alpha_1,\ldots,\alpha_n$, then its area is
$
(n-2)\pi-\sum_{i=1}^n \alpha_i.
$
In particular, every ideal hyperbolic $n$-gon has area $(n-2)\pi$, and therefore
\[
\Vol_2^{\mathrm{hyp}}([X_1,\ldots,X_n])=(n-2)\pi
\qquad \text{a.s.}
\]

\subsection{Random ideal polytopes in dimension \texorpdfstring{$3$}{3}}

\begin{theorem}[Expected hyperbolic volume of a random ideal polytope in dimension $3$]\label{theo:exp_hyp_vol_random_ideal_polytope_dim_3}
Let $n \ge 4$, and let $X_1,\ldots,X_n$ be independent random points uniformly distributed on the unit sphere $\partial \mathbb{B}^3 \subset \mathbb{R}^3$. Then
\begin{align}\label{eq:exp_hyp_vol_random_ideal_polytope_dim_3}
\E \Vol_3^{\mathrm{hyp}}([X_1,\ldots,X_n])
=
\pi\left(\frac n2 - \sum_{j=1}^{n-1} \frac{1}{j}\right).
\end{align}
\end{theorem}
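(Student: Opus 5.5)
We apply Theorem~\ref{theo:expected_hyp_volume_beta_polytopes_odd_d} with $d=3$ and $\beta_1=\dots=\beta_n=-1$. Then $\gamma_i=\tfrac12$ and $2\gamma_i=1$ for all $i$. The prefactor becomes $-2$, and each factor $\Gamma(\gamma_i+1)/(\sqrt\pi\,\Gamma(\gamma_i+\frac12))$ equals $\tfrac12$. By symmetry the sum over $I$ reduces to a sum over $m=|I|=2k$ with $k\ge 2$, weighted by $\binom{n}{2k}$:
\[
\E\Vol_3^{\mathrm{hyp}}=-2^{1-n}\sum_{k\ge2}\binom{n}{2k}\,a'(2k+1;1^{2k})\cdot 2k\cdot b(2k-1;1^{n-2k}).
\]
Since $F_1(x)=1+\sin x$, we have $F_1(\ii x)=1+\ii\sinh x$, so everything reduces to elementary integrals.

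For the $a$-term, substitute the Gudermannian $\varphi=\operatorname{gd}(x)$, so that $\operatorname{sech}x=\cos\varphi$, $\tanh x=\sin\varphi$ and $\dd\varphi=\operatorname{sech}x\,\dd x$. Then
\[
\cosh^{-\alpha}(x)(1+\ii\sinh x)^m\,\dd x=\cos^{\alpha-m-1}(\varphi)\,e^{\ii m\varphi}\,\dd\varphi,
\]
hence $a(\alpha;1^m)=\int_{-\pi/2}^{\pi/2}\cos^{\alpha-m-1}(\varphi)\,e^{\ii m\varphi}\,\dd\varphi$. Differentiating at $\alpha=m+1$ and keeping only the even part gives
\[
a'(m+1;1^m)=2\int_0^{\pi/2}\log(\cos\varphi)\cos(m\varphi)\,\dd\varphi .
\]
By the classical Fourier expansion $\log\cos\varphi=-\log2-\sum_{j\ge1}(-1)^j\cos(2j\varphi)/j$, this equals $(-1)^{k+1}\pi/(2k)$ for $m=2k$. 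The step from $a$ to $a'$ should be justified by differentiating under the integral sign, exactly as in Lemma~\ref{lem:a_b_analytic}.

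For the $b$-term we use~\eqref{eq:b:formula_1} with $s=\sin x$:
\[
b(2k-1;1^{n-2k})=\int_{-1}^1(1-s)^{k-1}(1+s)^{n-k-1}\,\dd s=2^{n-1}\,\frac{(k-1)!\,(n-k-1)!}{(n-1)!}.
\]
Substituting both evaluations, the formula collapses to
\[
\E\Vol_3^{\mathrm{hyp}}=\pi\sum_{k\ge2}(-1)^k\binom{n}{2k}\frac{(k-1)!\,(n-k-1)!}{(n-1)!}.
\]

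The main obstacle is to show that this sum equals $\tfrac n2-\sum_{j=1}^{n-1}\tfrac1j$. The first thing to try is the beta-integral representation $\frac{(k-1)!(n-k-1)!}{(n-1)!}=\int_0^1 t^{k-1}(1-t)^{n-k-1}\,\dd t$. With $t=\sin^2\theta$ one has
\[
\sum_{k\ge0}(-1)^k\binom n{2k}t^k(1-t)^{n-k}=(1-t)^{n/2}\,\Re\bigl(\sqrt{1-t}+\ii\sqrt t\bigr)^n=\cos^n\theta\,\cos(n\theta).
\]
Subtracting the $k=0$ and $k=1$ terms, $(1-t)^n-\binom n2 t(1-t)^{n-1}$, makes the integral $\int_0^1 t^{-1}(1-t)^{-1}(\cdots)\,\dd t$ convergent. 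That integral then becomes a trigonometric integral of the form $\int_0^{\pi/2}(\cos^n\theta\cos n\theta-\cos^{2n}\theta+\dots)\,\frac{2\,\dd\theta}{\sin\theta\cos\theta}$, which we expect to evaluate to harmonic numbers via $\cos^n\theta\cos n\theta=2^{-n}\sum_j\binom nj\cos(2j\theta)$. As a fallback, we would check small cases and prove the identity by induction on $n$, comparing the differences of consecutive $n$. A consistency check is $n=4$: only $k=2$ contributes, giving $\pi\cdot\frac{1\cdot1}{3!}=\pi/6$, which agrees with $\pi\bigl(2-\tfrac{11}{6}\bigr)$.
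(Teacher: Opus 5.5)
Your reduction to the alternating sum is correct and follows the paper's route. Specializing Theorem~\ref{theo:expected_hyp_volume_beta_polytopes_odd_d} to $d=3$, $\gamma_i=\frac12$ gives exactly \eqref{eq:vol_3_hyp_ideal_poly_proof_1}. Your values $a'(2k+1;1^{2k})=(-1)^{k+1}\pi/(2k)$ and $b(2k-1;1^{n-2k})=2^{n-1}B(k,n-k)$ agree with Lemmas~\ref{lemma:a_with_1_der} and~\ref{lemma:b_with_1}. The paper obtains $a'$ from a closed Gamma formula and a Laurent expansion; your $\log\cos$ Fourier argument is the device the paper uses later in Lemma~\ref{lemma:poly_log_cos}, and both work. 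The resulting sum is exactly the one in Lemma~\ref{lemma:alternating_sum_harmonic}.

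The gap is the final identity $S_n:=\sum_{k\ge2}(-1)^k\binom{n}{2k}B(k,n-k)=\frac n2-\sum_{j=1}^{n-1}\frac1j$. You correctly identify it as the crux, but you do not prove it. Neither suggestion is a proof as it stands. Carrying out the Fourier expansion of $\cos^n\theta\cos(n\theta)$ and $\cos^{2n}\theta$ does not by itself produce harmonic numbers; it leads to binomially weighted sums of odd harmonic numbers, such as $\sum_m\binom{n}{2m}\sum_{l\le m}\frac{1}{2l-1}$, which is just another identity of the same kind. Likewise, ``induction on $n$ comparing differences'' needs an actual way to compute $S_{n+1}-S_n$. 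The paper supplies it via the Wilf--Zeilberger certificate $G(n,\ell)=-(-1)^\ell\binom{n-1}{2\ell-2}\frac{(\ell-1)!(n-\ell)!}{n!}$, which gives $S_{n+1}-S_n=\frac12-\frac1n$.

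Your own integral representation gives this difference in a few lines. Since the $k=1$ term equals $-\binom n2\frac1{n-1}=-\frac n2$, we have $S_n=\frac n2+T_n$, where
\[
T_n:=\sum_{k\ge1}(-1)^k\binom{n}{2k}B(k,n-k)=\int_0^1\frac{g_n(t)-(1-t)^n}{t(1-t)}\,\dd t,\qquad g_n(t):=\sum_{k\ge0}(-1)^k\binom{n}{2k}t^k(1-t)^{n-k}.
\]
The integrand here is a polynomial, so no convergence issues arise. Now use $g_n(\sin^2\theta)=\cos^n\theta\cos(n\theta)$ together with $\cos\theta\cos((n+1)\theta)-\cos(n\theta)=-\sin\theta\sin((n+1)\theta)$. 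This gives
\[
T_{n+1}-T_n=\int_0^1(1-t)^{n-1}\,\dd t-2\int_0^{\pi/2}\cos^{n-1}\theta\,\sin((n+1)\theta)\,\dd\theta=\frac1n-\frac2n=-\frac1n .
\]
The last integral follows from $\frac{\dd}{\dd\theta}\bigl(\cos^n\theta\cos(n\theta)\bigr)=-n\cos^{n-1}\theta\sin((n+1)\theta)$. Since $T_2=-1$, you get $T_n=-\sum_{j=1}^{n-1}\frac1j$ and hence the claim. This is the same recursion $S_{n+1}-S_n=\frac12-\frac1n$ as in the paper, obtained without guessing a certificate.
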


\begin{example}
For $n=4$, the expected hyperbolic volume of the random ideal tetrahedron $[X_1,\ldots,X_4]$ in hyperbolic $3$-space is $\pi/6$. The next few values of~\eqref{eq:exp_hyp_vol_random_ideal_polytope_dim_3} are
\[
\frac{5\pi}{12} \text{ for } n=5, \qquad
\frac{43\pi}{60} \text{ for } n=6, \qquad
\frac{21\pi}{20} \text{ for } n=7, \qquad
\text{and} \qquad
\frac{197\pi}{140} \text{ for } n=8.
\]
\end{example}


To prove Theorem~\ref{theo:exp_hyp_vol_random_ideal_polytope_dim_3}, we apply Theorem~\ref{theo:expected_hyp_volume_beta_polytopes_odd_d} with $d=3$ and $\beta_1=\cdots=\beta_n=-1$. After simplification, this yields
\begin{equation}\label{eq:vol_3_hyp_ideal_poly_proof_1}
\E \Vol_3^{\mathrm{hyp}}([X_1,\ldots,X_n])
=
-2^{1-n}
\sum_{\substack{k=4,6,\ldots\\ k\le n}}
\binom{n}{k}\,
a_k'(k+1;1)\cdot k \cdot b_{n-k}(k-1;1).
\end{equation}
In the next two lemmas, we evaluate $a_d(\alpha;1)$ and $b_d(\alpha;1)$. We then compute $a_k'(k+1;1)$ for $k=2,4,6,\ldots$.

\begin{lemma}\label{lemma:a_with_1}
For $d \in \mathbb{N}_0$ and $\alpha>d$, one has
\[
a_d(\alpha;1)
\equiv
a(\alpha;\underbrace{1,\ldots,1}_{d\text{ times}})
=
\frac{\pi\,\Gamma(\alpha-d)}
{2^{\alpha-d-1}\,\Gamma\!\left(\frac{\alpha+1}{2}\right)\Gamma\!\left(\frac{\alpha+1}{2}-d\right)}.
\]
\end{lemma}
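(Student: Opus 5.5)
First compute $F_1$ explicitly: for $\beta=1$, $F_1(z)=\int_{-\pi/2}^{z}\cos y\,\dd y=\sin z+1$. Hence $F_1(\ii x)=1+\ii\sinh x$, and the defining integral~\eqref{eq:def_a} becomes
\[
a_d(\alpha;1)=\int_{-\infty}^{\infty}\cosh^{-\alpha}(x)\,(1+\ii\sinh x)^d\,\dd x .
\]
The plan is to write $1+\ii\sinh x$ in a form whose $d$-th power combines cleanly with $\cosh^{-\alpha}x$. The key observation is the factorisation
\[
1+\ii\sinh x = 2\cosh\!\Big(\tfrac{x}{2}+\tfrac{\ii\pi}{4}\Big)^{2}\cdot(\text{const})
\]
or, more usefully, $1+\ii\sinh x=\cosh x\cdot(\cosh x)^{-1}(1+\ii\sinh x)$ with $|1+\ii\sinh x|=\cosh x$. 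So we may write $1+\ii\sinh x=\cosh(x)\,e^{\ii\theta(x)}$, where $\theta(x)=\arctan(\sinh x)$ is the Gudermannian $\mathrm{gd}(x)$. This gives
\[
a_d(\alpha;1)=\int_{-\infty}^{\infty}\cosh^{-(\alpha-d)}(x)\,e^{\ii d\,\mathrm{gd}(x)}\,\dd x .
\]

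Next, substitute $\theta=\mathrm{gd}(x)\in(-\pi/2,\pi/2)$. Since $\cosh x=1/\cos\theta$ and $\dd x=\dd\theta/\cos\theta$, we obtain
\[
a_d(\alpha;1)=\int_{-\pi/2}^{\pi/2}\cos^{\alpha-d-1}(\theta)\,e^{\ii d\theta}\,\dd\theta=\int_{-\pi/2}^{\pi/2}\cos^{\alpha-d-1}(\theta)\cos(d\theta)\,\dd\theta ,
\]
where the imaginary part vanishes by symmetry. This converges for $\alpha>d$.

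The remaining step is the classical integral
\[
\int_{0}^{\pi/2}\cos^{\mu-1}\theta\,\cos(b\theta)\,\dd\theta=\frac{\pi\,\Gamma(\mu)}{2^{\mu}\,\Gamma\!\big(\tfrac{\mu+b+1}{2}\big)\Gamma\!\big(\tfrac{\mu-b+1}{2}\big)},\qquad \Re\mu>0
\]
(Gradshteyn--Ryzhik 3.631.9). Take $\mu=\alpha-d$ and $b=d$, and double the result to account for the full interval $(-\pi/2,\pi/2)$. This gives
\[
\frac{\pi\,\Gamma(\alpha-d)}{2^{\alpha-d-1}\,\Gamma\!\big(\tfrac{\alpha+1}{2}\big)\,\Gamma\!\big(\tfrac{\alpha+1}{2}-d\big)},
\]
which is exactly the claim.

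The main obstacle is justifying this classical formula if we do not simply cite it. To prove it, set $\mu>0$, write $\cos^{\mu-1}\theta\,e^{\ii b\theta}=2^{1-\mu}e^{\ii(b-\mu+1)\theta}(1+e^{-2\ii\theta})^{\mu-1}$, and integrate over $\theta\in(-\pi/2,\pi/2)$ as a contour integral on the unit circle. Alternatively, use a beta-integral representation. As a sanity check, the case $d=0$ gives $\sqrt\pi\,\Gamma(\alpha/2)/\Gamma((\alpha+1)/2)$ by the duplication formula, which matches~\eqref{eq:a_for_d_0_and_1}. Since the identity is analytic in $\alpha$, the general case could also be obtained by checking it for integer values and applying Carlson's theorem, but the direct citation is simpler.
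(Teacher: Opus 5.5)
Your proposal is correct and follows essentially the same route as the paper: the identity $F_1(\ii x)=1+\ii\sinh x$, then the substitution $\tan\theta=\sinh x$, which the paper performs in two steps ($u=\sinh x$, then $u=\tan\theta$). This reduces $a_d(\alpha;1)$ to $2\int_0^{\pi/2}\cos(d\theta)\cos^{\alpha-d-1}\theta\,\dd\theta$, which is then evaluated with the Gradshteyn--Ryzhik table integral, giving the stated formula.
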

\begin{proof}
Starting from Definition~\ref{def:a_and_b_quantities} of the function $a$ and using the identity
\[
F_1(\ii x)
=
\frac{1}{2c_0} + \ii \int_0^x \cosh y\,\dd y
=
1+\ii \sinh x,
\]
we obtain
\begin{align*}
a_d(\alpha;1)
&=
\int_{-\infty}^{\infty} \cosh^{-\alpha} (x) \prod_{j=1}^d F_1(\ii x) \dd x
=
\int_{-\infty}^{\infty} \cosh^{-\alpha} (x) (1+\ii \sinh x)^d \dd x\\
&=
\int_{-\infty}^{\infty} (1+u^2)^{-\alpha/2} (1+\ii u)^d \frac{\dd u}{\sqrt{1+u^2}}
=
\int_{-\infty}^{\infty} (1+\ii u)^d (1+u^2)^{-\frac{\alpha+1}2} \dd u \\
&=
\int_{-\pi/2}^{\pi/2} \left(\eee^{\ii \theta} \frac1{\cos \theta} \right)^d \cos^{\alpha+1}(\theta) \frac{\dd \theta}{\cos^2 \theta}
=
\int_{-\pi/2}^{\pi/2} \eee^{\ii d \theta} \cos^{\alpha-1-d} (\theta) \dd \theta,
\end{align*}
with the substitution $u = \sinh x$ and then $u=\tan \theta$. The imaginary part of the latter integrand is an odd function, while the real part is even. It follows that
\begin{align*}
a_d(\alpha;1)
&=
2 \int_0^{\pi/2} \cos( d \theta) \cos^{\alpha-1-d} (\theta) \dd \theta.
\end{align*}
This is a standard integral; applying~\cite[Formula 3.632.5]{gradshteyn_ryzhik_book} with $p=\tfrac{\alpha+1}2$ and $q=\tfrac{\alpha+1}2 -d$ gives
\begin{align*}
a_d(\alpha;1)
&=
2 \frac{\pi}{2^{\alpha-d} (\alpha-d) B\left( \frac{\alpha+1}2, \frac{\alpha+1}2-d \right)},
\end{align*}
where $B(\cdot, \cdot)$ denotes the Euler beta function. Rewriting this in terms of the gamma function completes the proof.
\end{proof}

\begin{lemma}\label{lemma:b_with_1}
For $d\in \N_0$ and $\alpha>-1$,
$$
b_d(\alpha;1)
\equiv
b(\alpha;\underbrace{1,\ldots, 1}_{d \text{ times}})
=
\frac{2^{\alpha+d} \Gamma\left( \frac{\alpha+1}2 \right) \Gamma\left( \frac{\alpha+1}2 +d \right) }{\Gamma(\alpha+d+1)}.
$$
\end{lemma}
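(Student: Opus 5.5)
We will start from the alternative representation~\eqref{eq:b:formula_1} of Remark~\ref{rem:a_b_alternative_formula} with $\alpha_1=\cdots=\alpha_d=1$. Since $(\alpha_j-1)/2=0$ and $c_0=1/2$ by~\eqref{eq:c_and_c_tilde}, each factor in the product becomes
\[
\frac{1}{2c_0}+\int_0^t 1\,\dd s = 1+t ,
\]
so that
\[
b_d(\alpha;1)=\int_{-1}^{1}(1-t^2)^{\frac{\alpha-1}{2}}(1+t)^d\,\dd t
=\int_{-1}^{1}(1-t)^{\frac{\alpha-1}{2}}(1+t)^{\frac{\alpha-1}{2}+d}\,\dd t .
\]
This mirrors the simplification $F_1(\ii x)=1+\ii\sinh x$ used in the proof of Lemma~\ref{lemma:a_with_1}; alternatively one could start from Definition~\ref{def:a_and_b_quantities} directly, where $F_1(x)=1+\sin x$, and substitute $t=\sin x$, which leads to the same integral.

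Next, we substitute $t=2u-1$, $u\in[0,1]$. Then $1-t=2(1-u)$, $1+t=2u$ and $\dd t=2\,\dd u$, which gives
\[
b_d(\alpha;1)=2^{\frac{\alpha-1}{2}+\frac{\alpha-1}{2}+d+1}\int_0^1 (1-u)^{\frac{\alpha-1}{2}}u^{\frac{\alpha-1}{2}+d}\,\dd u
=2^{\alpha+d}\,B\!\left(\tfrac{\alpha+1}{2}+d,\tfrac{\alpha+1}{2}\right).
\]
The beta integral converges precisely because $\alpha>-1$ and $d\ge 0$. Writing the Euler beta function in terms of gamma functions, and noting that the two arguments sum to $\alpha+1+d$, yields exactly the claimed expression.

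There is no real obstacle in this argument. The only points requiring care are the constant bookkeeping, namely $1/(2c_0)=1$, and the exponent of $2$, namely $(\alpha-1)+d+1=\alpha+d$. As a check, the case $d=0$ must reproduce $b(\alpha;\varnothing)=c_{(\alpha-1)/2}^{-1}$ from~\eqref{eq:b_for_d_0_and_1}. Via the duplication formula,
\[
\frac{2^{\alpha}\,\Gamma\!\left(\frac{\alpha+1}{2}\right)^2}{\Gamma(\alpha+1)}
=\frac{\sqrt{\pi}\,\Gamma\!\left(\frac{\alpha+1}{2}\right)}{\Gamma\!\left(\frac{\alpha+2}{2}\right)},
\]
which confirms the normalization.
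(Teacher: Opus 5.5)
Your proof is correct and is essentially the paper's argument: the paper starts from Definition~\ref{def:a_and_b_quantities} with $F_1(x)=1+\sin x$ and substitutes $u=\sin x$, which yields exactly your starting integral $\int_{-1}^{1}(1+u)^d(1-u^2)^{\frac{\alpha-1}{2}}\,\dd u$ from~\eqref{eq:b:formula_1}, and then identifies it as $2^{\alpha+d}B\bigl(\frac{\alpha+1}{2},\frac{\alpha+1}{2}+d\bigr)$. Your constant $1/(2c_0)=1$ and power of $2$ are right, and the $d=0$ check against~\eqref{eq:b_for_d_0_and_1} is a useful addition.
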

\begin{proof}
Definition~\ref{def:a_and_b_quantities} of the function $b$ entails
\begin{align*}
b_d(\alpha;1)
&=
\int_{-\pi/2}^{\pi/2} \cos^\alpha(x) \prod_{j=1}^d F_1(x) \dd x
=
\int_{-\pi/2}^{\pi/2} \cos^\alpha(x) (1+\sin x)^d \dd x\\
&=
\int_{-1}^{+1} (1+u)^d (1-u^2)^{\frac{\alpha-1}2} \dd u
=
2^{\alpha +d} B\left( \frac{\alpha+1}2, \frac{\alpha+1}2+d \right)
\end{align*}
by first substituting $u=\sin x$, then using $1-u^2=(1+u)(1-u)$.
Expressing  the Euler beta function $B(\,\cdot\, , \,\cdot \,)$ in terms of the gamma function completes the proof.
\end{proof}

\begin{lemma}\label{lemma:a_with_1_der}
For every $k=2,4,6,\ldots$ one has $a_k(k+1;1) = 0$ and $a_k'(k+1;1) =\frac{\pi}{k}(-1)^{\frac{k}{2}-1}$.
\end{lemma}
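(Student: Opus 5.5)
The plan is to use the closed form from Lemma~\ref{lemma:a_with_1} directly. For $d=k$ and $\alpha>k$,
\[
a_k(\alpha;1)=\frac{\pi\,\Gamma(\alpha-k)}{2^{\alpha-k-1}\,\Gamma\!\left(\frac{\alpha+1}{2}\right)\Gamma\!\left(\frac{\alpha+1}{2}-k\right)}.
\]
By Lemma~\ref{lem:a_b_analytic}, $a_k(\cdot;1)$ is holomorphic on $\{\Re\alpha>k\}$. The right-hand side is meromorphic, so the identity theorem shows the two agree on this half-plane. The point $\alpha=k+1$ lies in the half-plane, so we may evaluate both $a_k(k+1;1)$ and $a_k'(k+1;1)$ from the formula.

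At $\alpha=k+1$ the factors are:
\begin{itemize}
\item $\Gamma(\alpha-k)=\Gamma(1)=1$;
\item $2^{\alpha-k-1}=1$;
\item $\Gamma\bigl(\frac{\alpha+1}{2}\bigr)=\Gamma\bigl(\frac k2+1\bigr)=(k/2)!$, which is finite and nonzero;
\item $\Gamma\bigl(\frac{\alpha+1}{2}-k\bigr)=\Gamma\bigl(1-\frac k2\bigr)$, which is a pole because $k$ is even, $k\ge2$.
\end{itemize}
Hence $1/\Gamma(1-k/2)=0$, and therefore $a_k(k+1;1)=0$.

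For the derivative, write $a_k(\alpha;1)=G(\alpha)\cdot\bigl(1/\Gamma\bigl(\frac{\alpha+1}{2}-k\bigr)\bigr)$, where
\[
G(\alpha)=\frac{\pi\,\Gamma(\alpha-k)}{2^{\alpha-k-1}\,\Gamma\!\left(\frac{\alpha+1}{2}\right)}
\]
is holomorphic near $k+1$. Since the second factor vanishes at $k+1$, the product rule gives
\[
a_k'(k+1;1)=G(k+1)\cdot\frac{\dd}{\dd\alpha}\frac{1}{\Gamma\bigl(\frac{\alpha+1}{2}-k\bigr)}\Big|_{\alpha=k+1}.
\]
Here $G(k+1)=\pi/(k/2)!$.

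The main computation is the derivative of the reciprocal gamma function at a nonpositive integer. Set $m=k/2-1\in\N_0$, so that $1-k/2=-m$. From $\Gamma(-m+\eps)=\frac{(-1)^m}{m!\,\eps}+O(1)$ (compare~\eqref{eq:gamma_pole_residuum}) we get $1/\Gamma(-m+\eps)=(-1)^m m!\,\eps+O(\eps^2)$. By the chain rule, $\frac{\dd}{\dd\alpha}$ contributes a factor $\frac12$. The derivative therefore equals $\frac12(-1)^{k/2-1}(k/2-1)!$.

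Multiplying,
\[
a_k'(k+1;1)=\frac{\pi}{(k/2)!}\cdot\frac12(-1)^{k/2-1}\Bigl(\frac k2-1\Bigr)!=\frac{\pi}{k}(-1)^{\frac k2-1},
\]
as claimed. The only delicate step is justifying use of the closed form at and near $\alpha=k+1$, which the analytic-continuation remark above settles; the sign and the factor $\frac12$ from the chain rule are the places to double-check.
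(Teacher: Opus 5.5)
Your proposal is correct and follows essentially the same route as the paper: both evaluate the closed form of Lemma~\ref{lemma:a_with_1} at $\alpha=k+1$, observe that $1/\Gamma\bigl(1-\tfrac k2\bigr)=0$, and obtain $a_k'(k+1;1)$ from the first-order term of $1/\Gamma$ at a nonpositive integer. The paper writes this as a Laurent expansion in $\varepsilon=\alpha-(k+1)$ rather than via the product rule, and your constants, sign and chain-rule factor $\tfrac12$ are all right.
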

\begin{proof}
Set $\alpha=k+1+\varepsilon$ with $\eps\in \mathbb C$. Then Lemma~\ref{lemma:a_with_1} gives
\[
a_k(k+1+\varepsilon;1)
=
\pi\,2^{-\varepsilon}
\frac{\Gamma(1+\varepsilon)}
{\Gamma\!\left(\frac k2+1+\frac{\varepsilon}{2}\right)
 \Gamma\!\left(1-\frac k2+\frac{\varepsilon}{2}\right)}.
\]
The right-hand side is a meromorphic function of $\eps$. We compute its Laurent series at $\eps= 0$.
The first three factors are regular near \(\varepsilon=0\), and
\[
2^{-\varepsilon}=1+O(\varepsilon),\qquad
\Gamma(1+\varepsilon)=1+O(\varepsilon),\qquad
\frac{1}{\Gamma\!\left(\frac k2+1+\frac{\varepsilon}{2}\right)}
=
\frac{1}{(k/2)!}+O(\varepsilon),
\]
as $\eps \to 0$. For the last factor, we recall that $k\in \{2,4,6,\ldots\}$, so that
\[
\frac{1}{\Gamma\!\left(1-\frac k2+\frac{\varepsilon}{2}\right)}
=
(-1)^{\frac k2-1}\left(\frac k2-1\right)!\,\frac{\varepsilon}{2}
+O(\varepsilon^2).
\]
Multiplying the expansions gives
\[
a_k(k+1+\varepsilon;1)
=
\pi\Bigl(1+O(\varepsilon)\Bigr)
\Bigl(\frac{1}{(k/2)!}+O(\varepsilon)\Bigr)
\Bigl((-1)^{\frac k2-1}\left(\frac k2-1\right)!\,\frac{\varepsilon}{2}+O(\varepsilon^2)\Bigr)
=
\frac{\pi}{k}(-1)^{\frac k2-1}\varepsilon
+O(\varepsilon^2).
\]
We conclude that  $a_k(k+1;1) = 0$ and $a_k'(k+1;1) =\frac{\pi}{k}(-1)^{\frac{k}{2}-1}$.
\end{proof}

In view of Lemmas~\ref{lemma:b_with_1} and~\ref{lemma:a_with_1_der}, Equation~\eqref{eq:vol_3_hyp_ideal_poly_proof_1} takes the form
$$
\E \Vol_3^{\mathrm{hyp}} ([X_1, \ldots, X_n])
=
\pi \sum_{\substack{k=4,6,\ldots\\ k\leq n}} (-1)^{k/2} \binom{n}{k}
\frac{\Gamma\left( \frac k2 \right) \Gamma\left(n - \frac k2 \right)}{\Gamma\left( n \right)}.
$$
The next lemma simplifies the right-hand side and completes the proof of Theorem~\ref{theo:exp_hyp_vol_random_ideal_polytope_dim_3}.

\begin{lemma}\label{lemma:alternating_sum_harmonic}
For every \(n\geq 2\),
$$\sum_{\ell = 2}^{\left\lfloor \frac n2 \right\rfloor} (-1)^\ell \binom{n}{2\ell}
\frac{(\ell-1)! (n-\ell-1)!}{(n-1)!}
=
\frac n2 - \sum_{j=1}^{n-1}\frac1j.
$$
\end{lemma}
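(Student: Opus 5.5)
The plan is to turn the factorial ratio into a beta integral, sum the resulting binomial series in closed form as a cosine of a multiple angle, and evaluate one trigonometric integral. As a sanity check, for $n=2,3$ the sum on the left is empty, and indeed $n/2-\sum_{j=1}^{n-1} 1/j$ equals $1-1=0$ and $\tfrac32-\tfrac32=0$. So only $n\ge 4$ needs an argument, although the method below works uniformly.

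\textbf{Step 1: beta integral and summation.} I would write
\[
\frac{(\ell-1)!\,(n-\ell-1)!}{(n-1)!}=B(\ell,n-\ell)=\int_0^1 t^{\ell-1}(1-t)^{n-\ell-1}\,\dd t ,
\]
and interchange the finite sum with the integral. This gives $\int_0^1 \frac{R_n(t)}{t(1-t)}\,\dd t$, where
\[
R_n(t)=\sum_{\ell\ge 2}(-1)^\ell\binom{n}{2\ell}t^\ell(1-t)^{n-\ell}.
\]
Extending the sum to all $\ell\ge 0$ produces $\Re\bigl(\sqrt{1-t}+\ii\sqrt t\bigr)^n$. With the substitution $t=\sin^2\theta$, $\theta\in[0,\pi/2]$, this equals $\cos(n\theta)$. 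Subtracting the $\ell=0$ and $\ell=1$ terms then gives
\[
R_n=\cos(n\theta)-\cos^{2n}\theta+\binom n2\sin^2\theta\cos^{2n-2}\theta .
\]
Since $\dd t/(t(1-t))=2\,\dd\theta/(\sin\theta\cos\theta)$, the sum becomes
\[
2\int_0^{\pi/2}\frac{\cos(n\theta)-\cos^{2n}\theta+\binom n2\sin^2\theta\cos^{2n-2}\theta}{\sin\theta\cos\theta}\,\dd\theta .
\]
The integrand is integrable at both endpoints. Near $\theta=0$ the numerator is $O(\theta^4)$, because the $\ell=0,1$ terms exactly cancel the Taylor expansion of $\cos(n\theta)$ through order $\theta^2$. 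Near $\pi/2$ the quantity $R_n(t)/(1-t)$ stays bounded when $n\ge 4$, since every term with $\ell\le n-1$ carries a factor $(1-t)$ and the remaining power of $(1-t)$ in the denominator is harmless.

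\textbf{Step 2: evaluation.} I would split the integral with $\frac{1}{\sin\theta\cos\theta}=\frac{\cos\theta}{\sin\theta}+\frac{\sin\theta}{\cos\theta}$ and regroup the terms so that each piece converges.
\begin{itemize}
\item The piece $\int_0^{\pi/2}\sin\theta\cos^{2n-3}\theta\,\dd\theta=\frac1{2n-2}$ is elementary.
\item For the cotangent piece, I would write $\cos(n\theta)-\cos^{2n}\theta=\bigl(1-\cos^{2n}\theta\bigr)-\bigl(1-\cos n\theta\bigr)$. Expanding $1-\cos^{2n}\theta=\sin^2\theta\sum_{j=0}^{n-1}\cos^{2j}\theta$ gives, after dividing by $\tan\theta$, elementary integrals $\int_0^{\pi/2}\sin\theta\cos^{2j+1}\theta\,\dd\theta=\frac{1}{2j+2}$; these yield harmonic numbers.
\item For the tangent piece, I would similarly write $\cos n\theta$ and $\cos^{2n}\theta$ relative to their values at $\pi/2$.
\item The remaining Dirichlet-kernel-type integrals $\int_0^{\pi/2}(1-\cos n\theta)\cot\theta\,\dd\theta$ and $\int_0^{\pi/2}\cos(n\theta)\tan\theta\,\dd\theta$ (the latter after removing its value at $\pi/2$) I would evaluate through the telescoping identities $\cos((m+1)\theta)-\cos((m-1)\theta)=-2\sin(m\theta)\sin\theta$. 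Combined with $\int_0^{\pi/2}\sin(m\theta)\cos\theta/\sin\theta\ \dd\theta$-type recursions, these produce alternating or half-harmonic sums.
\end{itemize}

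\textbf{Main obstacle and fallback.} The main obstacle is this last bookkeeping: organising the splitting so that every piece converges on its own (no spurious $\log$ divergences at $0$ or $\pi/2$), and checking that the half-harmonic and alternating sums from the $\cos(n\theta)$ integrals combine with the $\frac1{2j+2}$ sums into exactly $\frac n2-\sum_{j=1}^{n-1}\frac1j$. If this becomes messy, I would instead prove the recursion $S(n+1)-S(n)=\frac12-\frac1n$ for the left-hand side $S(n)$, together with $S(3)=0$. For that I would use the same integral representation: apply $\cos((n+1)\theta)=\cos(n\theta)\cos\theta-\sin(n\theta)\sin\theta$ and the matching relations for the subtracted polynomial terms. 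The difference of integrands then reduces to integrals of the form $\int_0^{\pi/2}\sin(n\theta)\,\dd\theta$ and $\int_0^{\pi/2}\cos^{2n-2}\theta\,\dd\theta$-type expressions, which are easier to evaluate.
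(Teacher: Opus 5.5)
There is a genuine error in Step 1, and Step 2 is built on it. In the $\ell$-th summand of $R_n$ the power of $(1-t)$ is $n-\ell$. The $\ell$-th term of $\Re(\sqrt{1-t}+\ii\sqrt t)^n$, however, carries $(1-t)^{(n-2\ell)/2}$, so you have lost the common factor $(1-t)^{n/2}=\cos^n\theta$. The correct identity is
\[
\sum_{\ell\ge 0}(-1)^\ell\binom{n}{2\ell}t^\ell(1-t)^{n-\ell}
=\Re\bigl(1-t+\ii\sqrt{t(1-t)}\bigr)^n
=\Re\bigl(\cos\theta\,\eee^{\ii\theta}\bigr)^n
=\cos^n\theta\,\cos(n\theta).
\]
Your displayed integral is therefore not equal to the sum:
\begin{itemize}
\item For even $n$ it diverges. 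At $\theta=\pi/2$ your numerator tends to $\cos(n\pi/2)=\pm1$, while $\sin\theta\cos\theta$ vanishes linearly. For instance, at $n=4$ the sum is $\frac16$ but your integral is infinite.
\item For $n=3$ it converges, but to $2\log 2-5$ instead of $0$.
\end{itemize}
Your endpoint claims ($O(\theta^4)$ at $0$, boundedness at $\pi/2$) hold for the true $R_n$; in the variable $t$ the integrand is in fact a polynomial. They fail for the formula you wrote, whose numerator is $\frac n2\theta^2+O(\theta^4)$ near $0$, and that inconsistency was the warning sign. Consequently, no arrangement of the cotangent/tangent splitting and Dirichlet-kernel integrals of $\cos(n\theta)$ in Step 2 can give $\frac n2-\sum_{j=1}^{n-1}\frac1j$; the $\log 2$ at $n=3$ already rules it out.

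With the factor $\cos^n\theta$ restored, your strategy works, and your fallback recursion finishes it in a few lines. The $\binom n2$-term contributes $2\binom n2\int_0^{\pi/2}\sin\theta\cos^{2n-3}\theta\,\dd\theta=\frac n2$. What remains is
\[
J_n:=2\int_0^{\pi/2}\frac{\cos^{n-1}\theta\,\bigl(\cos(n\theta)-\cos^n\theta\bigr)}{\sin\theta}\,\dd\theta ,
\]
which converges at both ends. For $w=\cos\theta\,\eee^{\ii\theta}$ one has $w-1=\ii\sin\theta\,\eee^{\ii\theta}$, hence $\Re(w^{n+1}-w^n)=-\sin\theta\cos^n\theta\sin((n+1)\theta)$, and
\[
J_{n+1}-J_n
=2\int_0^{\pi/2}\Bigl(\sin\theta\cos^{2n-1}\theta-\cos^{n-1}\theta\sin\bigl((n+1)\theta\bigr)\Bigr)\dd\theta
=\frac1n-\frac2n=-\frac1n .
\]
Here the second integral equals $\frac1n$ because $\frac{\dd}{\dd\theta}\bigl(\cos^n\theta\cos(n\theta)\bigr)=-n\cos^{n-1}\theta\sin((n+1)\theta)$. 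Since $J_1=0$, you get $J_n=-\sum_{j=1}^{n-1}\frac1j$, and hence $S_n=\frac n2-\sum_{j=1}^{n-1}\frac1j$.

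This is exactly the recursion $S_{n+1}-S_n=\frac12-\frac1n$ that the paper derives from a Wilf--Zeilberger pair, with base value $S_4=\frac16$. The repaired integral argument explains where the $\frac n2$ and the harmonic numbers come from. The WZ certificate is shorter but purely mechanical to check.
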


\begin{proof}
For $n=2,3$, the equation is evident. Let $n\geq 4$. The proof is via a Wilf--Zeilberger pair; see~\cite[Chapter 7]{petkovsek_wilf_zeilberger}. Let us write the sum as
$$
S_n
:=
\sum_{\ell=2}^{\lfloor n/2\rfloor} F(n,\ell),
\qquad
F(n,\ell):=
(-1)^\ell \binom{n}{2\ell}\frac{(\ell-1)!(n-\ell-1)!}{(n-1)!},
$$
for all $n\geq 4$. Using the convention that $\binom{a}{b}=0$ if $a<b$, it follows that $F(n,\ell)=0$ for $\ell > \lfloor \tfrac n2 \rfloor$, and hence
$$
S_{n+1}-S_n
=
\sum_{\ell=2}^{\lfloor \frac{n+1}{2}\rfloor} \left( F(n+1,\ell) - F(n,\ell) \right).
$$
For $n \geq 4$ and $\ell \geq 2$, set
$$
G(n,\ell):=
-(-1)^\ell \binom{n-1}{2\ell-2}\frac{(\ell-1)!(n-\ell)!}{n!}.
$$
A computation shows that $F$ and $G$ form  a Wilf--Zeilberger pair, that is
$$
F(n+1,\ell) - F(n,\ell)
=
G(n,\ell+1) - G(n,\ell)
$$
for all $\ell\in \{2,\ldots, \lfloor \tfrac{n+1}{2} \rfloor\}$. Hence, for all $n\geq 4$,
\begin{align*}
S_{n+1}-S_n
&=
\sum_{\ell=2}^{\lfloor \frac{n+1}{2} \rfloor} \left( F(n+1,\ell) - F(n,\ell) \right)
=
\sum_{\ell=2}^{\lfloor \frac{n+1}{2} \rfloor} \left( G(n,\ell+1) - G(n,\ell) \right)\\
&=
G\left( n, \left\lfloor \frac{n+1}{2} \right\rfloor +1 \right) - G(n,2)
=
0+
\frac{n-2}{2n}
=
\frac 12 - \frac 1n.
\end{align*}
Therefore, for every $n\geq 4$,
\begin{align*}
S_n
=
S_4+\sum_{j=4}^{n-1}(S_{j+1}-S_j)
=
\frac16+\sum_{j=4}^{n-1}\left(\frac12-\frac1j\right)
=
\frac n2-\sum_{j=1}^{n-1}\frac1j,
\end{align*}
and the proof is complete.
\end{proof}

\subsection{Random ideal simplices in arbitrary dimension}

We now specialize our general formulas to the case of random ideal simplices.

\begin{theorem}[Expected hyperbolic volume of a random ideal simplex]\label{theo:expected_hyp_volume_random_ideal_simplex}
Let $d \ge 2$ be an integer, let $X_1,\ldots,X_{d+1}$ be independent random points uniformly distributed on the unit sphere $\partial \mathbb{B}^d \subset \mathbb{R}^d$, and let $\sS \coloneqq [X_1,\ldots,X_{d+1}]$ be their convex hull.
\begin{enumerate}
\item[(i)] If $d$ is \textbf{even}, then
\begin{align}
\E \Vol_d^{\mathrm{hyp}}(\sS)
&=
\frac{2^{1+d/2}}{\pi(d-1)!!}
\frac{\Gamma\left(\frac d2\right)^{d+1}}{\Gamma\left(\frac{d-1}2\right)^{d+1}}
\frac{\Gamma\left(\frac{d(d-1)}2\right)}{\Gamma\left(\frac{d(d-1)-1}2\right)}
\int_{-\infty}^{\infty}
\frac{\left(\int_0^t \sinh^{d-2}(u)\,\dd u\right)^{d+1}}
{(\sinh t)^{d(d-1)-1}}\,\dd t.
\label{eq:exp_volume_hyp_ideal_simplex_even_dimension}
\end{align}

\item[(ii)] If $d$ is \textbf{odd}, then
\begin{equation}\label{eq:exp_volume_hyp_ideal_simplex_odd_dimension}
\E \Vol_d^{\mathrm{hyp}}(\sS)
=
\frac{
2\pi^{\frac{d-1}{2}}
}{
\left(\frac{d-1}{2}\right)!
\binom{d^2-d-2}{\frac{d^2-d-2}{2}}
}
\int_0^1
t^{\frac{d-1}{2}}
\left(
\sum_{j=0}^{\frac{d-3}{2}}
(-1)^j
\binom{d-2}{\frac{d-3}{2}-j}
\, t^j
\right)^{d+1}
\,\dd t.
\end{equation}
\end{enumerate}
\end{theorem}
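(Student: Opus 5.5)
The plan is to specialize Corollary~\ref{cor:expected_hyp_volume_beta_simplex} to $\beta_1=\dots=\beta_{d+1}=-1$. Then $\gamma_i=\frac d2-1$, $2\gamma_i=d-2$, $\sum_i\gamma_i=\frac{d^2-d-2}{2}$, and the relevant first argument of $a$ is
\[
\alpha_0:=1+(d+1)(d-2)=d^2-d-1 .
\]
The gamma prefactors then become $\Gamma(\frac d2)^{d+1}/\Gamma(\frac{d-1}2)^{d+1}$ and $\Gamma(\frac{d(d-1)}2)/\Gamma(\frac{d(d-1)-1}2)$, which already match part~(i). The remaining work is to evaluate $a_{d+1}(\alpha_0;d-2)$ for even $d$ and $a_{d+1}'(\alpha_0;d-2)$ for odd $d$.

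\emph{Even $d$.} Here $d-2$ is an even integer, so $z\mapsto F_{d-2}(\ii z)$ is entire. Also $\alpha_0$ is an odd integer, so $\cosh^{-\alpha_0}(z)$ is meromorphic with a pole at $z=\ii\pi/2$. I would shift the contour in~\eqref{eq:def_a} from $\mathbb R$ to $\mathbb R+\ii\pi/2$, using two facts:
\[
\cosh(t+\ii\pi/2)=\ii\sinh t,
\qquad
F_{d-2}\bigl(\ii(t+\ii\pi/2)\bigr)=F_{d-2}(-\tfrac\pi2+\ii t)=\ii\int_0^t(\ii\sinh s)^{d-2}\,\dd s .
\]
The second holds because $F_{d-2}(-\pi/2)=0$ and $\cos(-\pi/2+\ii s)=\ii\sinh s$. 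The key observation is that $F_{d-2}(\ii z)^{d+1}$ vanishes at $z=\ii\pi/2$ to order $(d-1)(d+1)$, which exceeds $\alpha_0$. So the integrand is holomorphic in the closed strip, and the shifted integrand is regular at $t=0$ (it behaves like $t^{d}$). The integrand also decays at $\pm\infty$ uniformly in the strip, by the bound from Lemma~\ref{lem:a_b_analytic}. Cauchy's theorem then gives
\[
a_{d+1}(\alpha_0;d-2)=(\text{power of }\ii)\int_{-\infty}^{\infty}\frac{\bigl(\int_0^t\sinh^{d-2}u\,\dd u\bigr)^{d+1}}{\sinh^{d(d-1)-1}t}\,\dd t ,
\]
and it remains to collect the signs and powers of $\ii$ and $2$ with the prefactor $\frac{2(-2)^{d/2}}{\pi(d-1)!!}$.

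\emph{Odd $d$.} Put $m=\frac{d-3}{2}$ and substitute $u=\sinh x$. This gives
\[
a_{d+1}(\alpha;d-2)=\int_{\mathbb R}(1+u^2)^{-\frac{\alpha+1}{2}}\,G(u)^{d+1}\,\dd u,
\qquad G(u)=F_{d-2}(\ii x).
\]
Here $G'(u)=\ii(1+\ii u)^m(1-\ii u)^m$, and $G(\ii)=0$ since $u=\ii$ corresponds to $x=\ii\pi/2$, i.e.\ $F_{d-2}(-\pi/2)=0$. Hence
\[
G(u)=(1+\ii u)^{m+1}(1-\ii u)^{m}\,Q\!\left(\frac{1+\ii u}{1-\ii u}\right)
\]
with an explicit polynomial $Q$ of degree $m$, found by integrating the binomial expansion. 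I expect $Q$ to have coefficients $\pm\binom{d-2}{m-j}$, up to normalization. Expanding $G^{d+1}$ then writes $a_{d+1}(\alpha;d-2)$ as a finite sum of integrals $\int(1+\ii u)^{p}(1-\ii u)^{q}(1+u^2)^{-\frac{\alpha+1}2}\,\dd u$. As in Lemma~\ref{lemma:a_with_1} (substitute $u=\tan\theta$, then apply~\cite[3.632.5]{gradshteyn_ryzhik_book}), each has a closed gamma form containing a factor $1/\Gamma(\text{nonpositive integer}+\varepsilon/2)$ at $\alpha=\alpha_0+\varepsilon$. Differentiating at $\varepsilon=0$ exactly as in Lemma~\ref{lemma:a_with_1_der} turns each term into a rational multiple of $\pi$. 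Finally, I would rewrite the resulting finite sum, via Euler beta integrals $\int_0^1 t^{r}(1-t)^{s}\,\dd t$ or directly as $\int_0^1 t^{(d-1)/2+r}\,\dd t$, as $\int_0^1 t^{\frac{d-1}2}\,\tilde Q(t)^{d+1}\,\dd t$, where $\tilde Q(t)=\sum_j(-1)^j\binom{d-2}{m-j}t^j$. The central binomial coefficient $\binom{d^2-d-2}{(d^2-d-2)/2}$ should come out of the ratio $\Gamma(1+\sum\gamma_i)/\Gamma(\frac12+\sum\gamma_i)$ combined with the powers of $2$ from~\cite[3.632.5]{gradshteyn_ryzhik_book}.

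The main obstacle is this last step: turning the derivative computation into the compact $\int_0^1$ form. If the termwise route becomes unwieldy, I would instead try an integral representation directly in $\alpha$. The substitution $w=\frac{1+\ii u}{1-\ii u}$ maps $\mathbb R$ to the unit circle, and the resulting contour integral can be collapsed onto $[0,1]$ by deforming around the branch cut of $w^{\varepsilon/2}$. The derivative at $\varepsilon=0$ then picks up the jump across the cut, a factor $\pi\ii$, which leaves a polynomial integral over $[0,1]$.
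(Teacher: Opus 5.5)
Your proposal is correct. The even case is the paper's argument: specialize Corollary~\ref{cor:expected_hyp_volume_beta_simplex}, use that the numerator vanishes to order $d^2-1>d^2-d-1$ at $\ii\pi/2$, shift the contour to $\R+\ii\pi/2$, and collect the factor $\ii^{d}=(-1)^{d/2}$, which turns $(-2)^{d/2}$ into $2^{d/2}$. One small point: the bound in Lemma~\ref{lem:a_b_analytic} is stated only on $\R$. For uniform decay on the strip you must also bound the vertical piece $\int_\sigma^{\sigma+\ii\tau}\cosh^{d-2}$, which is $O(\eee^{(d-2)|\sigma|})$, as the paper does.

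For odd $d$ you follow the paper up to the representation $a_{d+1}(d^2-d-1+\eps;d-2)=\sum_k q_k\int_{-\pi/2}^{\pi/2}\eee^{\ii N_k\theta}\cos^{\eps}\theta\,\dd\theta$, where $N_k=d+1+2k$ and $Q(w)^{d+1}=\sum_k q_k w^k$. Your factorization of $G$ through its zero of order $m+1$ at $u=\ii$ becomes, after $u=\tan\theta$, exactly Lemma~\ref{lemma:F_odd_on_imaginary_axis}.

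From here the routes diverge. The paper differentiates under the integral and evaluates $\int\eee^{\ii N\theta}\log\cos\theta\,\dd\theta$ via the Fourier series of $\log\cos$ (Lemma~\ref{lemma:poly_log_cos}). You instead use the closed form $\pi\,\Gamma(1+\eps)\big/\bigl(2^{\eps}\,\Gamma(1+\frac{N+\eps}2)\,\Gamma(1+\frac{\eps-N}2)\bigr)$, valid for $\Re\eps>-1$, and differentiate the vanishing factor $1/\Gamma(1-\frac N2+\frac\eps2)$ as in Lemma~\ref{lemma:a_with_1_der}. Both give $(-1)^{N/2-1}\pi/N$. Your version needs only a standard Gamma integral; the paper's trades that for justifying the Fourier expansion.

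The step you flag as the main obstacle is then immediate. Since $1/N_k=\frac12\int_0^1t^{\frac{d-1}2+k}\,\dd t$, the sum collapses to $\frac{(-1)^{(d-1)/2}\pi}{2}\int_0^1t^{\frac{d-1}2}Q(-t)^{d+1}\,\dd t$, so the branch-cut detour is unnecessary.

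What you still owe is $Q$ itself; \emph{expecting} its coefficients is not enough. With $s=(1+\ii u)/2$ one has $G=2^{d-2}\int_0^s t^m(1-t)^m\,\dd t$. The binomial-tail form of the incomplete beta function (the paper's \cite[\S\,8.17.5]{NIST:DLMF}) then gives $Q(w)=B(\frac{d-1}2,\frac{d-1}2)\sum_{r=0}^{m}\binom{d-2}{m-r}w^r$. All these coefficients are positive; the alternating signs in the final formula come from evaluating at $-t$.

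Finally, the powers of $2$ from the duplication formula for $\Gamma(\frac{d^2-d}2)/\Gamma(\frac{d^2-d-1}2)$ are cancelled by this normalization of $Q$. Indeed, $B(\frac{d-1}2,\frac{d-1}2)^{d+1}\bigl(\Gamma(\frac d2)/\Gamma(\frac{d-1}2)\bigr)^{d+1}=(\sqrt\pi\,2^{2-d})^{d+1}$. They are not cancelled by anything from the Gradshteyn--Ryzhik formula, which contributes only a factor $2^{-\eps}\to1$.
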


\begin{example}[Small dimensions]
	In small dimensions, Theorem~\ref{theo:expected_hyp_volume_random_ideal_simplex}  yields the following values of $V_d:= \E \Vol_d^{\mathrm{hyp}} (\sS)$:	
	\[
	V_2=\pi,\qquad V_3=\frac{\pi}{6}, \qquad V_4=\frac{4\pi^2}{3}-\frac{86528}{6615},\qquad V_5=\frac{943\,\pi^2}{942480},
	\]
	\[
	V_6=
\frac{34}{15}\,\pi^3
-
\frac{1166172999537393664}{47992913336092725}\,\pi
+
\frac{7754705186848768}{407510816383125}\,\frac{1}{\pi},
\qquad
	V_7=\frac{6952469612009\,\pi^3}{2292117595080112800}.
	\]
\end{example}

\begin{remark}[$d\to\infty$]
Using the standard Laplace method, it can be shown (both for even and odd $d$) that
\[
\E \Vol_d^{\mathrm{hyp}} (\sS) \sim \frac{\eee^{5/4}}{\sqrt{\pi}}
\left(\frac{\sqrt \eee}{d}\right)^d
\qquad(d\to\infty).
\]
For comparison, \citet{haagerup_munkholm_simplices_max_vol_hyperbolic} have shown that the hyperbolic volume of the \emph{regular} ideal simplex in $\mathbb B^d$ is asymptotically equivalent to $\eee \sqrt d / d!$ as $d\to\infty$.
\end{remark}

\subsubsection{Proof of Theorem~\ref{theo:expected_hyp_volume_random_ideal_simplex}: even dimensions}
To evaluate the expected hyperbolic volume of $\sS$, we take  $\beta_1 = \ldots = \beta_{d+1} =  -1$ in Corollary~\ref{cor:expected_hyp_volume_beta_simplex}.
For even $d$, this gives
\begin{align}\label{eq:pf_hyp_vol_simplex_odd_dim_first_step}
\E \Vol_d^{\mathrm{hyp}} (\sS)
=&
\frac{2(-2)^{d/2}}{\pi (d-1)!!}
\frac{\Gamma\left( \frac d2 \right)^{d+1}}{\Gamma\left( \frac {d-1}2 \right)^{d+1}} \frac{\Gamma\left( \frac{d(d-1)}2 \right)}
{\Gamma\left( \frac{d(d-1)-1}2 \right)}
a_{d+1}\left(d(d-1)-1;d-2\right).
\end{align}
Definition~\ref{def:a_and_b_quantities} of the function $a$ yields
$$
a_{d+1}\left(d(d-1)-1;d-2\right)
=
\int_{-\infty}^{\infty} g(x) \, \dd x,
\qquad
g(x)
:=
\frac{\left(\frac{1}{2 c_{(d-3)/2}} +\ii \int_0^x \cosh^{d-2}(y)\,\dd y\right)^{d+1}}{(\cosh x)^{d(d-1)-1}}.
$$
Let us show that the line of integration can be shifted by $\ii \tfrac\pi2$ without changing the value, i.e.\
\begin{equation}\label{eq:ideal_hyper_simpl_cauchy_shift}
\int_{-\infty}^{\infty} g(x) \dd x
=
\int_{-\infty}^{\infty}g\!\left(x+\ii \frac{\pi}{2}\right)\dd x.
\end{equation}
The first step is to show that the integrand $g$ is holomorphic on the strip $\mathcal D := \{z\in\CC: -\delta < \Im z < \pi/2+ \delta\}$, where $\delta>0$ is sufficiently small, so we can then apply Cauchy's theorem.
Indeed, the function $g$ is meromorphic. Moreover, the only possible singularity of
$g$ in the closed strip $\mathcal D$ occurs at $x=\ii\pi/2$, where $\cosh x=0$.
We first show that this singularity is removable. Indeed,
\begin{equation}\label{eq:pf_hyp_vol_simplex_odd_dim_integral_identity}
\frac{1}{2c_{(d-3)/2}}+\ii\int_0^z \cosh^{d-2}(y)\,\dd y
=
\ii\int_{\ii\pi/2}^z \cosh^{d-2}(y)\,\dd y = O\!\left((z-\ii\pi/2)^{d-1}\right)
\end{equation}
as $z\to \ii\pi/2$, because $\cosh y$ has a simple zero at $y=\ii\pi/2$.
It follows that
$$
\left(\frac{1}{2c_{(d-3)/2}} +\ii\int_0^z \cosh^{d-2}(y)\,\dd y \right)^{d+1}
=
O\!\left((z-\ii\pi/2)^{(d-1)(d+1)}\right),
$$
while on the other hand
$
\cosh^{d(d-1)-1}(z)
\sim
C (z-\ii\pi/2)^{d(d-1)-1}.
$
Together, this gives $g(z)=O((z-\ii\pi/2)^d)$ as $z\to \ii\pi/2$, so $g$ extends holomorphically to  \(z=\ii\pi/2\). Hence $g$ is holomorphic on the whole strip $\{z\in\CC: -\delta \leq \Im z\leq \pi/2+ \delta\}$.

To justify the use of the Cauchy theorem and prove~\eqref{eq:ideal_hyper_simpl_cauchy_shift}, it suffices to check that for all sufficiently large positive $R$,
\begin{equation}\label{eq:g_exp_decay_to_justify_shifting_contour}
\sup_{0\le t\le \pi/2}|g(\pm R+\ii t)|
\leq
C \eee^{-R}.
\end{equation}
There exist constants \(C_1,C_2>0\) such that
$
C_1 \eee^{|\sigma|}
\leq
|\cosh(\sigma+\ii \tau)|
\leq
C_2 \eee^{|\sigma|}
$
for all sufficiently large $|\sigma|$, uniformly in $\tau\in[0,\pi/2]$. Moreover, integrating first horizontally and then vertically, there exists a constant $C_3>0$ such that
$$
\left|\int_0^{\sigma+\ii\tau}\cosh^{d-2}(y)\, \dd y\right|
\leq
C_3 \eee^{(d-2)|\sigma|}
$$
for all sufficiently large $|\sigma|$, again uniformly in $\tau\in[0,\pi/2]$. Therefore, for some constant $C_4>0$,
$$
|g(\sigma+\ii\tau)|
\leq
C_4 \eee^{-(d(d-1)-1)|\sigma|} \eee^{(d+1)(d-2)|\sigma|}
=
C_4 \eee^{-|\sigma|}.
$$
This proves~\eqref{eq:g_exp_decay_to_justify_shifting_contour} and thus~\eqref{eq:ideal_hyper_simpl_cauchy_shift}.

Substituting $x\mapsto x + \ii\frac{\pi}{2}$ in  the definition of $g$ and using~\eqref{eq:pf_hyp_vol_simplex_odd_dim_integral_identity} gives
\begin{align*}
	g\left(x+\ii\frac{\pi}{2}\right)
	=
	\frac{\left( \ii \int_{\ii \pi/2}^{x+\ii\pi/2}\cosh^{d-2}(y)\, \dd y \right)^{d+1}}{{(\ii\sinh x)^{d(d-1)-1}}}
	=
	(-1)^{d/2} \frac{\left(\int_0^x \sinh^{d-2}(u)\, \dd u \right)^{d+1}}{(\sinh x)^{d(d-1)-1}}.
\end{align*}
Thus,
$$
a_{d+1}\left(d(d-1)-1;d-2\right)
=
\int_{-\infty}^{\infty}g\!\left(x+\ii\frac{\pi}{2}\right) \dd x
=
(-1)^{d/2} \int_{-\infty}^{\infty}  \frac{\left(\int_0^x \sinh^{d-2}(u)\, \dd u \right)^{d+1}}{(\sinh x)^{d(d-1)-1}} \, \dd x.
$$
Plugging this into~\eqref{eq:pf_hyp_vol_simplex_odd_dim_first_step} completes the proof.

\subsubsection{Proof of Theorem~\ref{theo:expected_hyp_volume_random_ideal_simplex}: odd dimensions}
Let us sketch the main steps of the proof. The beta distribution $f_{d,-1}$ is the uniform distribution on the $(d-1)$-dimensional unit sphere.  Corollary~\ref{cor:expected_hyp_volume_beta_simplex} (ii) with $\beta_1 = \ldots = \beta_{d+1} = -1$ gives
$$
\E\Vol_d^{\mathrm{hyp}}(\sS)
=
\frac{4 (-1)^{\frac{d-1}{2}}}{\pi^{3/2} \left( \frac{d-1}2\right)!} \left(\frac{\Gamma(\frac d2)}{\Gamma(\frac{d-1}{2})}\right)^{d+1}
\frac{\Gamma\!\left(\frac{d^2-d}{2}\right)} {\Gamma\!\left(\frac{d^2-d-1}{2}\right)}
a'_{d+1}(d^2-d-1;d-2).
$$
Hence the main task is to evaluate the derivative of
$$
a_{d+1}(\alpha;d-2)
=
\int_{-\infty}^{\infty} \cosh^{-\alpha}(u) F_{d-2}(\ii u)^{d+1} \dd u
$$
at $\alpha = d^2-d-1$. The first step is to simplify the term $F_{d-2}(\ii u)$.  This is done, in a slightly more general form, in Lemma~\ref{lemma:F_odd_on_imaginary_axis}. The proof of Theorem~\ref{theo:expected_hyp_volume_random_ideal_simplex} for odd dimension $d$ proceeds, as the proof of Lemma~\ref{lemma:a_with_1}, by the substitution $\tan \theta = \sinh u$. Differentiating with respect to $\alpha$ then yields the additional factor $\log \cos \theta$, and Lemma~\ref{lemma:poly_log_cos} provides a way to deal with the resulting integral. This leads to an explicit formula for
$
a'_{d+1}(d^2-d-1;d-2),
$
which is established, again in a slightly more general form, in Lemma~\ref{lemma:a_prime_repeated_odd_parameters}.

Let $B(\,\cdot\, , \,\cdot\,)$ denote the Euler beta function.  The incomplete beta function is defined by
$$
B_z(a,b)
:=
\int_0^z t^{a-1}(1-t)^{b-1}\,\dd t,
\qquad \text{ for } z\in [0,1), a>0, b>0.
$$

\begin{lemma}\label{lemma:F_odd_on_imaginary_axis}
Let $m\in\{1,2,3,\dots\}$ and define
$$
P_m(z)
:=
\sum_{r=0}^{m-1}\binom{2m-1}{m-1-r}z^r.
$$
If $u\in\R$ and $\theta\in(-\pi/2,\pi/2)$ are related by $\tan\theta=\sinh u$, then
$$
F_{2m-1}(\ii u)
=
B(m,m)\frac{\eee^{\ii \theta}}{\cos^{2m-1}\theta}P_m(\eee^{2\ii \theta}).
$$
\end{lemma}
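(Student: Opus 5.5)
We use \eqref{eq:F_a_imaginary_argument} with $\beta=2m-1$:
\[
F_{2m-1}(\ii u)=\frac{1}{2c_{m-1}}+\ii\int_0^u\cosh^{2m-1}(y)\,\dd y .
\]
We then pass to the variable $\theta$ via $\tan\theta=\sinh u$, as in the proof of Lemma~\ref{lemma:a_with_1}. Since $\cosh u=1/\cos\theta$ and $\dd u=\dd\theta/\cos\theta$, we obtain
\[
\int_0^u\cosh^{2m-1}(y)\,\dd y=\int_0^\theta\cos^{-2m}(\phi)\,\dd\phi .
\]
The constant is $\frac{1}{2c_{m-1}}=\frac{\sqrt\pi\,\Gamma(m)}{2\Gamma(m+\frac12)}$. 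By the duplication formula this equals $2^{2m-2}B(m,m)$, since $B(m,m)=\Gamma(m)^2/\Gamma(2m)$. Thus the claim is equivalent to the elementary identity
\[
2^{2m-2}B(m,m)+\ii\int_0^\theta\frac{\dd\phi}{\cos^{2m}\phi}
=B(m,m)\,\frac{\eee^{\ii\theta}}{\cos^{2m-1}\theta}\,P_m(\eee^{2\ii\theta}),
\qquad \theta\in(-\tfrac\pi2,\tfrac\pi2).
\]

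We prove this identity by differentiation plus one initial value. Write $R(\theta)$ for the right-hand side.

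\emph{Initial value at $\theta=0$.} We have $R(0)=B(m,m)P_m(1)$ and
\[
P_m(1)=\sum_{r=0}^{m-1}\binom{2m-1}{m-1-r}=\sum_{j=0}^{m-1}\binom{2m-1}{j}=2^{2m-2}
\]
by the symmetry of the binomial coefficients. This matches the left-hand side at $\theta=0$.

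\emph{Derivatives.} It remains to show $R'(\theta)=\ii\cos^{-2m}\theta$. Put $w=\eee^{\ii\theta}$, so that $\cos\theta=(w+w^{-1})/2$ and
\[
R=B(m,m)\,2^{2m-1}\,\frac{w^{2m}\,P_m(w^2)}{(1+w^2)^{2m-1}} .
\]
With $\dd/\dd\theta=\ii w\,\dd/\dd w$ and the target $\ii\cos^{-2m}\theta=\ii\,2^{2m}w^{2m}(1+w^2)^{-2m}$, setting $z=w^2$ reduces the claim to the polynomial identity
\[
B(m,m)\Bigl[\bigl(mP_m(z)+zP_m'(z)\bigr)(1+z)-(2m-1)\,z\,P_m(z)\Bigr]=(1+z)\cdot\tfrac12\cdot\ldots
\]
with the constants still to be tracked. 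Equivalently, we must show that $(1+z)(mP_m+zP_m')-(2m-1)zP_m$ is a constant multiple of $(1+z)$, or more precisely equals $\kappa(1+z)$.

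\emph{The polynomial identity.} This is the main computational step. Compare coefficients of $z^r$, using $\binom{2m-1}{m-1-r}$ and Pascal-type relations. We expect the sum to telescope, leaving only the $z^0$ and $z^1$ terms. A cleaner alternative is to write
\[
P_m(z)=\sum_{j}\binom{2m-1}{j}z^{m-1-j}
\]
as the truncated part of $z^{m-1}(1+z^{-1})^{2m-1}$. The symmetric half-sum structure then makes $z^{m}(1+z)^{-(2m-1)}P_m(z)$, expressed in terms of $\theta$, a partial binomial expansion of $\cos^{2m-1}\theta$ divided by $\cos^{2m-1}\theta$. Its derivative should then be checked against the standard reduction formula for $\int\sec^{2m}$. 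The constant $\kappa$ will be fixed by the $z^0$ coefficient, $m\binom{2m-1}{m-1}$. Matching it with $B(m,m)^{-1}$ times the power of $2$ uses $m\binom{2m-1}{m-1}B(m,m)=1$, which is immediate from $B(m,m)=\frac{(m-1)!^2}{(2m-1)!}$.

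We expect the coefficient bookkeeping in this last step to be the main obstacle. If the direct comparison becomes messy, the fallback is induction on $m$ via the recursion $\int\sec^{2m}=\frac{\tan\theta\sec^{2m-2}\theta}{2m-1}+\frac{2m-2}{2m-1}\int\sec^{2m-2}$.
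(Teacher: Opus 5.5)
\paragraph{Gap: the key polynomial identity is misstated and left unproved.}
Your reduction is sound up to the point you call the main step. Using \eqref{eq:F_a_imaginary_argument}, substituting $\tan\phi=\sinh y$, evaluating $\frac{1}{2c_{m-1}}=2^{2m-2}B(m,m)$, checking $P_m(1)=2^{2m-2}$ at $\theta=0$, and writing $R$ in terms of $w=\eee^{\ii\theta}$ are all correct.

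The problem is the identity you then set out to prove. Carry out your own differentiation with $z=w^2$ and $\frac{\dd}{\dd\theta}=2\ii z\frac{\dd}{\dd z}$. This gives
\[
R'(\theta)=\ii\,2^{2m}B(m,m)\,\frac{z^m}{(1+z)^{2m}}\Bigl[(1+z)\bigl(mP_m(z)+zP_m'(z)\bigr)-(2m-1)zP_m(z)\Bigr],
\qquad
\frac{\ii}{\cos^{2m}\theta}=\ii\,2^{2m}\frac{z^m}{(1+z)^{2m}}.
\]
So you must show that the bracket equals the \emph{constant} $1/B(m,m)$, not $\kappa(1+z)$. Already for $m=1$ ($P_1=1$) the bracket is $1$, and for $m=2$ ($P_2=3+z$) it is $6$. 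An argument aimed at $\kappa(1+z)$ would therefore fail. Your expectation that the $z^0$ and $z^1$ terms survive is also wrong: only the constant term survives. As written, the decisive step is left as ``we expect'' plus two unexecuted fallbacks, so the proposal does not yet prove the lemma.

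\paragraph{Fix, and comparison with the paper.}
The fix is a one-line coefficient comparison. With $a_r=\binom{2m-1}{m-1-r}$, the bracket equals
\[
\sum_{r=0}^{m-1}(m+r)a_rz^r-\sum_{r=0}^{m-1}(m-1-r)a_rz^{r+1}.
\]
\begin{itemize}
\item The $z^m$ coefficient vanishes, because of the factor $m-1-r$ at $r=m-1$.
\item For $1\le s\le m-1$, the $z^s$ coefficient is $(m+s)\binom{2m-1}{m-1-s}-(m-s)\binom{2m-1}{m-s}=0$, since $\binom{2m-1}{m-s}=\frac{m+s}{m-s}\binom{2m-1}{m-1-s}$.
\item The constant term is $m\binom{2m-1}{m-1}=\frac{(2m-1)!}{((m-1)!)^2}=1/B(m,m)$.
\end{itemize}
With this, your argument is complete, and it is a genuinely different route from the paper's.

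The paper writes $F_{2m-1}(x)=2^{2m-1}B_{(1+\sin x)/2}(m,m)$ on the real interval. It expands the incomplete beta function by the finite binomial formula for integer parameters quoted from the DLMF, and extends to the slit domain $\sG$ by the identity theorem. It then substitutes $x=\ii u$ using $\frac{1\pm\sin(\ii u)}{2}=\frac{\eee^{\pm\ii\theta}}{2\cos\theta}$. That route explains where $P_m$ comes from. Yours stays on the imaginary axis and needs neither analytic continuation nor the cited formula, but it only verifies a $P_m$ that must be known in advance.
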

\begin{proof}
For $x\in[-\pi/2,\pi/2]$, the substitution $t=\frac{1+\sin y}{2}$ gives
$$
F_{2m-1}(x)
=
\int_{-\pi/2}^x \cos^{2m-1}(y)\,\dd y
=
2^{2m-1} B_{\frac{1+\sin x}{2}}(m,m).
$$
For $z\in [0,1)$,~\cite[§ 8.17.5]{NIST:DLMF} says
\begin{equation*}\label{eq:incomplete_beta_integer}
B_z(m,m)
=
B(m,m) \sum_{r=0}^{m-1} \binom{2m-1}{m-1-r} z^{m+r}(1-z)^{m-1-r},
\end{equation*}
so it follows that
\begin{equation}\label{eq:pf_lemma_F_iu_and_theta}
F_{2m-1}(x)
=
2^{2m-1} B(m,m) \sum_{r=0}^{m-1} \binom{2m-1}{m-1-r} \left(\frac{1+\sin x}{2}\right)^{m+r} \left(\frac{1-\sin x}{2}\right)^{m-1-r}.
\end{equation}
Both sides are holomorphic functions of $x$ in $\CC \bsl ((-\infty, - \pi/2] \cup [\pi/2, +\infty))$; see Definition~\ref{def:F}). In particular the equality hence holds for $x=\ii u, u\in \R$.

Now let $u\in\R$ and $\theta\in(-\pi/2,\pi/2)$ satisfy $\tan\theta=\sinh u$. Then, $\sin(\ii u)=\ii \sinh u=\ii \tan\theta$, and therefore
$$
\frac{1+\sin(\ii u)}{2}
=
\frac{1+\ii\tan\theta}{2}
=
\frac{\eee^{\ii \theta}}{2\cos\theta},
\qquad
\frac{1-\sin(\ii u)}{2}
=
\frac{\eee^{-\ii\theta}}{2\cos\theta}.
$$
In~\eqref{eq:pf_lemma_F_iu_and_theta}, this yields
\begin{align*}
F_{2m-1}(\ii u)
&=
2^{2m-1} B(m,m) \sum_{r=0}^{m-1} \binom{2m-1}{m-1-r} \left(\frac{\eee^{\ii \theta}}{2\cos\theta}\right)^{m+r} \left(\frac{\eee^{-\ii\theta}}{2\cos\theta}\right)^{m-1-r}\\
&=
B(m,m)\frac{\eee^{\ii \theta}}{\cos^{2m-1}\theta} \sum_{r=0}^{m-1} \binom{2m-1}{m-1-r}\eee^{2\ii r\theta}=
B(m,m)\frac{\eee^{\ii \theta}}{\cos^{2m-1}\theta}P_m(\eee^{2\ii \theta}).
\end{align*}
\end{proof}

\begin{lemma}\label{lemma:poly_log_cos}
Let $q\in \{1,2,\ldots\}$ and let $Q$ be a polynomial. Then,
\begin{equation}\label{eq:poly_log_cos}
\int_{-\pi/2}^{\pi/2}
\eee^{2\ii q\theta} Q(\eee^{2\ii \theta}) \log(\cos\theta) \dd\theta
=
\frac{(-1)^{q+1}\pi}{2}
\int_0^1 t^{q-1} Q(-t) \dd t.
\end{equation}
\end{lemma}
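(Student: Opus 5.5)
By linearity it suffices to treat a monomial: we prove that for every integer $p\ge 1$,
\[
J_p:=\int_{-\pi/2}^{\pi/2}\eee^{2\ii p\theta}\log(\cos\theta)\,\dd\theta=\frac{(-1)^{p+1}\pi}{2p}.
\]
Granting this, write $Q(z)=\sum_{r\ge 0}q_r z^r$. The left-hand side of~\eqref{eq:poly_log_cos} then equals $\sum_r q_r J_{q+r}$. On the right-hand side, $\int_0^1 t^{q-1}(-t)^r\,\dd t=(-1)^r/(q+r)$, so
\[
\frac{(-1)^{q+1}\pi}{2}\int_0^1 t^{q-1}Q(-t)\,\dd t=\sum_r q_r\,\frac{(-1)^{q+r+1}\pi}{2(q+r)}=\sum_r q_r J_{q+r}.
\]
Hence the lemma follows once the formula for $J_p$ is established. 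Note that $q\ge 1$ ensures every exponent $p=q+r$ appearing is at least $1$, so the constant Fourier mode, where $\int\log\cos=-\pi\log 2$, never enters.

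To compute $J_p$, use the classical Fourier expansion, valid for $|\theta|<\pi/2$:
\[
\log(\cos\theta)=-\log 2-\sum_{k\ge1}\frac{(-1)^k}{k}\cos(2k\theta).
\]
This comes from taking the real part of $\log(1+\eee^{2\ii\theta})=\log(2\cos\theta)+\ii\theta$ and expanding $\log(1+w)$ with $|w|=1$, $w\ne -1$.

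Integrate this against $\eee^{2\ii p\theta}$ over an interval of length $\pi$. The functions $\eee^{2\ii j\theta}$ are orthogonal there, and $\cos(2k\theta)=\tfrac12\bigl(\eee^{2\ii k\theta}+\eee^{-2\ii k\theta}\bigr)$. The constant term contributes nothing, since $p\neq0$, and only the $k=p$ term survives:
\[
J_p=-\frac{(-1)^p}{p}\cdot\frac12\cdot\pi=\frac{(-1)^{p+1}\pi}{2p}.
\]
This is the claimed value.

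The only delicate point is justifying the termwise integration, because the Fourier series converges only conditionally and $\log\cos$ has integrable singularities at $\pm\pi/2$. There are two ways to handle this. The first is to note that $\log\cos\theta\in L^2(-\pi/2,\pi/2)$, so its Fourier coefficients are given by inner products and the series converges in $L^2$. Pairing with the bounded function $\eee^{2\ii p\theta}$ is then legitimate.

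The second is to replace $\eee^{2\ii\theta}$ by $r\eee^{2\ii\theta}$ with $0<r<1$. The expansion of $\log|1+r\eee^{2\ii\theta}|$ then converges uniformly, so the coefficient is exactly $(-1)^{p+1}r^p/(2p)\cdot\pi$. Finally let $r\uparrow 1$ using dominated convergence, with $|\log|1+r\eee^{2\ii\theta}||\le C+|\log\cos\theta|$, which is integrable.

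Either route completes the proof. The $L^2$ argument is the shortest to write.
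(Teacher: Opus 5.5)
Your proposal is correct and follows essentially the same route as the paper: reduce to monomials $Q(x)=x^k$, expand $\log\cos\theta$ in its Fourier series $-\log 2+\sum_{n\ge1}\frac{(-1)^{n+1}}{n}\cos(2n\theta)$, and use orthogonality so that only the mode $n=q+k$ survives, giving $\frac{(-1)^{q+k+1}\pi}{2(q+k)}$ on both sides.

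Your justification of the termwise integration, which the paper leaves implicit, is a welcome addition. The Abel route with $r\uparrow 1$ is the fully self-contained one. The $L^2$ route additionally needs the a.e. pointwise convergence of the series to $\log(2\cos\theta)$ in order to identify its $L^2$ limit with $\log(2\cos\theta)$.
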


\begin{proof}
By linearity, it suffices to prove~\eqref{eq:poly_log_cos} for monomials $Q(x) = x^k$ with $k=0,1,2,\ldots$.

On the left-hand side this yields
\begin{align*}
\int_{-\pi/2}^{\pi/2}
\eee^{2\ii q\theta} Q(\eee^{2\ii \theta}) \log(\cos\theta)\dd\theta
=
\int_{-\pi/2}^{\pi/2} \eee^{2\ii (q+k)\theta} \log(\cos\theta) \dd \theta
=
\int_{-\pi/2}^{\pi/2} \cos(2(q+k)\theta) \log(\cos\theta) \dd \theta.
\end{align*}
Indeed, the imaginary part vanishes because $\sin(2m\theta)\log(\cos\theta)$ is an odd function. Now, rewrite this with the Fourier series
$$
\log \cos \theta
=
-\log 2 + \sum_{n=1}^{\infty} \frac{(-1)^{n+1}}{n} \cos(2n\theta),
\qquad -\frac{\pi}{2}<\theta<\frac{\pi}{2}.
$$
Since $q+k \geq 1$, a computation shows that, for $n\in \{1,2,3,\ldots\}$, we have
$$
\int_{-\pi/2}^{\pi/2} \cos(2(q+k)\theta) \dd \theta
=
0,
\qquad
\int_{-\pi/2}^{\pi/2} \cos(2(q+k)\theta) \cos(2n\theta) \dd \theta
=
\begin{cases}
0, &\text{ if } n\neq q+k,\\
\frac{\pi}2, &\text{ if } n = q+k.
\end{cases}
$$
Together, the left-hand side is
$$
\int_{-\pi/2}^{\pi/2}
\eee^{2\ii q\theta} Q(\eee^{2\ii \theta}) \log(\cos\theta)\dd\theta
=
\frac{(-1)^{q+k+1}\pi}{2(q+k)}.
$$
Finally, evaluating the right-hand side with $Q=x^k$, gives the same result:
$$
\frac{(-1)^{q+1}\pi}{2}
\int_0^1 t^{q-1} Q(-t) \dd t
=
\frac{(-1)^{q+k+1}\pi}{2} \int_0^1 t^{q+k-1} \dd t
=
\frac{(-1)^{q+k+1}\pi}{2(q+k)}.
$$
\end{proof}

\begin{lemma}\label{lemma:a_prime_repeated_odd_parameters}
Let $m,q\in \{1,2,3,\ldots \}$.
Then,
\begin{equation}\label{eq:a_prime_repeated_odd_parameters}
a'_{2q}(2q(2m-1)+1;2m-1)
=
B(m,m)^{2q}\frac{(-1)^{q+1}\pi}{2}
\int_0^1 t^{q-1} \left( \sum_{r=0}^{m-1}(-1)^r \binom{2m-1}{m-1-r}t^r  \right)^{2q} \dd t.
\end{equation}
\end{lemma}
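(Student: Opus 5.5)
We plan to follow the proof of Lemma~\ref{lemma:a_with_1}: substitute $\tan\theta=\sinh u$, insert the closed form of $F_{2m-1}(\ii u)$ from Lemma~\ref{lemma:F_odd_on_imaginary_axis}, differentiate in $\alpha$, and evaluate the result with Lemma~\ref{lemma:poly_log_cos}.

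\emph{Step 1: the substitution.} For $\Re\alpha>2q(2m-1)$, Definition~\ref{def:a_and_b_quantities} gives
\[
a_{2q}(\alpha;2m-1)=\int_{-\infty}^{\infty}\cosh^{-\alpha}(u)\,F_{2m-1}(\ii u)^{2q}\,\dd u .
\]
Put $\tan\theta=\sinh u$. Then $\cosh u=1/\cos\theta$ and $\dd u=\dd\theta/\cos\theta$. By Lemma~\ref{lemma:F_odd_on_imaginary_axis},
\[
F_{2m-1}(\ii u)^{2q}=B(m,m)^{2q}\,\eee^{2\ii q\theta}\cos^{-2q(2m-1)}(\theta)\,P_m(\eee^{2\ii\theta})^{2q}.
\]
Hence
\[
a_{2q}(\alpha;2m-1)=B(m,m)^{2q}\int_{-\pi/2}^{\pi/2}\eee^{2\ii q\theta}P_m(\eee^{2\ii\theta})^{2q}\cos^{\alpha-1-2q(2m-1)}(\theta)\,\dd\theta .
\]
This representation holds for real $\alpha>2q(2m-1)$. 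The right-hand side converges absolutely and is holomorphic for $\Re\alpha>2q(2m-1)$, which contains the point $\alpha_0:=2q(2m-1)+1$. So the formula holds in a neighbourhood of $\alpha_0$.

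\emph{Step 2: differentiation.} Differentiating under the integral sign produces the factor $\log\cos\theta$. We justify this by dominated convergence: for $\alpha$ near $\alpha_0$ the exponent $\alpha-1-2q(2m-1)$ is near $0$, and $|\log\cos\theta|\cos^{-\delta}\theta$ is integrable on $(-\pi/2,\pi/2)$ for small $\delta>0$. At $\alpha=\alpha_0$ the power of $\cos$ is exactly $0$, so
\[
a_{2q}'(\alpha_0;2m-1)=B(m,m)^{2q}\int_{-\pi/2}^{\pi/2}\eee^{2\ii q\theta}P_m(\eee^{2\ii\theta})^{2q}\log(\cos\theta)\,\dd\theta .
\]

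\emph{Step 3: evaluation.} Apply Lemma~\ref{lemma:poly_log_cos} with $Q=P_m^{2q}$, which is a polynomial, and $q\ge1$. This yields $\frac{(-1)^{q+1}\pi}{2}\int_0^1 t^{q-1}P_m(-t)^{2q}\,\dd t$. Finally, $P_m(-t)=\sum_{r}(-1)^r\binom{2m-1}{m-1-r}t^r$, which is exactly \eqref{eq:a_prime_repeated_odd_parameters}.

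The only delicate point is the domain bookkeeping in Steps 1 and 2. The derivative is taken at the point where the $\cos$ exponent vanishes. Convergence there needs $\Re\alpha>\alpha_0-1$, which is exactly the half-plane of Lemma~\ref{lem:a_b_analytic}, so the substitution and the differentiation are both legitimate.
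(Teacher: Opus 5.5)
Your proposal is correct and follows the paper's proof essentially step for step. The paper also substitutes $\tan\theta=\sinh u$ using Lemma~\ref{lemma:F_odd_on_imaginary_axis}, differentiates in $\alpha$ to produce the factor $\log\cos\theta$ at $\alpha=2q(2m-1)+1$, and then applies Lemma~\ref{lemma:poly_log_cos} with $Q=P_m^{2q}$; your dominated-convergence justification for differentiating under the integral sign supplies a detail the paper leaves implicit.
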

\begin{proof}
By definition,
$$
a_{2q}(\alpha;2m-1)
=
\int_{-\infty}^{\infty} \cosh^{-\alpha}(u) F_{2m-1}(\ii u)^{2q} \dd u.
$$
Now, substitute $\tan\theta=\sinh u$, $\theta \in (-\pi/2, \pi/2)$. For this, apply Lemma~\ref{lemma:F_odd_on_imaginary_axis}, recalling the definition
$$
P_m(z)
:=
\sum_{r=0}^{m-1}\binom{2m-1}{m-1-r}z^r,
$$
and note that
$\cosh u=\tfrac1{\cos \theta}$ and $\dd u=\tfrac{\dd\theta}{\cos \theta}$. Hence, we obtain
$$
a_{2q}(\alpha;2m-1)
=
B(m,m)^{2q} \int_{-\pi/2}^{\pi/2} \eee^{2\ii q\theta}P_m(\eee^{2\ii \theta})^{2q} \cos^{\alpha-2q(2m-1)-1}\theta \dd\theta.
$$
Differentiating with respect to $\alpha$ and evaluating at $\alpha=2q(2m-1)+1$ gives
$$
a'_{2q}(2q(2m-1)+1;2m-1)
=
B(m,m)^{2q} \int_{-\pi/2}^{\pi/2} \eee^{2\ii q\theta} P_m(\eee^{2\ii \theta})^{2q} \log(\cos\theta) \dd\theta.
$$
Now apply Lemma~\ref{lemma:poly_log_cos} with
$
Q(z):=P_m(z)^{2q}.
$
This yields
$$
a'_{2q}(2q(2m-1)+1;2m-1)
=
B(m,m)^{2q}\frac{(-1)^{q+1}\pi}{2} \int_0^1 t^{q-1}P_m(-t)^{2q} \dd t.
$$
This completes the proof.
\end{proof}

We are now ready to prove Theorem~\ref{theo:expected_hyp_volume_random_ideal_simplex} in case of odd dimension.

\begin{proof}[Proof of Theorem~\ref{theo:expected_hyp_volume_random_ideal_simplex} (odd dimension)]
Since the beta distribution $f_{d,-1}$ is the uniform distribution on the $(d-1)$-dimensional unit sphere, we apply~\eqref{eq:beta_integral_for_hyp_vol_hyperbolic_simplex_odd} from Corollary~\ref{cor:expected_hyp_volume_beta_simplex} with $\beta_1 = \ldots = \beta_{d+1} = -1$ (i.e.\ $\gamma_1 = \ldots = \gamma_{d+1} = \tfrac d2 -1$).
This yields
\begin{multline*}
\E\Vol_d^{\mathrm{hyp}}([X_1,\ldots,X_{d+1}])
=
\frac{4 (-1)^{\frac{d-1}{2}}}{\pi^{3/2} \left( \frac{d-1}2\right)!} \left(\frac{\Gamma(\frac d2)}{\Gamma(\frac{d-1}{2})}\right)^{d+1}
\frac{\Gamma\!\left(\frac{d^2-d}{2}\right)} {\Gamma\!\left(\frac{d^2-d-1}{2}\right)}
a'_{d+1}(d^2-d-1;d-2).
\end{multline*}

Next, we apply Lemma~\ref{lemma:a_prime_repeated_odd_parameters} with
$
m=\frac{d-1}{2}
$
and
$
q=\frac{d+1}{2}
$
(since
$
2q(2m-1)+1
=
d^2-d-1
$),
in order to further obtain
\begin{align*}
&\E \Vol_d^{\mathrm{hyp}}([X_1,\ldots,X_{d+1}])
=
\frac{4 (-1)^{\frac{d-1}{2}}}{\pi^{3/2} \left( \frac{d-1}2\right)!} \left(\frac{\Gamma(\frac d2)}{\Gamma(\frac{d-1}{2})}\right)^{d+1}
\frac{\Gamma\!\left(\frac{d^2-d}{2}\right)} {\Gamma\!\left(\frac{d^2-d-1}{2}\right)}
\\
& \qquad\qquad\qquad
\times B\left(\frac{d-1}{2},\frac{d-1}{2}\right)^{d+1} \frac{(-1)^{\frac{d+1}{2}+1}\pi}{2} \int_0^1 t^{\frac{d-1}{2}} \left(\sum_{r=0}^{\frac{d-3}{2}}(-1)^r \binom{d-2}{\frac{d-3}{2}-r}t^r \right)^{d+1} \dd t.
\end{align*}
The gamma terms can be simplified using the formulas
$$
\frac{\Gamma\left(\frac{d^2-d}{2}\right)}{\Gamma\left(\frac{d^2-d-1}{2}\right)}
=
\frac{2^{d^2-d-2}}{\sqrt{\pi}\,\binom{d^2-d-2}{\frac{d^2-d-2}{2}}}
\quad \text{ and } \quad
\frac{\Gamma\left(\frac{d}{2}\right)}{\sqrt{\pi}\,\Gamma\left(\frac{d-1}{2}\right)} = \frac{2^{2-d}}{B\left(\frac{d-1}{2}, \frac{d-1}{2}\right)},
$$
which both follow from the Legendre duplication formula. This gives
\begin{align*}
\E \Vol_d^{\mathrm{hyp}}([X_1,\ldots,X_{d+1}])
=
\frac{2\pi^{\frac{d-1}{2}}}{\left(\frac{d-1}{2}\right)!\binom{d^2-d-2}{\frac{d^2-d-2}{2}}} \int_0^1 t^{\frac{d-1}{2}}
\left(\sum_{r=0}^{\frac{d-3}{2}} (-1)^r \binom{d-2}{\frac{d-3}{2}-r} \, t^r\right)^{d+1} \, \dd t,
\end{align*}
which is precisely the stated formula~\eqref{eq:exp_volume_hyp_ideal_simplex_odd_dimension}.
\end{proof}

\subsection{Random polygons with \texorpdfstring{$\beta=0$}{beta = 0} in dimension \texorpdfstring{$2$}{2}}

Finally, we determine the expected hyperbolic area of a random polygon generated by $n$ independent random points uniformly distributed in the unit disk $\mathbb{B}^2$, where the uniform distribution is understood with respect to the Euclidean area measure.

\begin{proposition}[Expected hyperbolic area for $\beta=0$ in dimension $2$]\label{prop:hyp_vol_for_beta_0_dim_2}
Let $n \ge 3$, and let $X_1,\ldots,X_n$ be independent random points uniformly distributed in the unit disk $\mathbb{B}^2$. Viewing this disk as the Klein model of the hyperbolic plane, the expected hyperbolic area of the random polygon $\sP \coloneqq [X_1,\ldots,X_n]$ is given by
\begin{align}
\E \Vol_2^{\mathrm{hyp}}(\sP)
&=
-2\pi + \frac{2^{n-1}n}{\pi^{n-2}}\, b_{n-1}(1;2)
\label{eq:hyp_vol_for_beta_0_dim_2_eq1}\\
&=
-2\pi + \frac{2^{n-1}n}{\pi^{n-2}}
\int_{-1}^{1}
\left(
\int_{-1}^t \sqrt{1-s^2}\,\dd s
\right)^{n-1}
\dd t.
\label{eq:hyp_vol_for_beta_0_dim_2_eq2}
\end{align}
\end{proposition}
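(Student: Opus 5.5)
The plan is to specialise Theorem~\ref{theo:expected_hyp_volume_beta_polytopes_even_d} to $d=2$ and $\beta_1=\cdots=\beta_n=0$, using the second representation~\eqref{eq:beta_integral_for_hyp_vol_2_line_2_hyperbolic}. Then $\gamma_i=\beta_i+\frac d2=1$ for all $i$, and the sum runs over $|I|\in\{d-1,d-3,\ldots\}=\{1\}$, i.e.\ over the $n$ singletons $I=\{i\}$. For each singleton $\sum_{i\in I}2\gamma_i=2>0$, so the $b$-argument equals $1>-1$. Hence no indeterminate expression $0\cdot b(-1;\cdot)$ arises, and Lemma~\ref{lemma:limit_if_b_has_pole} is not needed.

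Next I evaluate the constants.
\begin{itemize}
\item The prefactor is $\frac{(-2\pi)^{1}}{\pi\cdot 1!!}=-2$.
\item The product factor is $\bigl(\frac{\Gamma(2)}{\sqrt\pi\,\Gamma(3/2)}\bigr)^n=(2/\pi)^n$.
\item Each summand equals $a(3;2)\cdot 2\cdot b(1;2,\ldots,2)$, with $n-1$ entries equal to $2$, that is, $2\,a(3;2)\,b_{n-1}(1;2)$.
\item By the $d=1$ case of~\eqref{eq:a_for_d_0_and_1}, $a(3;2)=\frac{\pi\Gamma(3/2)^2}{2\Gamma(2)^2}=\frac{\pi^2}{8}$.
\end{itemize}
Since all $n$ summands coincide, the bracket is $\pi-(2/\pi)^n\cdot n\cdot\frac{\pi^2}{4}\,b_{n-1}(1;2)$. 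Multiplying by $-2$ gives $-2\pi+\frac{2^{n-1}n}{\pi^{n-2}}\,b_{n-1}(1;2)$, which is~\eqref{eq:hyp_vol_for_beta_0_dim_2_eq1}.

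For~\eqref{eq:hyp_vol_for_beta_0_dim_2_eq2}, I insert $\alpha=1$ and $\alpha_j=2$ into the representation~\eqref{eq:b:formula_1}. The outer weight becomes $(1-t^2)^0=1$, and each inner factor is $\frac{1}{2c_{1/2}}+\int_0^t\sqrt{1-s^2}\,\dd s$. Since $c_{1/2}=\frac{\Gamma(2)}{\sqrt\pi\,\Gamma(3/2)}=\frac2\pi$, the constant is $\frac{\pi}{4}=\int_{-1}^0\sqrt{1-s^2}\,\dd s$. The inner factor therefore equals $\int_{-1}^t\sqrt{1-s^2}\,\dd s$, as claimed.

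There is no real obstacle here. The only points needing care are checking that the hypotheses of Theorem~\ref{theo:expected_hyp_volume_beta_polytopes_even_d} and Definition~\ref{def:a_and_b_quantities} hold (they do, since $\gamma_i=1\ge0$ and $3>2$) and keeping the gamma-function constants straight.
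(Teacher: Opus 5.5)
Your proposal is correct and follows the paper's proof: you specialise \eqref{eq:beta_integral_for_hyp_vol_2_line_2_hyperbolic} to $d=2$ and $\gamma_i=1$, note that only the $n$ singletons contribute, evaluate $a(3;2)=\pi^2/8$ from \eqref{eq:a_for_d_0_and_1}, and obtain the second form from \eqref{eq:b:formula_1}. Your extra checks are also right, namely that $1/(2c_{1/2})=\pi/4=\int_{-1}^0\sqrt{1-s^2}\,\dd s$ and that no $0\cdot b(-1;\cdot)$ term arises; they fill in details the paper leaves implicit.
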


\begin{proof}
Since the uniform distribution on $\mathbb{B}^2$ is the beta distribution with parameter $0$, we have $X_i \sim f_{2,0}$ for all $i=1,\ldots,n$. Thus we may apply Theorem~\ref{theo:expected_hyp_volume_beta_polytopes_even_d}, more precisely~\eqref{eq:beta_integral_for_hyp_vol_2_line_2_hyperbolic}, with $d=2$ and $\beta_1=\cdots=\beta_n=0$. In this case, $\gamma_i=1$ for all $i$, and therefore
\begin{align*}
\E \Vol_2^{\mathrm{hyp}}([X_1,\ldots,X_n])
&=
-2\left(
\pi
-
\left(\frac{2}{\pi}\right)^n
\, n \, a(3;2)\cdot 2 \cdot b_{n-1}(1;2)
\right).
\end{align*}
Here we used that, for $d=2$, the only admissible subsets in~\eqref{eq:beta_integral_for_hyp_vol_2_line_2_hyperbolic} are the singletons, and hence there are exactly $n$ summands.
Next, by~\eqref{eq:a_for_d_0_and_1},
$
a(3;2)=\frac{\pi^2}{8}.
$
Substituting this into the previous identity yields~\eqref{eq:hyp_vol_for_beta_0_dim_2_eq1}. Equation~\eqref{eq:hyp_vol_for_beta_0_dim_2_eq2}  follows from~\eqref{eq:b:formula_1}.
\end{proof}

\begin{example}[Small values of $n$]
For small values of $n$, the expected hyperbolic area $\E \Vol_2^{\mathrm{hyp}}(\sP)$ is given by
\begin{align*}
&\pi-\tfrac{128}{15\pi}
\quad \text{for } n=3,
\qquad
&&2\pi-\tfrac{256}{15\pi}
\quad \text{for } n=4,
\\
&3\pi-\tfrac{128}{3\pi}+\tfrac{5537792}{33075\pi^3}
\quad \text{for } n=5,
\qquad
&&4\pi-\tfrac{256}{3\pi}+\tfrac{5537792}{11025\pi^3}
\quad \text{for } n=6.
\end{align*}
To compute these values, one may use the representation
\[
b_d(1;2)
=
\int_{-\pi/2}^{\pi/2} \cos x \, \bigl(F_2(x)\bigr)^d \,\dd x,
\qquad
F_2(x)
=
\int_{-\pi/2}^x \cos^2(y)\,\dd y
=
\frac{\pi}{4}+\frac{x}{2}+\frac12 \sin x \cos x.
\]
It follows that $b_d(1;2)$ can be expressed as a finite linear combination of integrals of the form $\int_{-\pi/2}^{\pi/2} x^a \sin^b x \cos^c x \,\dd x$, which are standard. 
\end{example}



\section*{Acknowledgement}
Supported by the German Research Foundation under Germany's Excellence Strategy  EXC 2044 / 2 -- 390685587, Mathematics M\"unster: Dynamics - Geometry - Structure and by the DFG priority program SPP 2265 Random Geometric Systems.  This paper benefited from many helpful and illuminating discussions with ChatGPT, which sharpened both the perspective and presentation.

\addcontentsline{toc}{section}{References}
\bibliography{expected_hyperbolic_volume_beta_polytopes_bib.bib}
\bibliographystyle{plainnat}

\vspace{1cm}

\footnotesize

\textsc{Zakhar Kabluchko: Institut f\"ur Mathematische Stochastik,
	Universit\"at M\"unster,
	Orl\'eans-Ring 10,
	48149 M\"unster, Germany}\\
\textit{E-mail}: \texttt{zakhar.kabluchko@uni-muenster.de}\\

\textsc{Philipp Schange: Institut f\"ur Mathematische Stochastik, Universit\"at M\"unster,
	Orl\'eans-Ring 10,
	48149 M\"unster, Germany}\\
\textit{E-mail}: \texttt{philipp.schange@uni-muenster.de}

\end{document}